\newtheorem{theorem}{Theorem}
\newtheorem{lemma}[theorem]{Lemma}
\newtheorem{corollary}[theorem]{Corollary}
\newtheorem{proposition}[theorem]{Proposition}
\font\sc=rsfs10
\newcommand{\cC}{\sc\mbox{C}\hspace{1.0pt}}
\begin{document}
\title[Bimodules over $A_n$ quivers]
{Bimodules over uniformly oriented  $A_n$ quivers with radical square zero}

\author{Volodymyr Mazorchuk and Xiaoting Zhang}

\begin{abstract}
We start with observing that the only connected finite dimensional algebras
with finitely many isomorphism classes of indecomposable bimodules
are the quotients of the path algebras of uniformly oriented $A_n$-quivers
modulo the radical square zero relations. For such algebras we study
the (finitary) tensor category of bimodules. We describe the cell
structure of this tensor category, determine existing adjunctions
between its $1$-morphisms and find a minimal generating set with respect to
the tensor structure. We also prove that, for the algebras mentioned above,
every simple transitive $2$-representation of the $2$-category of projective
bimodules is equivalent to a cell $2$-representation.
\end{abstract}

\maketitle

\section{Introduction and description of the results}\label{s1}

Finitary $2$-categories were introduced in \cite{MM1} as ``finite dimensional''
counterparts of, in general, ``infinite dimensional'' $2$-categories which
were studied in the categorification literature on the borderline between
algebra and topology during the last twenty years, see \cite{BFK,CR,Kh,KL,Ro,St}.
The series \cite{MM1,MM2,MM3,MM4,MM5,MM6} of papers develops basics of
abstract representation theory for finitary $2$-categories. Classical
examples of finitary $2$-categories are: the $2$-category of Soergel bimodules
over the coinvariant algebra of a finite Weyl group, see \cite[Subsection~7.1]{MM1},
and the $2$-category of projective functors over finite dimensional associative
algebra, see \cite[Subsection~7.3]{MM1}. Further examples of finitary $2$-categories
were constructed and studied in \cite{GM1,GM2,Xa,Zh1,Zh2}, see also the above mentioned series
\cite{MM1}--\cite{MM6}.

In the present paper we consider a new natural class of examples of finitary $2$-categories.
The paper started from the following question:

{\em For which finite-dimensional algebras, the corresponding tensor category of bimodules is finitary?}

Although we suspect that the answer to this question could be known to specialists in
representation type, we did not manage to find any explicit answer in the literature.
In Theorem~\ref{thm1} we show that the only finite dimensional algebras over an algebraically
closed field, for which the number of isomorphism classes of bimodules is finite, are those
algebras whose connected components are radical square zero quotients of uniformly
oriented path algebras of type $A$ Dynkin quivers. This result motivates the rest of the
paper where we take a closer look at the tensor category of bimodules over such algebras.

We start with an attempt to understand the combinatorics of left-, right- and two-sided cells
of this tensor category. These cells are natural generalizations of Green's relations for semigroups
from \cite{Gr} to the setup of tensor categories, see \cite[Section~3]{MM2}. Apart from
$\Bbbk$-split bimodules, that is bimodules of the form $X\otimes_{\Bbbk}Y$, where
$X$ is a left module and $Y$ is a right module, see \cite{MMZ}, the remaining bimodules
can be split into four families, which we call $W$, $M$, $N$ and $S$, motivated by the
shape of the diagram of a bimodule. All $\Bbbk$-split bimodules always form the maximum
two-sided cell which is easy to understand.

To describe the remaining structure, we introduce several combinatorial invariants of bimodules,
called {\em left support} and {\em right support} and also the number of {\em valleys}  in
the diagram of a bimodule. We show that these invariants, in combination with bimodule types,
classify left, right and two-sided cells. For example, two-sided cells are classified,
in the case of non $\Bbbk$-split bimodules, by the number of valleys in bimodule diagrams.
This result is a first step in understanding combinatorial structure for bimodule categories
over arbitrary finite dimensional algebras, the latter question forming the core of our motivation.
We also give an explicit description for all adjoint pairs of functors formed by our bimodules.
This description covers only some bimodules as, in general, the right adjoint of
tensoring with a bimodule is not exact and hence is not isomorphic to tensoring with some
(possibly different) bimodule.

Furthermore, we find a minimal generating set for our tensor category, with respect to the
tensor structure. It consists of the identity bimodule and three additional bimodules in
the two-sided cell closest to the one formed by the identity bimodule, with respect to the
two-sided order. To prove the statement, we give explicit formulae for tensor products 
of each of these three bimodules  with all other indecomposable bimodules.

Finally, we study simple transitive   of the $2$-category of projective
bimodules over our algebras. Classification of such $2$-representations is a natural 
problem which was considered, for
various classes of $2$-categories in \cite{MM5,MM6,Zh1,Zi,MZ,MaMa,KMMZ,MT,MMMT,MMZ}.
It also has interesting applications, see \cite{KM1}. A natural class of simple transitive
$2$-representations is given by the so-called {\em cell $2$-representations} constructed in \cite{MM1,MM2}.
For the $2$-category of projective bimodules over a finite dimensional algebra $A$, it is known that
cell $2$-representations exhaust all simple transitive $2$-representations if $A$ is self-injective
(see \cite{MM5,MM6}), if $A=\Bbbk[x,y]/(x^2,y^2,xy)$ (see \cite{MMZ}) and if
$A$ is the radical square zero quotient of the path algebra of a uniformly oriented quiver of type
$A_2$ or $A_3$ (see \cite{MZ}). In the present paper we extend this result to all directed
algebras admitting a non-zero projective-injective module, see Theorem~\ref{thmmain3}.
We recover, with a much shorter and much more elegant proof, the main results of \cite{MZ},
however, our approach is strongly inspired by \cite[Section~3]{MZ}. Our approach to the proof of
Theorem~\ref{thmmain3} contains some new general ideas which could help to attack similar problems
for other finitary $2$-categories. We note that there are natural examples of $2$-categories
which have simple transitive $2$-representations that are not equivalent to cell $2$-representations,
see \cite{MaMa,KMMZ,MT,MMMT}.

The paper is organized as follows: Section~\ref{s2} contains the material related
to the formulation and proof of Theorem~\ref{thm1}. Section~\ref{s3} studies combinatorics
of bimodules. The main results of this section which provide a combinatorial description
of the cell structure are collected in Subsection~\ref{s3.27}. In Theorem~\ref{thmw-1} of
Section~\ref{snew}, we describe a minimal generating set of our tensor category with
respect to the tensor structure. Finally, Section~\ref{s4}
contains the material related to classification of simple transitive $2$-representations.
\vspace{5mm}

\textbf{Acknowledgements:} This research was partially supported by
the Swedish Research Council, Knut and Alice Wallenberg Stiftelse and
G{\"o}ran Gustafsson Stiftelse.

\section{Characterization via representation type}\label{s2}

\subsection{Main object of study}\label{s2.1}

Throughout the paper we fix an algebraically closed field $\Bbbk$. For $n\in\{1,2,3,\dots\}$,
we denote by $A_n$ the quotient of the path algebra of the quiver
\begin{equation}\label{eq1}
\xymatrix{
\mathtt{1}\ar[r]^{\alpha_1}&\mathtt{2}\ar[r]^{\alpha_2}&
\mathtt{3}\ar[r]^{\alpha_3}&\dots\ar[r]^{\alpha_{n-1}}&\mathtt{n}
}
\end{equation}
modulo the relations that the product of any two arrows is zero.  In particular, we have
$A_1\cong \Bbbk$, $A_2$ is isomorphic to the algebra of upper triangular $2\times 2$ matrices
with coefficients in $\Bbbk$ and $\mathrm{Rad}(A_n)^2=0$, for any $n$. If $n$ is fixed or
clear from the context, we will simply write $A$ for $A_n$.

We denote by
\begin{itemize}
\item $A$-mod the category of finite dimensional left $A$-modules;
\item mod-$A$ the category of finite dimensional right $A$-modules;
\item $A$-mod-$A$ the category of finite dimensional $A$-$A$-bimodules.
\end{itemize}

For $i=1,2,\dots,n$, we denote by $e_i$ the trivial path at the vertex $\mathtt{i}$.
Thus we have a primitive decomposition $1=e_1+e_2+\dots+e_n$ of the identity $1\in A$.
Then $P_i=Ae_i$ is an indecomposable projective in $A$-mod and we denote by $L_i$
the simple top of $P_i$. Further, we denote by $I_i$ the indecomposable injective
envelope of $L_i$. Note that $P_i$ has dimension $2$, for $i=1,2,\dots,n-1$,
and $P_n=L_n$. Similarly, $I_i$ has dimension $2$, for $i=2,3,\dots,n$,
and $I_1=L_1$. Moreover, $P_i\cong I_{i+1}$, for $i=1,2,\dots,n-1$.

\subsection{Bimodule representation type}\label{s2.2}

The main result of this subsection is the following statement. We suspect that this claim
should be known to experts, but we failed to find any explicit reference in the literature.

\begin{theorem}\label{thm1}
Let $B$ be a finite dimensional associative $\Bbbk$-algebra. Then the following conditions
are equivalent:
\begin{enumerate}[$($a$)$]
\item\label{thm1.1} The category $B$-mod-$B$ has finitely many isomorphism classes of
indecomposable objects.
\item\label{thm1.2} Each connected component of $B$ is Morita equivalent to some $A_n$.
\end{enumerate}
\end{theorem}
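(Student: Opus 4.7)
The plan is to establish both directions separately. The direction (b) $\Rightarrow$ (a) is essentially an explicit classification, while (a) $\Rightarrow$ (b) proceeds through a chain of reductions ending in a case analysis on the quiver of $B$.

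For (b) $\Rightarrow$ (a), I would first note that Morita equivalence induces an equivalence of bimodule categories preserving the number of isomorphism classes of indecomposables, so one may assume each connected component is literally an $A_{n_i}$. The central idempotents $e_i=1_{A_{n_i}}$ decompose every $B$-bimodule $M$ as $\bigoplus_{i,j} e_i M e_j$, where $e_i M e_j$ is an $A_{n_i}$-$A_{n_j}$-bimodule. Thus it suffices to show that $A_n\otimes_{\Bbbk}A_m^{\mathrm{op}}$ has finite representation type for every $n,m\geq 1$. The quiver of this tensor algebra is the $n\times m$ grid with length-two zero relations on each row and column (inherited from the factors) and commutativity relations on each unit square; the finite classification of its indecomposable representations is precisely what the combinatorial analysis of Section~\ref{s3} carries out for $n=m$, and the off-diagonal case is an immediate variant.

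For (a) $\Rightarrow$ (b), I would begin with two structural reductions: Morita invariance allows me to assume $B$ is basic, and the decomposition of bimodules via the central idempotents of a connected-component decomposition $B=B_1\times\cdots\times B_k$ (each $B_i$-bimodule occurring as a summand of a $B$-bimodule) lets me assume $B$ is connected. Two general observations now constrain $B$ severely. First, for indecomposable $L\in B\text{-mod}$ and $R\in\text{mod-}B$, the bimodule $L\otimes_{\Bbbk}R$ is indecomposable and non-isomorphic pairs yield non-isomorphic bimodules, so $B$ itself must have finite representation type. Second, restriction along the canonical surjection $B\to B/\mathrm{rad}^2(B)$ realizes $(B/\mathrm{rad}^2(B))\text{-mod-}(B/\mathrm{rad}^2(B))$ as a full subcategory of $B\text{-mod-}B$, so the radical-square-zero quotient of $B$ also has finite bimodule type.

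The main work is then a case analysis on the quiver $Q$ of $B/\mathrm{rad}^2(B)$. By the classical separated-quiver criterion for radical-square-zero algebras of finite representation type, the bipartite double of $Q$ must be a disjoint union of Dynkin diagrams, leaving only a short list of candidate shapes. I would rule out every shape other than a uniformly oriented $\vec{A}_n$ by exhibiting, in each forbidden case, a one-parameter family of pairwise non-isomorphic indecomposable bimodules: (i) if $Q$ has a vertex of in-degree or out-degree at least two, then the quiver of $B\otimes B^{\mathrm{op}}$ carries, near the vertex $(i,i)$, a Kronecker-type subconfiguration compatible with the commutativity and length-two relations, yielding a $\mathbb{P}^1$-family; (ii) if $Q$ is (or contains) an oriented cycle, including the local case $B=\Bbbk[x]/(x^k)$ with $k\geq 2$, then $B\otimes B^{\mathrm{op}}$ admits the tame algebra $\Bbbk[x,y]/(x^2,y^2)$ as a subquotient, which already supports an infinite family. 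What survives is precisely a Nakayama non-cyclic $Q$, that is, a uniformly oriented $\vec{A}_n$. A final step is to check that $\mathrm{rad}^2(B)=0$ automatically: a nontrivial lift would give a quotient of $B$ isomorphic to the hereditary path algebra of a linearly oriented $A_m$ with $m\geq 3$, whose tensor square over $\Bbbk$ is directly verified to have infinite representation type.

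The main obstacle is the construction of the $\mathbb{P}^1$-families in case (i): although the overall strategy is clear, each forbidden quiver shape requires writing an explicit bimodule as a representation of $B\otimes B^{\mathrm{op}}$ and verifying indecomposability together with non-isomorphism across the family. The rest consists of routine checks.
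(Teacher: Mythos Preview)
Your overall outline matches the paper's skeleton, but the execution differs in two places worth comparing.

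For (b)$\Rightarrow$(a), your reference to Section~\ref{s3} is misplaced: that section lists and studies the indecomposable bimodules but does not independently prove finiteness. The paper's actual argument (Subsection~\ref{s2.4}) observes that $A_n\otimes_\Bbbk A_n^{\mathrm{op}}$ is a \emph{special biserial algebra} and invokes the string/band classification of \cite{BR,WW}; one then checks directly from the shape of the relations that the maximal strings avoiding zero relations are finite in number and never close up into bands. The off-diagonal case $A_n\otimes_\Bbbk A_m^{\mathrm{op}}$ is handled by the embedding coming from $A_k/(e_k)\cong A_{k-1}$. Without naming the special biserial machinery (or an equivalent explicit enumeration), your sketch for this direction is incomplete.

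For (a)$\Rightarrow$(b), your plan to build explicit $\mathbb{P}^1$-families is workable but, as you yourself flag, laborious. The paper sidesteps your ``main obstacle'' entirely: in each forbidden configuration it passes to a suitable quotient of $B\otimes_\Bbbk B^{\mathrm{op}}$ and exhibits a centralizer subalgebra $e(\text{--})e$ isomorphic to the path algebra of an affine Dynkin quiver---$\tilde{D}_4$ for the in/out-degree $\geq 2$ case and for the $\mathrm{rad}^2\neq 0$ case, and $\tilde{A}_4$ for the $2$-cycle case. Since finite representation type passes to centralizer subalgebras, this gives an immediate contradiction with no explicit module construction. (Loops are handled separately via the quotient $\Bbbk[x,y]/(x^2,y^2,xy)$ with its obvious one-parameter family of $2$-dimensional modules, and multiple arrows via a Kronecker quotient.) The paper also does not invoke the separated-quiver criterion; it argues directly on the Gabriel quiver of $B$, eliminating multiple arrows, loops, high-degree vertices, and $2$-cycles one at a time before concluding that $Q$ is a disjoint union of uniformly oriented type~$A$ quivers and that $\mathrm{Rad}(B)^2=0$.
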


As $B$-mod-$B$ is equivalent to $B\otimes_{\Bbbk} B^{\mathrm{op}}$-mod, condition \eqref{thm1.1}
is equivalent to saying that $B\otimes_{\Bbbk} B^{\mathrm{op}}$ is of finite representation type.

\subsection{Proof of the implication \eqref{thm1.1}$\Rightarrow$\eqref{thm1.2}}\label{s2.3}

Let $B$ be a basic finite dimensional associative $\Bbbk$-algebra such that the category $B$-mod-$B$ has
finitely many isomorphism classes of indecomposable objects. Note that this, in particular, implies
that $B$ has finite representation type. Consider the Gabriel quiver $Q=(Q_0,Q_1)$ of $B$,
where $Q_0$ is the set of vertices and $Q_1$ the set of arrows. Then, for any $i,j\in Q_0$, we have at
most one arrow from $i$ to $j$ for otherwise $B$ would surject onto the
Kronecker algebra and thus have infinite representation type.

Next we claim that $Q$ has no loops. Indeed, if $Q$ would have a loop, $B$ would have a quotient
isomorphic to the algebra $D:=\Bbbk(x)/(x^2)$ of dual numbers. However, the algebra
$D\otimes_{\Bbbk}D^{\mathrm{op}}$ has a quotient isomorphic to $\Bbbk(x,y)/(x^2,y^2,xy)$ and the
latter has infinite representation type since we have an infinite family of pairwise non-isomorphic
indecomposable $2$-dimensional modules of this algebra on which $x$ and $y$ act via
\begin{displaymath}
\left(\begin{array}{cc}0&1\\0&0\end{array}\right)\quad\text{ and }\quad
\left(\begin{array}{cc}0&\lambda\\0&0\end{array}\right), \quad\text{ where }\lambda\in\Bbbk,
\end{displaymath}
respectively (note that $\Bbbk$, being algebraically closed, is infinite). This is a contradiction.

Next we claim that each vertex of $Q$ has indegree at most $1$ and outdegree at most $1$.
We prove the claim for indegrees and, for outdegrees, the proof is similar. If $Q$ has a vertex with
indegree at least two, then, taking the above into account, $Q$ has a subgraph of the form:
\begin{equation}\label{eq2}
\xymatrix{
i\ar[r]&j&k\ar[l]
}
\end{equation}
Then $B$ has a quotient isomorphic to the path algebra $C$ of \eqref{eq2}. We claim that
$C\otimes_{\Bbbk}C^{\mathrm{op}}$ has infinite representation type thus giving us a contradiction.
Indeed, $C\otimes_{\Bbbk}C^{\mathrm{op}}$ is isomorphic to the quotient of the path algebra of
the solid part of the following quiver:
\begin{displaymath}
\xymatrix{
\bullet\ar[r]&\bullet&\bullet\ar[l]\\
\bullet\ar[r]\ar[u]\ar[d]\ar@{--}[dr]\ar@{--}[ur]&\bullet\ar[u]\ar[d]&
\bullet\ar[l]\ar[u]\ar[d]\ar@{--}[dl]\ar@{--}[ul]\\
\bullet\ar[r]&\bullet&\bullet\ar[l]
}
\end{displaymath}
modulo the commutativity relations indicated by dotted lines. In the middle of this picture we
see an orientation of the affine Dynkin diagram of type $\tilde{D}_4$. The corresponding
centralizer subalgebra thus has infinite representation type. It follows that
$C\otimes_{\Bbbk}C^{\mathrm{op}}$ has infinite representation type.

Next we claim that $Q$ does not have any components of the form
$\xymatrix{\bullet\ar@/^/[r]&\bullet\ar@/^/[l]}$. If the latter were the case, then
$B\otimes_{\Bbbk}B^{\mathrm{op}}$ would have a quotient isomorphic to the path algebra of
\begin{displaymath}
\xymatrix{
\bullet\ar@/^/[r]\ar@/^/[d]&\bullet\ar@/^/[l]\ar@/^/[d]\\
\bullet\ar@/^/[r]\ar@/^/[u]&\bullet\ar@/^/[l]\ar@/^/[u]
}
\end{displaymath}
modulo the relations that all squares commute. The latter has a quotient isomorphic to the
path algebra corresponding to the following orientation
\begin{displaymath}
\xymatrix{
\bullet\ar@/^/[r]\ar@/^/[d]&\bullet\\
\bullet&\bullet\ar@/^/[l]\ar@/^/[u]
}
\end{displaymath}
of an affine Dynkin quiver of type $\tilde{A}_4$ and thus has infinite representation type,
a contradiction.

The above shows that $Q$ is a disjoint union of graphs of the form \eqref{eq1}. We now only need
to show that $\mathrm{Rad}(B)^2=0$. If $\mathrm{Rad}(B)^2\neq 0$, then
$B$ has a quotient which is isomorphic to the path
algebra $F$ of \eqref{eq1}, for $n=3$. Then $F\otimes_{\Bbbk}F^{\mathrm{op}}$ is isomorphic to
the quotient of the path algebra of the solid part of the following quiver:
\begin{displaymath}
\xymatrix{
\bullet\ar[r]\ar[d]\ar@{--}[dr]&\bullet\ar[r]\ar[d]\ar@{--}[dr]&\bullet\ar[d]\\
\bullet\ar[r]\ar[d]\ar[d]\ar@{--}[dr]&\bullet\ar[r]\ar[d]\ar@{--}[dr]&
\bullet\ar[d]\\
\bullet\ar[r]&\bullet\ar[r]&\bullet
}
\end{displaymath}
modulo the commutativity relations indicated by dotted lines.
Similarly to the previous paragraph, this algebra has a centralizer subalgebra which is the path
algebra of a type $\tilde{D}_4$ quiver. Therefore it has infinite representation type.

\subsection{Proof of the implication \eqref{thm1.2}$\Rightarrow$\eqref{thm1.1}}\label{s2.4}

Note that $A_n/(e_n)\cong A_{n-1}$, for any $n$. Therefore, for all $m,n$, there is a full
embedding of the category of $A_n$-$A_m$-bimodules into the category of $A_k$-$A_k$-bimodules,
where $k=\max(m,n)$. Using additivity and the fact that Morita equivalence, by definition,
does not affect representation type, we obtain that it is sufficient to prove that
the category $A_n$-mod-$A_n$ has finitely many isomorphism classes of indecomposable objects,
for every $n$.

The algebra $A_n\otimes_{\Bbbk} A_n^{\mathrm{op}}$ is the quotient of the path algebra of the
following quiver:
\begin{equation}\label{eq0}
\xymatrix{
\mathtt{1\vert1}\ar[d]&\mathtt{1\vert2}\ar[l]\ar[d]&
\mathtt{1\vert3}\ar[l]\ar[d]&\dots\ar[l]\ar[d]&\mathtt{1\vert n}\ar[d]\ar[l]\\
\mathtt{2\vert1}\ar[d]&\mathtt{2\vert2}\ar[l]\ar[d]&
\mathtt{2\vert3}\ar[l]\ar[d]&\dots\ar[l]\ar[d]&\mathtt{2\vert n}\ar[d]\ar[l]\\
\mathtt{3\vert1}\ar[d]&\mathtt{3\vert2}\ar[l]\ar[d]&
\mathtt{3\vert3}\ar[l]\ar[d]&\dots\ar[l]\ar[d]&\mathtt{3\vert n}\ar[d]\ar[l]\\
\vdots\ar[d]&\vdots\ar[l]\ar[d]&
\vdots\ar[l]\ar[d]&\ddots\ar[l]\ar[d]&\vdots\ar[d]\ar[l]\\
\mathtt{n\vert1}&\mathtt{n\vert2}\ar[l]&
\mathtt{n\vert3}\ar[l]&\dots\ar[l]&\mathtt{n\vert n}\ar[l]
}
\end{equation}
modulo the relations that all squares commute and the product of any two horizontal or any two
vertical arrows is zero.

The algebra $A_n\otimes_{\Bbbk} A_n^{\mathrm{op}}$ is thus a special biserial algebra in the sense of \cite{BR,WW}.
According to these two references, each indecomposable module over a special biserial algebra is
either a string modules or a band module or a non-uniserial projective-injective module. In our case,
there are certainly finitely many indecomposable non-uniserial projective-injective modules
(they correspond to commutative squares in our quiver).

We claim that, in the case of $A_n\otimes_{\Bbbk} A_n^{\mathrm{op}}$, there are only finitely many string modules (up to isomorphism) and that there are no band modules.
This follows from the form of the relations.
Indeed, the maximal (with respect to inclusion) strings avoiding zero relations are:
\begin{equation}\label{eq3}
\xymatrix@C=1.6em@R=1.6em{
\mathtt{1\vert n\text{-}1}&\ar[d]\mathtt{1\vert n}\ar[l]\\
&\mathtt{2\vert n}
}\qquad
\xymatrix@C=1.6em@R=1.6em{
\mathtt{1\vert n\text{-}2}&\mathtt{1\vert n\text{-}1}\ar[l]\ar[d]\\
&\mathtt{2\vert n\text{-}1}&\mathtt{2\vert n}\ar[l]\ar[d]\\
&&\mathtt{3\vert n}
}\qquad
\xymatrix@C=1.6em@R=1.6em{
\mathtt{1\vert n\text{-}3}&\mathtt{1\vert n\text{-}2}\ar[l]\ar[d]\\
&\mathtt{2\vert n\text{-}2}&\mathtt{2\vert n\text{-}1}\ar[l]\ar[d]\\
&&\mathtt{3\vert n\text{-}1}&\mathtt{3\vert n}\ar[l]\ar[d]\\
&&&\mathtt{4\vert n}
}
\end{equation}
and so on (in total, there are $2n-2$ such maximal strings). We see that edges of these strings never
intersect
and that the strings never close into bands (corresponding to primitive cyclic words in some references).
Consequently, there are
no band modules and
only finitely many string modules.
The claim follows.

An exact enumeration of isomorphism classes of indecomposable objects in the category
$A_n$-mod-$A_n$ is given in the next subsection.

\subsection{Enumeration of indecomposable $A_n$-$A_n$-bimodules}\label{s2.5}

\begin{proposition}\label{prop2}
For $n=1,2,\dots$, the category $A_n$-mod-$A_n$ contains exactly
\begin{displaymath}
\frac{4n^3+3n^2-7n+3}{3}
\end{displaymath}
isomorphism classes of indecomposable objects.
\end{proposition}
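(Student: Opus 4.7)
The plan is to combine the Butler--Ringel classification of indecomposables over special biserial algebras with an explicit enumeration of maximal strings. By the arguments of Subsection~\ref{s2.4}, every indecomposable $A_n\otimes_{\Bbbk}A_n^{\mathrm{op}}$-module is either a string module, a band module, or a non-uniserial projective-injective module, and the band modules do not occur. The count therefore splits into three contributions: the simples (length-zero strings, one per vertex of \eqref{eq0}), the non-uniserial projective-injectives, and the non-simple string modules.

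The first two contributions are immediate. The vertices of \eqref{eq0} give $n^{2}$ simples. The non-uniserial projective-injectives correspond bijectively to the commutative squares of \eqref{eq0}, parametrized by $(i,j)$ with $1\le i\le n-1$ and $2\le j\le n$, contributing $(n-1)^{2}$ modules.

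The core of the proof is the classification of maximal strings. The key observation is that the commutativity relations in $A_n\otimes_{\Bbbk}A_n^{\mathrm{op}}$ force every interior vertex of a valid walk to be either a \emph{peak} (both incident walk-edges are arrows originating at that vertex) or a \emph{valley} (both incident walk-edges are arrows ending at that vertex); indeed, a pass-through vertex would yield a walk whose candidate string module violates one of the commutativity relations. This constraint forces the peaks of every walk to lie on a single anti-diagonal $i-j=c$ with the valleys on $i-j=c+1$. A short case analysis of the maximality condition at each endpoint---depending on whether the adjacent walk-edge is horizontal or vertical and incoming or outgoing, and whether the endpoint lies on a boundary row or column of \eqref{eq0}---shows that maximal walks come in two families, each indexed by $k=1,\dots,n-1$: the peak-rich family displayed in \eqref{eq3}, whose $k$-th member has $k$ peaks, $k-1$ valleys, $2k$ edges, and endpoints $\mathtt{1\vert n\text{-}k}$ and $\mathtt{k\text{+}1\vert n}$; and a valley-rich family, whose $k$-th member has $k$ valleys, $k-1$ peaks, $2k$ edges, and endpoints $\mathtt{n\text{-}k\vert 1}$ and $\mathtt{n\vert k\text{+}1}$. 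Together these yield $2(n-1)$ maximal strings.

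Each maximal string with $2k$ edges has $\binom{2k+1}{2}$ contiguous sub-walks of length at least one, each producing a non-simple string module. Since $\sum_{k=1}^{n-1}2\cdot 2k=2n(n-1)$ coincides with the total arrow count of \eqref{eq0}, every arrow belongs to a unique maximal string, and hence sub-walks of distinct maximal strings give pairwise non-isomorphic modules. Summing all contributions yields
\begin{displaymath}
n^{2}+(n-1)^{2}+2\sum_{k=1}^{n-1}\binom{2k+1}{2}=n^{2}+(n-1)^{2}+\frac{n(n-1)(4n+1)}{3}=\frac{4n^{3}+3n^{2}-7n+3}{3},
\end{displaymath}
using $\sum_{k=1}^{n-1}k(2k+1)=n(n-1)(4n+1)/6$ and a brief simplification. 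The main obstacle is the classification of maximal strings: both the argument that commutativity rules out pass-through vertices, and the boundary analysis confirming that exactly $2(n-1)$ maximal walks arise with no additional ones concealed in the grid.
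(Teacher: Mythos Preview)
Your proof is correct and follows essentially the same approach as the paper's: both count the $(n-1)^2$ non-uniserial projective-injectives and then enumerate substrings of the $2(n-1)$ maximal strings, which come in pairs of lengths $3,5,\dots,2n-1$. The only organizational difference is that you separate out the $n^2$ simples and use an edge-count to show that non-simple substrings of distinct maximal strings never coincide, whereas the paper includes simples among the substrings and then subtracts the $n^2-2$ vertices that lie on two maximal strings; both bookkeeping schemes yield the same total.
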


\begin{proof}
We have $(n-1)^2$ indecomposable projective-injective objects in $A_n$-mod-$A_n$. From Subsection~\ref{s2.4},
we know that the remaining indecomposable objects correspond to string $A_n\otimes_{\Bbbk} A_n^{\mathrm{op}}$-module.
A string module is uniquely identified by the string the module is supported at, that is by a substring of
one of the maximal string as shown in \eqref{eq3}.

There are two maximal strings with three vertices, two maximal strings with $5$ vertices and so on.
The maximal number of vertices on a maximal string is $2n-1$. A string with $k$ vertices supports
$k(k+1)/2$ string modules (which correspond to connected substrings). Note that simple modules are
supported just by vertices and there are $n^2-2$ vertices, that is all but the left upper and right lower
corners, which belong to two different maximal substrings and hence are counted twice above.
Putting all this together and simplifying gives the necessary formula.
\end{proof}

\section{Combinatorics of $A$-$A$-bimodules}\label{s3}

\subsection{Cells}\label{s3.0}

The main aim of the section is to describe the cell combinatorics of $A$-$A$-bimodules
in the sense of \cite[Section~3]{MM2}. Denote by $\mathcal{S}$ the set of isomorphism classes
of indecomposable $A$-$A$-bimodules. Recall that the {\em left preorder} $\leq_L$ is defined
as follows: for $X,Y\in \mathcal{S}$ we have $X\leq_L Y$ provided that $Y$ is isomorphic to
a direct summand of $Z\otimes_A X$, for some $A$-$A$-bimodule $X$. An equivalence class
with respect to $\leq_L$ is called a {\em left cell} and the corresponding equivalence relation
is denoted by $\sim_L$. The {\em right preorder} $\leq_R$ and the corresponding
{\em right cells} and $\sim_R$ are defined similarly using tensoring over $A$ from the right.
The {\em two-sided preorder} $\leq_J$ and the corresponding
{\em two-sided cells} and $\sim_J$ are defined similarly using tensoring over $A$ from both sides.

\subsection{$\Bbbk$-split $A$-$A$-bimodules}\label{s3.1}

For simplicity, in this section we will denote by $\mathbb{N}_n$ the set
$\{0,1,2,\dots,n-1\}$ and $\mathbb{N}_n^\ast$
the set $\{1,2,\dots,n-1\}$.

An $A$-$A$-bimodule $X$ is called {\em $\Bbbk$-split}, cf. \cite{MMZ},
provided that $X$ is isomorphic to a direct sum of $A$-$A$-bimodules of the form
$M\otimes_{\Bbbk}N$, where $M\in A$-mod and $N\in\mathrm{mod}\text{-}A$.

We will often argue using bimodule action graphs. We will depict the left
action using vertical arrows and the right action using horizontal arrows.
Following the proof of Proposition~\ref{prop2}, we can now list
all non-zero indecomposable $\Bbbk$-split $A$-$A$-bimodules (up to isomorphism)
and describe their action graphs as follows:
\begin{itemize}
\item the projective-injective bimodules $Ae_i\otimes_{\Bbbk}e_{j+1}A$, where  $i,j\in\mathbb{N}_n^\ast$:
\begin{displaymath}
\xymatrix@C=1.6em@R=1.6em{
\mathtt{i\mid j}\ar[d]&\mathtt{i\mid j\text{+}1}\ar[l]\ar[d]\\
\mathtt{i\text{+}1\mid j}&*+[r]{\mathtt{i\text{+}1\mid j\text{+}1};}\ar[l]
}
\end{displaymath}
\item the simple bimodules $\mathtt{i\mid j}$, where $ i,j\in\mathbb{N}_{n+1}^\ast$, that is, string $A\otimes_{\Bbbk} A^{\mathrm{op}}$-modules of dimension $1$ (we will denote the bimodule $\mathtt{i\mid j}$ by $L_{ij}$);
\item string $A\otimes_{\Bbbk} A^{\mathrm{op}}$-modules of dimension $2$:
\begin{displaymath}
\xymatrix@C=1.6em@R=1.6em{
\mathtt{i\mid j,}\ar[d],\\
\mathtt{i\text{+}1\mid j}
}\quad i\in\mathbb{N}_{n}^\ast,j\in \mathbb{N}_{n+1}^\ast;\qquad
\xymatrix@C=1.6em@R=1.6em{
\mathtt{i\mid j}&\mathtt{i\mid j\text{+}1,}\ar[l]
}\quad i\in\mathbb{N}_{n+1}^\ast,j\in \mathbb{N}_{n}^\ast.
\end{displaymath}
\end{itemize}
Note that the projective $A$-$A$-bimodules $Ae_n\otimes_{\Bbbk}e_{j+1}A$, where $j\in \mathbb{N}_{n}^\ast$,
and also $Ae_i\otimes_{\Bbbk}e_1A$, where $i\in \mathbb{N}_{n}^\ast$, belong to the last type of indecomposables
(and they are not injective).

We denote by $\mathcal{J}_{\Bbbk}$ the set of all $\Bbbk$-split elements in $\mathcal{S}$.

\begin{proposition}\label{propn-2}
{\small\hspace{1mm}}

\begin{enumerate}[$($i$)$]
\item\label{propn-2.1} The set  $\mathcal{J}_{\Bbbk}$ is the unique maximal
two-sided cell in $\mathcal{S}$.
\item\label{propn-2.2} For each indecomposable right $A$-module $N$, the set of elements in
$\mathcal{J}_{\Bbbk}$ of the form $X\otimes_{\Bbbk} N$, for some $X\in A$-mod, forms a left cell.
Moreover, each left cell in $\mathcal{J}_{\Bbbk}$ is of such form.
\item\label{propn-2.3} For each indecomposable left $A$-module $K$, the set of elements in
$\mathcal{J}_{\Bbbk}$ of the form $K\otimes_{\Bbbk} Y$, for some $Y\in \mathrm{mod}\text{-}A$, forms a right cell.
Moreover, each right cell in $\mathcal{J}_{\Bbbk}$ is of such form.
\end{enumerate}
\end{proposition}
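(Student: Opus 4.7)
The plan is to handle the three parts in sequence, using a short list of tensor-product identities together with Krull--Schmidt. I would start by recording, for any bimodule $Z$ and any $\Bbbk$-split bimodule $M\otimes_{\Bbbk}N$, the identities
\[
Z\otimes_A(M\otimes_{\Bbbk}N)\cong(Z\otimes_A M)\otimes_{\Bbbk}N,\qquad
(M\otimes_{\Bbbk}N)\otimes_A Z\cong M\otimes_{\Bbbk}(N\otimes_A Z),
\]
which show that tensoring a $\Bbbk$-split bimodule with an arbitrary one on either side stays in $\mathcal{J}_{\Bbbk}$. Combined with the elementary computation $e_jA\otimes_A X\cong e_jX$ and its right counterpart $X\otimes_A Ae_k\cong Xe_k$, this yields the sandwich formula
\[
(Ae_i\otimes_{\Bbbk}e_jA)\otimes_A X\otimes_A(Ae_k\otimes_{\Bbbk}e_lA)\;\cong\;(Ae_i\otimes_{\Bbbk}e_lA)^{\dim_{\Bbbk}(e_jXe_k)}
\]
for any bimodule $X$, using $e_iAe_i=\Bbbk e_i$ in the collapse of the middle factor.

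For part \eqref{propn-2.1}, I would proceed in three steps. First, the two displayed isomorphisms together with Krull--Schmidt show that every indecomposable summand of a two-sided tensor product in which one factor is $\Bbbk$-split is again $\Bbbk$-split, so $\mathcal{J}_{\Bbbk}$ is closed from above under $\leq_J$ and is a union of two-sided cells. Second, to establish maximality in $\mathcal{S}$, for any nonzero indecomposable $X$ I pick $j$ with $e_jX\neq 0$ and use
\[
(Ae_i\otimes_{\Bbbk}e_jA)\otimes_A X\;\cong\;Ae_i\otimes_{\Bbbk}e_jX,
\]
whose nonzero indecomposable summands lie in $\mathcal{J}_{\Bbbk}$, giving $X\leq_J Y$ for some $Y\in\mathcal{J}_{\Bbbk}$. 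Third, I show that $\mathcal{J}_{\Bbbk}$ is a single $\sim_J$-class by relating each $M\otimes_{\Bbbk}N$ to the projective-injective $Ae_i\otimes_{\Bbbk}e_lA$: the direction $M\otimes_{\Bbbk}N\leq_J Ae_i\otimes_{\Bbbk}e_lA$ is immediate from the sandwich formula with $j,k$ chosen so that $e_jM\neq 0$ and $Ne_k\neq 0$, while the reverse direction is the collapse
\[
(M\otimes_{\Bbbk}e_iA)\otimes_A(Ae_i\otimes_{\Bbbk}e_lA)\otimes_A(Ae_l\otimes_{\Bbbk}N)\;\cong\;M\otimes_{\Bbbk}N,
\]
using $e_iAe_i=\Bbbk e_i$ and $e_lAe_l=\Bbbk e_l$. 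Uniqueness of the maximal cell follows at once from the maximality just proved.

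For part \eqref{propn-2.2}, fix an indecomposable right $A$-module $N$. The identity $Z\otimes_A(M\otimes_{\Bbbk}N)\cong(Z\otimes_A M)\otimes_{\Bbbk}N$ together with Krull--Schmidt shows that every indecomposable bimodule summand of the left-hand side has the form $M'\otimes_{\Bbbk}N$ with $M'$ an indecomposable summand of $Z\otimes_A M$, so the right-hand factor is a left-cell invariant and distinct $N$'s yield distinct left cells. Conversely, for two indecomposable left modules $M_1,M_2$, I pick $k$ with $e_kM_1\neq 0$ and set $Z=M_2\otimes_{\Bbbk}e_kA$; then
\[
Z\otimes_A(M_1\otimes_{\Bbbk}N)\;\cong\;(M_2\otimes_{\Bbbk}N)^{\dim_{\Bbbk}(e_kM_1)},
\]
so $M_1\otimes_{\Bbbk}N\leq_L M_2\otimes_{\Bbbk}N$ and the indicated set is a single left cell. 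Since every indecomposable element of $\mathcal{J}_{\Bbbk}$ has the form $M\otimes_{\Bbbk}N$ for unique (up to isomorphism) indecomposable $M$ and $N$, the left cells of $\mathcal{J}_{\Bbbk}$ are exhausted by those already constructed. Part \eqref{propn-2.3} is proved by the mirror argument.

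The main (minor) obstacle is careful bookkeeping: keeping track of which side of each tensor product supplies which $A$-action and verifying at each step that the $\Bbbk$-split output decomposes via Krull--Schmidt so that the desired bimodule appears as an honest direct summand rather than a subquotient. This reduction rests on the fact that $M\otimes_{\Bbbk}N$ is indecomposable as a bimodule whenever $M$ and $N$ are indecomposable one-sided modules, which follows because $\mathrm{End}_{A\text{-}A}(M\otimes_{\Bbbk}N)\cong\mathrm{End}_A(M)\otimes_{\Bbbk}\mathrm{End}_A(N)$ is local over the algebraically closed field $\Bbbk$.
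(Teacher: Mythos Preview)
Your argument is correct and follows essentially the same line as the paper's proof: both establish that $\Bbbk$-split bimodules form a tensor ideal via the identities $Z\otimes_A(M\otimes_{\Bbbk}N)\cong(Z\otimes_A M)\otimes_{\Bbbk}N$ and its mirror, and both deduce the left-cell statement from a computation of the type $(K_1\otimes_{\Bbbk}A)\otimes_A(K_2\otimes_{\Bbbk}X)\cong(K_1\otimes_{\Bbbk}X)^{\oplus\dim K_2}$ (you use $e_kA$ in place of $A$, which is an inessential variant). Your write-up is considerably more detailed than the paper's---in particular you spell out the maximality step, the sandwich formula, and the indecomposability of $M\otimes_{\Bbbk}N$ via locality of $\mathrm{End}_A(M)\otimes_{\Bbbk}\mathrm{End}_A(N)$---but the underlying ideas are the same.
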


\begin{proof}
For $X\in \mathcal{J}_{\Bbbk}$ and $Y$ any $A$-$A$-bimodule, both $X\otimes_A Y$ and $Y\otimes_A X$
are obviously $\Bbbk$-split, so $\Bbbk$-split bimodules form a tensor ideal in $A$-mod-$A$.
For any indecomposable $K_1$ and $K_2$ in $A$-mod and any indecomposable $X\in \mathrm{mod}\text{-}A$,
we have
\begin{displaymath}
\left(K_1\otimes_{\Bbbk}A\right)\otimes_A \left(K_2\otimes_{\Bbbk} X\right)\cong
\left(K_1\otimes_{\Bbbk} X\right)^{\oplus\dim K_2}.
\end{displaymath}
This implies claim~\eqref{propn-2.2} and claim~\eqref{propn-2.3} is proved similarly.
This, combined with the fact that $\Bbbk$-split bimodules form a tensor ideal in $A$-mod-$A$,
also implies claim~\eqref{propn-2.1}, completing the proof.
\end{proof}

\subsection{$A$-$A$-bimodules which are not $\Bbbk$-split }\label{s3.2}

Consider the action graph $\Gamma_M$ for a string $A\otimes_{\Bbbk} A^{\mathrm{op}}$-module $M$.
Then, directly from the construction of string modules, we can make the following easy observations:
\begin{itemize}
\item the indegree of each vertex in $\Gamma_M$ is at most two;
\item the outdegree of each vertex in $\Gamma_M$ is at most two;
\item each vertex of $\Gamma_M$ is either a source or a sink (or both);
\item $M$ is simple if and only if both the indegree and the outdegree of each vertex in $\Gamma_M$ is zero.
\end{itemize}
A vertex of $\Gamma_M$ of indegree exactly two will be called  a {\em valley}.
A typical example of a valley in an action graph is the white vertex of the following graph:
\begin{displaymath}
\xymatrix@C=1.6em@R=1.6em{
\bullet\ar[d]&\\
\circ&\bullet\ar[l]
}
\end{displaymath}
We denote by $\mathbf{v}(M)$ the number of valleys in $\Gamma_M$. Clearly, $0\leq \mathbf{v}(M)\leq n-1$,
moreover, the only $M$ for which $\mathbf{v}(M)= n-1$ is the regular bimodule $M\cong {}_AA_A$.
For any  $A\otimes_{\Bbbk} A^{\mathrm{op}}$-module $N$ which is not projective-injective, define
\begin{displaymath}
\mathbf{v}(N):=\text{max}\{\mathbf{v}(M):M \text{ is a string module and is as a direct summand of }N\}.
\end{displaymath}

Indecomposable $A$-$A$-bimodules that are not $\Bbbk$-split
correspond to string $A\otimes_{\Bbbk} A^{\mathrm{op}}$-modules whose action
graphs have $k$ vertices, where $3\leq k\leq 2n-1$. Below we list all such bimodules,
fix notation for them and describe the corresponding action graphs. We use
the number of valleys in the action graph as a parameter, denoted by $t$.
Our choice for notation is motivated by the shape of the action graph.

{\bf Bimodules $W_{ij}^{t}$.}
For any $t\in\mathbb{N}_{n}^\ast$ and $i,j\in\mathbb{N}_{n-t+1}^\ast$, we
denote by $W_{ij}^{t}$ the following $A$-$A$-bimodule:
\begin{displaymath}
\xymatrix@C=1.6em@R=1.6em{
\mathtt{i\mid j}\ar[d]&&&&&\\
\mathtt{i\text{+}1\mid j}&\mathtt{i\text{+}1\mid j\text{+}1}\ar[l]\ar[d]&&&&\\
&\mathtt{i\text{+}2\mid j\text{+}1}&\mathtt{i\text{+}2\mid j\text{+}2}\ar[l]\ar[d]&&&\\
&&\ar@{.}[rd]&&&\\
&&&&\mathtt{i\text{+}t\text{-}1\mid j\text{+}t\text{-}1}\ar[l]\ar[d]&\\
&&&&\mathtt{i\text{+}t\mid j\text{+}t\text{-}1}&\mathtt{i\text{+}t\mid j\text{+}t}\ar[l]}
\end{displaymath}
In particular, we have ${}_AA_A\cong W_{11}^{n-1}$.

{\bf Bimodules $S_{ij}^{t}$.}
For any $t\in\mathbb{N}_{n-1}^\ast$, $i\in\mathbb{N}_{n-t}^\ast $, and $j\in\mathbb{N}_{n-t+1}^\ast$,
we  denote by $S_{ij}^{t}$ the following $A$-$A$-bimodule:
\begin{displaymath}
\xymatrix@C=1.6em@R=1.6em{
\mathtt{i\mid j}\ar[d]&&&&&\\
\mathtt{i\text{+}1\mid j}&\mathtt{i\text{+}1\mid j\text{+}1}\ar[l]\ar[d]&&&&\\
&\mathtt{i\text{+}2\mid j\text{+}1}&\mathtt{i\text{+}2\mid j\text{+}2}\ar[l]\ar[d]&&&\\
&&\ar@{.}[dr]&&&\\
&&&&\mathtt{i\text{+}t\text{-}1\mid j\text{+}t\text{-}1}\ar[l]\ar[d]&\\
&&&&\mathtt{i\text{+}t\mid j\text{+}t\text{-}1}&\mathtt{i\text{+}t\mid j\text{+}t}\ar[l]\ar[d]\\
&&&&&\mathtt{i\text{+}t\text{+}1\mid j\text{+}t}}
\end{displaymath}

{\bf Bimodules $N_{ij}^{t}$.}
For any $t\in\mathbb{N}_{n-1}^\ast$, $i\in\mathbb{N}_{n-t+1}^\ast$ and $j\in\mathbb{N}_{n-t}^\ast$,
we denote by $N_{ij}^{t}$ the following $A$-$A$-bimodule:
\begin{displaymath}
\xymatrix@C=1.6em@R=1.6em{
\mathtt{i\mid j}&\mathtt{i\mid j\text{+}1}\ar[l]\ar[d]&&&&\\
&\mathtt{i\text{+}1\mid j\text{+}1}&\mathtt{i\text{+}1\mid j\text{+}2}\ar[l]\ar[d]&&&\\
&&\ar@{.}[dr]&&&\\
&&&&\mathtt{i\text{+}t\text{-}1\mid j\text{+}t}\ar[l]\ar[d]&\\
&&&&\mathtt{i\text{+}t\mid j\text{+}t}&\mathtt{i\text{+}t\mid j\text{+}t\text{+}1}\ar[l]
}
\end{displaymath}

{\bf Bimodules $M_{ij}^{t}$.}
For any $t\in\mathbb{N}_{n-1}$ and $i,j\in\mathbb{N}_{n-t}^\ast$, we
denote by $M_{ij}^{t}$ the following $A$-$A$-bimodule:
\begin{displaymath}
\xymatrix@C=1.6em@R=1.6em{
\mathtt{i\mid j}&\mathtt{i\mid j\text{+}1}\ar[l]\ar[d]&&&&\\
&\mathtt{i\text{+}1\mid j\text{+}1}&\mathtt{i\text{+}1\mid j\text{+}2}\ar[l]\ar[d]&&&\\
&&\ar@{.}[dr]&&&\\
&&&&\mathtt{i\text{+}t\text{-}1\mid j\text{+}t}\ar[l]\ar[d]&\\
&&&&\mathtt{i\text{+}t\mid j\text{+}t}&\mathtt{i\text{+}t\mid j\text{+}t\text{+}1}\ar[l]\ar[d]\\
&&&&&\mathtt{i\text{+}t\text{+}1\mid j\text{+}t\text{+}1}
}
\end{displaymath}
In particular, $\mathrm{Hom}_{\Bbbk}({}_AA_A,\Bbbk)\cong M_{11}^{n-2}$.
As a concrete example, the action graph of the bimodule $M_{ij}^{0}$, where $ i,j\in\mathbb{N}_{n}^\ast$, is:
\begin{displaymath}
\xymatrix@C=1.6em@R=1.6em{
\mathtt{i\mid j}&\mathtt{i\mid j\text{+}1}\ar[l]\ar[d]\\
&\mathtt{i\text{+}1\mid j\text{+}1}
}
\end{displaymath}

Let $M$ be an indecomposable $A$-$A$-bimodule listed in Subsection~\ref{s3.1}-\ref{s3.2}.
Then vertices in $\Gamma_M$ correspond to the {\em standard basis} of $M$ denoted $\mathbf{B}(M)$.
We will often identify vertices in $\Gamma_M$ with this standard basis.
Note the following:
\begin{itemize}
\item every vertex of non-zero indegree generates a simple sub-bimodule;
\item every vertex of outdegree two generates a subbimodule which is isomorphic to some $M_{ij}^0$.
\end{itemize}

\subsection{Supports}\label{s3.25}

For an $A$-$A$-bimodule $X$, we define the {\em left support} of $X$ as the set of all $i\in\{1,2,\dots,n\}$
such that $e_iX\neq 0$. The left support of $X$ will be denoted $\mathrm{Lsupp}(X)$.
Similarly, the {\em right support} of $X$, denoted $\mathrm{Rsupp}(X)$,  is the set of all
$i\in\{1,2,\dots,n\}$ such that $Xe_i\neq 0$. Note that, for any indecomposable
$A$-$A$-bimodule $X$, both $\mathrm{Lsupp}(X)$ and $\mathrm{Rsupp}(X)$ are convex subset
of $\{1,2,\dots,n\}$. Here $X\subset \{1,2,\dots,n\}$ is {\em convex} if,
for any $x,y,z\in \{1,2,\dots,n\}$, the properties $x,z\in X$ and $x<y<z$ imply $y\in X$.

We define the {\em width} $\mathbf{w}(X)$  and the {\em height} $\mathbf{h}(X)$ of $X$ to be, respectively,
\begin{gather*}
\mathbf{w}(X):=1+\max\{i\in \mathrm{Rsupp}(X)\}-\min\{i\in \mathrm{Rsupp}(X)\},\\
\mathbf{h}(X):=1+\max\{i\in \mathrm{Lsupp}(X)\}-\min\{i\in \mathrm{Lsupp}(X)\}.
\end{gather*}
Note that, for an indecomposable $X$, we have $|\mathbf{w}(X)-\mathbf{h}(X)|\leq 1$.

It is worth noting that indecomposable bimodules in the families $N$ and $S$ are uniquely
determined (inside the set of isomorphism classes of indecomposable $A$-$A$-bimodules) by their
left and right supports and that for bimodules in these families we always have
$|\mathbf{w}(X)-\mathbf{h}(X)|=1$. Further, each indecomposable bimodule  in the families $M$
and  $W$ is uniquely determined by its left and right support inside its family. Moreover,
for each bimodule $X$ in the families $M$ and $W$, there is a unique bimodule in the other family
(i.e. in $W$ if $X$ is in $M$ and vice versa) which shares the  left and right supports with $X$.
Note that, if $X$ of the $M$-family and $Y$ of the $W$-family  share the  left and right supports,
then $\mathbf{v}(Y)>\mathbf{v}(X)$.

The following lemma contains a crucial observation for our combinatorial description. The claim
follows directly from the definitions.

\begin{lemma}\label{lemn-1}
For any $A$-$A$-bimodules $X$ and $Y$, we have
\begin{displaymath}
\mathrm{Lsupp}(X\otimes_A Y)\subset \mathrm{Lsupp}(X)\quad\text{ and }\quad
\mathrm{Rsupp}(X\otimes_A Y)\subset \mathrm{Rsupp}(Y).
\end{displaymath}
\end{lemma}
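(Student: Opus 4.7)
The plan is to prove both inclusions directly from the definition of the tensor product over $A$ by showing that the left/right actions of idempotents commute with the appropriate factor of the tensor product.

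For the first inclusion, I would take $i \in \mathrm{Lsupp}(X\otimes_A Y)$, which means $e_i(X\otimes_A Y) \neq 0$. Since the left action of $A$ on $X \otimes_A Y$ comes entirely from the left action on $X$, multiplication by $e_i$ is compatible with the tensor product in the sense that $e_i(X\otimes_A Y) = (e_i X) \otimes_A Y$. Hence $(e_i X)\otimes_A Y \neq 0$, which forces $e_i X \neq 0$, i.e.\ $i \in \mathrm{Lsupp}(X)$. The contrapositive formulation (if $e_iX = 0$ then $(e_iX)\otimes_A Y = 0$) is what one actually checks, and it is immediate.

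The second inclusion is entirely symmetric: from $(X \otimes_A Y)e_i = X \otimes_A (Ye_i)$ one deduces that $Ye_i = 0$ implies $(X\otimes_A Y)e_i = 0$, giving $\mathrm{Rsupp}(X \otimes_A Y) \subset \mathrm{Rsupp}(Y)$.

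There is really no obstacle here; the content of the lemma is exactly the tautology that tensoring over $A$ cannot enlarge the support on either side, and both identities $e_i(X\otimes_A Y) = (e_iX)\otimes_A Y$ and $(X\otimes_A Y)e_i = X\otimes_A(Ye_i)$ are standard properties of the bimodule tensor product. Thus the proof fits in a few lines, as the authors indicate.
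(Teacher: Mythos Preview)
Your proof is correct and is exactly what the paper intends: the lemma is stated with the remark that ``the claim follows directly from the definitions,'' and your argument using $e_i(X\otimes_A Y)=(e_iX)\otimes_A Y$ and $(X\otimes_A Y)e_i=X\otimes_A(Ye_i)$ is precisely the unpacking of that remark.
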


\subsection{Description of cells}\label{s3.27}

For $v\in\mathbb{N}_n$, denote by $\mathcal{J}_v$ the set of all non $\Bbbk$-split elements
in $\mathcal{S}$ having exactly $v$ valleys. Note that $\mathcal{J}_{n-1}$ consists just of
the identity $A$-$A$-bimodule  ${}_AA_A$.
Recall from \cite[Subsection~2.3]{CM} that a two-sided cell
is called {\em idempotent} provided that it contains three elements
$\mathrm{F}$, $\mathrm{G}$ and $\mathrm{H}$ (not necessarily different), such that
$\mathrm{F}$ is isomorphic to a direct summand of $\mathrm{G}\circ\mathrm{H}$.

In this subsection we present our main results. We start by describing two-sided cells in $\mathcal{S}$.
\vspace{1cm}

\begin{theorem}\label{thmn-3}
{\hspace{2mm}}

\begin{enumerate}[$($i$)$]
\item\label{thmn-3.1} Each $\mathcal{J}_v$ is a two-sided cell in $\mathcal{S}$, moreover,
each two-sided cell in $\mathcal{S}$ coincides with either $\mathcal{J}_{\Bbbk}$ or with
one of the $\mathcal{J}_v$'s.
\item\label{thmn-3.2} The two-sided cells of  $\mathcal{S}$ are linearly ordered as follows:
\begin{displaymath}
\mathcal{J}_{\Bbbk}\geq_J\mathcal{J}_0\geq_J\mathcal{J}_1\geq_J\dots \geq_J\mathcal{J}_{n-1}.
\end{displaymath}
\item\label{thmn-3.3} All two-sided cells but $\mathcal{J}_0$ are idempotent.
\end{enumerate}
\end{theorem}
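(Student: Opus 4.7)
The plan is to analyze tensor products of indecomposable bimodules explicitly in the standard basis of Subsection~\ref{s3.2}, with the number of valleys $\mathbf{v}$ as the main numerical invariant.

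For parts (i) and (ii), the key step is a \emph{monotonicity lemma}: for any indecomposable $A$-$A$-bimodules $X$ and $Y$ with $X$ non-$\Bbbk$-split, every non-$\Bbbk$-split indecomposable summand $Z$ of $X\otimes_A Y$ satisfies $\mathbf{v}(Z)\leq \mathbf{v}(X)$, and symmetrically when $Y$ is non-$\Bbbk$-split. One proves this by direct inspection: a nonzero basis tensor $x\otimes y$ requires the right support of $x$ to match the left support of $y$, and the defining relations $x\alpha_k\otimes y=x\otimes\alpha_k y$ force vanishing whenever the zigzag of $X$ on the right does not extend compatibly into the zigzag of $Y$ on the left. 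Tracing this through the four families $W, S, N, M$, one sees that every surviving indecomposable non-$\Bbbk$-split summand corresponds to a zigzag with at most $\min(\mathbf{v}(X),\mathbf{v}(Y))$ valleys. Combined with Proposition~\ref{propn-2}(i), from which $\mathcal{J}_{\Bbbk}$ is a tensor ideal and the maximum two-sided cell, monotonicity forces $\sim_J$-equivalent non-$\Bbbk$-split bimodules to share the same valley count, so every non-$\Bbbk$-split two-sided cell is contained in some $\mathcal{J}_v$.

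To finish (i) and establish (ii), I would provide explicit tensor identities. The central one is $W_{ij}^v\otimes_A W_{jl}^v\cong W_{il}^v$, checked directly in the standard basis (the $2v+1$ surviving basis vectors arrange themselves into the $W_{il}^v$-shape); this gives $\sim_J$-equivalence among all $W$-type bimodules in $\mathcal{J}_v$ and, for $v\geq 1$, the idempotency of $\mathcal{J}_v$ in (iii). Analogous computations of tensor products between $W$-bimodules and $M, N, S$-bimodules with the same $v$ exhibit the remaining three families as summands, showing that each $\mathcal{J}_v$ is a single two-sided cell. The linear chain in (ii) follows from a further identity such as $W_{11}^{v+1}\otimes_A W_{11}^v\cong W_{11}^v$ (placing $\mathcal{J}_{v+1}\leq_J \mathcal{J}_v$), combined with monotonicity in the reverse direction (which rules out coincidences); the identity bimodule ${}_AA_A\cong W_{11}^{n-1}$ constitutes the minimum cell $\mathcal{J}_{n-1}$, while $\mathcal{J}_{\Bbbk}$ is the maximum by Proposition~\ref{propn-2}(i).

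For the remaining part of (iii), idempotency of $\mathcal{J}_{\Bbbk}$ is witnessed by $L_{ii}\otimes_A L_{ii}\cong L_{ii}$, and non-idempotency of $\mathcal{J}_0$ requires a finite case analysis of $M_{ij}^0\otimes_A M_{kl}^0$, organized by the overlap of $\{j,j+1\}$ with $\{k,k+1\}$. For $k\notin\{j-1,j,j+1\}$ the tensor product vanishes; for $k=j-1$ it also vanishes via the forced relation $\mathtt{i|j}\otimes\mathtt{j|l+1}=0$; for $k=j$ it collapses to a two-dimensional vertical $\Bbbk$-split bimodule (via the relation $\mathtt{i|j}\otimes\mathtt{j|l}=0$, coming from $\alpha_j\cdot\mathtt{j|l}=0$ in $M_{jl}^0$); and for $k=j+1$ it yields the projective-injective $\Bbbk$-split bimodule $Ae_i\otimes_{\Bbbk}e_{l+1}A$. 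In every case the output lies in $\mathcal{J}_{\Bbbk}$, so no element of $\mathcal{J}_0$ is a summand of any $F\otimes_A G$ with $F,G\in\mathcal{J}_0$. The main obstacle throughout is accumulating enough explicit tensor product computations to establish the cross-family $\sim_J$-equivalences within each $\mathcal{J}_v$; these are routine but combinatorially involved.
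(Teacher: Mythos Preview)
Your approach is correct in outline but takes a genuinely different route from the paper's. The paper does \emph{not} prove a monotonicity lemma for $\mathbf{v}$ directly, nor does it compute $W_{ij}^v\otimes_A W_{jl}^v$. Instead, claim~\eqref{thmn-3.1} is deduced from the left and right cell classifications (Theorems~\ref{thmn-4} and~\ref{thmn-5}), which are established using the twisted shift bimodules $A\tilde{e}^{\varphi}$ and ${}^{\varphi}\tilde{e}A$ of Lemma~\ref{lemn-11}; these move bimodules within a fixed type while preserving one support, and specific ideal/quotient arguments (Lemma~\ref{lemn-12}) then link $W$ with $S$ and $M$ with $N$ inside left cells. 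For claim~\eqref{thmn-3.2} the paper uses the quotient interpretation $W_{11}^{j}\cong A/A(e_{j+2}+\cdots+e_n)A$ to get $W_{11}^j\otimes_A M_{11}^{j-1}\cong M_{11}^{j-1}$, which is morally the same trick as your $W_{11}^{v+1}\otimes_A W_{11}^v\cong W_{11}^v$. For claim~\eqref{thmn-3.3} the paper's case analysis of $\mathcal{J}_0$ is organized via short exact sequences rather than basis tensors, but is equivalent to yours.

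What your approach buys is a self-contained argument for the two-sided structure that does not pass through the finer left/right cell picture; what the paper's approach buys is that the shift-bimodule machinery simultaneously delivers Theorems~\ref{thmn-4} and~\ref{thmn-5} and avoids the large family of explicit tensor computations you defer at the end. Two small cautions on your side: your monotonicity lemma is only sketched (``tracing through the four families''), and while it is true, it is the step doing real work and deserves a cleaner proof---for instance via the right $A$-module decomposition of $X$ into copies of $e_sA$ and $L_s$. Also, your $k=j$ subcase in part~\eqref{thmn-3.3} is slightly off: $M_{ij}^0\otimes_A M_{jl}^0$ is actually one-dimensional (the relation $\mathtt{i{+}1|j{+}1}\,\alpha_j\otimes\mathtt{j|l{+}1}=\mathtt{i{+}1|j{+}1}\otimes\mathtt{j{+}1|l{+}1}$ together with $\mathtt{i{+}1|j{+}1}\,\alpha_j=0$ kills the second surviving vector), though of course this is still $\Bbbk$-split so your conclusion stands.
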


As left and right cells in $\mathcal{J}_{\Bbbk}$ are already described in Proposition~\ref{propn-2},
it is left to consider left and right cells formed by non $\Bbbk$-split bimodules.

\begin{theorem}\label{thmn-4}
Let $v\in \mathbb{N}_n$.

\begin{enumerate}[$($i$)$]
\item\label{thmn-4.1} If $v>0$, then, for each $j\in\mathbb{N}_{n-v+1}^*$, the set of all bimodules of
type $W$ and $S$ in $\mathcal{J}_v$ with right support $\{j,j+1,\dots,j+v\}$ forms a left cell in $\mathcal{J}_v$.
\item\label{thmn-4.2} For each $j\in\mathbb{N}_{n-v}^*$, the set of all bimodules of
type $M$ and $N$ in $\mathcal{J}_v$ with right support $\{j,j+1,\dots,j+v+1\}$ forms a left cell in $\mathcal{J}_v$.
\item\label{thmn-4.3} Each left cell in $\mathcal{J}_v$ is of the form given by \eqref{thmn-4.1}
or \eqref{thmn-4.2}.
\end{enumerate}
\end{theorem}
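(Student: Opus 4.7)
The plan has two parts: an upper bound on left cells using the right-support invariant, and a lower bound by explicit tensor-product computations. The upper bound is essentially combinatorial and straightforward; the lower bound is where the substantive work lies.

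For the upper bound, I would apply Lemma~\ref{lemn-1} in both directions of the equivalence $X \sim_L Y$ to deduce $\mathrm{Rsupp}(X) = \mathrm{Rsupp}(Y)$. Indeed, if $X \leq_L Y$, then $Y$ is a summand of $Z \otimes_A X$ for some $Z$, whence $\mathrm{Rsupp}(Y) \subset \mathrm{Rsupp}(X)$; the reverse inclusion follows symmetrically. Hence every left cell in $\mathcal{J}_v$ is contained in the set of indecomposables with a fixed right support. Since bimodules of types $W$ and $S$ with $v$ valleys have right support of size $v+1$, while those of types $M$ and $N$ have right support of size $v+2$, the families in (i) and (ii) cannot mix. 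A direct enumeration of indecomposables with fixed right support in $\mathcal{J}_v$, using Subsection~\ref{s3.2} and the supports listed in Subsection~\ref{s3.25}, shows that the proposed sets exhaust all indecomposables of the given shape, so part (iii) follows once (i) and (ii) are established.

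For the lower bound I need to show each proposed set lies in a single left cell. By transitivity of $\leq_L$ it suffices to handle a small collection of generating moves: shifting the left-support index $i$ by one within the same type, and switching between $W$ and $S$ (respectively $M$ and $N$) for compatible choices of left support. For each move I would exhibit an explicit bimodule $Z$ such that $Z \otimes_A Y$ has the target bimodule $Y'$ as a direct summand. Natural candidates for $Z$ are small non-$\Bbbk$-split string bimodules whose right support meets the left support of $Y$ precisely at the vertex where the change occurs, so that the tensor product grafts a new bottom-left or top-right segment onto the action graph of $Y$. When $v > 0$, Theorem~\ref{thmn-3}(\ref{thmn-3.3}) allows $Z$ to be taken inside $\mathcal{J}_v$ for some moves; for $v = 0$, where $\mathcal{J}_0$ is not idempotent and consists only of $M$-type bimodules, the required $Z$ must come from a strictly higher two-sided cell (typically from $\mathcal{J}_1$ or $\mathcal{J}_\Bbbk$).

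The main obstacle is the systematic bookkeeping of these tensor products of string bimodules. In principle each product is mechanical, computable by tracing through the standard basis $\mathbf{B}(\cdot)$ and matching basis elements via the projective covers $Ae_k \otimes_\Bbbk e_kA$ in the middle, but the direct-summand decomposition generally produces several indecomposable components and one must verify both that the desired bimodule appears and that no non-$\Bbbk$-split summand with the wrong right support sneaks in. I expect the cleanest route is to anchor each proposed set at a distinguished bimodule (for instance $W_{1j}^v$ in (i) and $M_{1j}^v$ in (ii)) and to show that every other bimodule in the set arises as a summand of a single tensor product involving the anchor. This approach dovetails with the generating-set results of Section~\ref{snew}, since Theorem~\ref{thmw-1} supplies precisely the tensor-product formulae needed to execute the generating moves, and reusing those computations should keep the proof short.
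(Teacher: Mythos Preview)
Your upper bound has a genuine gap. You correctly deduce from Lemma~\ref{lemn-1} that right support is a left-cell invariant, and you correctly observe that \emph{within} $\mathcal{J}_v$ the $W/S$ families and the $M/N$ families have different right-support sizes and hence cannot mix. But this is not enough to conclude that the sets in \eqref{thmn-4.1} and \eqref{thmn-4.2} are \emph{full} left cells. A right-support fibre of size $v+1$ contains not only $W$- and $S$-bimodules with $v$ valleys (in $\mathcal{J}_v$) but also $M$- and $N$-bimodules with $v-1$ valleys (in $\mathcal{J}_{v-1}$), and nothing you have said rules out a $W$-bimodule in $\mathcal{J}_v$ being left-equivalent to an $M$-bimodule in $\mathcal{J}_{v-1}$ with the same right support. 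You are tacitly assuming that left cells lie inside the sets $\mathcal{J}_v$, but in the paper this fact (Theorem~\ref{thmn-3}\eqref{thmn-3.1}) is \emph{derived from} Theorems~\ref{thmn-4} and~\ref{thmn-5}, so you cannot invoke it here. The paper closes this gap with Lemma~\ref{lemn-14}: for $X$ of type $W$ and $Y$ of type $M$ with $\mathrm{Rsupp}(X)=\mathrm{Rsupp}(Y)$, one checks that $X_A$ is not a quotient of any module in $\mathrm{add}(Y_A)$, whence $X$ cannot occur as a summand of any $Z\otimes_A Y$. This is the missing ingredient in your argument.

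Your lower bound is sound in outline and close to the paper's: the paper also reduces to generating moves, handling the shift of left support within a fixed type via the twisted bimodules $A\tilde{e}^{\varphi}$ and ${}^{\varphi}\tilde{e}A$ (Lemma~\ref{lemn-11}) and the passage $W\leftrightarrow S$ (resp.\ $M\leftrightarrow N$) via explicit sub- and quotient bimodules of ${}_AA_A$ (Lemmata~\ref{lemn-12} and~\ref{lemn-13}). Your plan to borrow the tensor-product formulae from Section~\ref{snew} is workable, but note that those lemmata are proved \emph{after} Theorem~\ref{thmn-4} and themselves quote Lemma~\ref{lemn-11}; so you would need to reorganise the logic, essentially reproving Lemmata~\ref{lemw-1}--\ref{lemw-3} first. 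Also, the appeal to Theorem~\ref{thmn-3}\eqref{thmn-3.3} is irrelevant: the bimodule $Z$ witnessing $Y'\leq_L Y$ may be taken anywhere in $A$-mod-$A$, not necessarily in $\mathcal{J}_v$.
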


\begin{theorem}\label{thmn-5}
Let $v\in \mathbb{N}_n$.

\begin{enumerate}[$($i$)$]
\item\label{thmn-5.1} If $v>0$, then, for each $j\in\mathbb{N}_{n-v+1}^*$, the set of all bimodules of
type $W$ and $N$ in $\mathcal{J}_v$ with left support $\{j,j+1,\dots,j+v\}$ forms a right cell in $\mathcal{J}_v$.
\item\label{thmn-5.2} For each $j\in\mathbb{N}_{n-v}^*$, the set of all bimodules of
type $M$ and $S$ in $\mathcal{J}_v$ with left support $\{j,j+1,\dots,j+v+1\}$ forms a right cell in $\mathcal{J}_v$.
\item\label{thmn-5.3} Each right cell in $\mathcal{J}_v$ is of the form given by \eqref{thmn-5.1}
or \eqref{thmn-5.2}.
\end{enumerate}
\end{theorem}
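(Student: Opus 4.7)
The plan is to deduce Theorem~\ref{thmn-5} from Theorem~\ref{thmn-4} via the natural left/right duality on $A\text{-mod-}A$. Let $\phi\colon A\to A$ be the anti-automorphism defined on generators by $\phi(e_i)=e_{n+1-i}$ and $\phi(\alpha_i)=\alpha_{n-i}$; a direct check confirms that it is well-defined (it respects $\alpha_{i+1}\alpha_i=0$) and involutive. Define a covariant auto-equivalence $F$ of $A\text{-mod-}A$ by setting $F(X)=X$ as a $\Bbbk$-space with twisted actions $a\cdot_F x:=x\cdot\phi(a)$ and $x\cdot_F a:=\phi(a)\cdot x$. Standard verifications show that $F$ is exact, involutive, preserves the valley count $\mathbf{v}$, swaps left and right supports via the reflection $i\mapsto n{+}1{-}i$, and satisfies $F(X\otimes_A Y)\cong F(Y)\otimes_A F(X)$. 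Consequently, $F$ preserves each two-sided cell $\mathcal{J}_v$ setwise (by Theorem~\ref{thmn-3}) and induces a bijection between left cells and right cells.

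The next step is to identify how $F$ permutes the four non-$\Bbbk$-split families by inspecting their action graphs, which $F$ reflects across the main diagonal and reindexes by $i\mapsto n{+}1{-}i$. The symmetric source-source shape of $W$ and sink-sink shape of $M$ are invariant under this transposition, so $F(W)=W$ and $F(M)=M$. The asymmetric shapes of $N$ (sink at the top-left, source at the bottom-right) and $S$ (source at the top-left, sink at the bottom-right) are transposes of one another, hence $F(N)=S$ and $F(S)=N$. Concretely, the effect on indices is of the form $(i,j)\mapsto(n{+}1{-}j{-}v,\,n{+}1{-}i{-}v)$ (with minor adjustments across the four families), and the map $j\mapsto n{+}1{-}j{-}v$ is a self-bijection of the relevant index sets $\mathbb{N}_{n-v+1}^\ast$ and $\mathbb{N}_{n-v}^\ast$.

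Applying $F$ now transports Theorem~\ref{thmn-4} to Theorem~\ref{thmn-5}: a left cell of $W$'s and $S$'s in $\mathcal{J}_v$ with right support $\{j,\dots,j+v\}$ is carried to a right cell of $W$'s and $N$'s with left support $\{n{+}1{-}j{-}v,\dots,n{+}1{-}j\}$, giving part (i); the same argument applied to Theorem~\ref{thmn-4}(ii) yields part (ii); and part (iii) follows from Theorem~\ref{thmn-4}(iii) because $F$ is a bijection on isomorphism classes. The main obstacle is the verification of the type swap $F(N)=S$ by careful inspection of the defining action graphs together with the bookkeeping of the index reflection $j\mapsto n{+}1{-}j{-}v$; once these are in place, no new tensor-product computations are required beyond those used in Theorem~\ref{thmn-4}.
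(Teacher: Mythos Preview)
Your proposal is correct and follows essentially the same route as the paper: the paper introduces the involutive anti-automorphism $\imath$ of $A$ (identical to your $\phi$), uses it to define an anti-involution on $A$-mod-$A$ swapping the two sides, and then simply states that this anti-involution makes Theorems~\ref{thmn-4} and~\ref{thmn-5} equivalent. Your write-up supplies more of the bookkeeping (the explicit type swap $W\leftrightarrow W$, $M\leftrightarrow M$, $N\leftrightarrow S$, and the index reflection), which the paper only alludes to later in Subsection~\ref{snew.3}, but the underlying argument is the same.
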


As an immediate consequence from Theorems~\ref{thmn-3}, \ref{thmn-4} and \ref{thmn-5}, we have:

\begin{corollary}\label{corn-6}
All two-sided cells in $\mathcal{J}$ are {\em strongly regular} in the sense that we have
$|\mathcal{L}\cap\mathcal{R}|=1$, for any left cell $\mathcal{L}$ in $\mathcal{J}$
and any right cell $\mathcal{R}$ in $\mathcal{J}$.
\end{corollary}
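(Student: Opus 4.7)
The plan is to reduce the corollary directly to the classifications of left, right, and two-sided cells already established, combined with the uniqueness-by-supports observation recorded in Subsection~\ref{s3.25}. By Theorem~\ref{thmn-3}(i), every two-sided cell is either $\mathcal{J}_{\Bbbk}$ or some $\mathcal{J}_v$ with $v\in\mathbb{N}_n$, so it suffices to treat these two situations.

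For the maximal two-sided cell $\mathcal{J}_{\Bbbk}$, Proposition~\ref{propn-2}\eqref{propn-2.2} identifies each left cell $\mathcal{L}$ with an indecomposable right $A$-module $N$ (the cell being $\{X\otimes_{\Bbbk} N : X\in A\text{-mod indecomposable}\}$), and Proposition~\ref{propn-2}\eqref{propn-2.3} identifies each right cell $\mathcal{R}$ with an indecomposable left $A$-module $K$. Intersecting these descriptions forces a $\Bbbk$-split bimodule to have left tensor factor $K$ and right tensor factor $N$, so $\mathcal{L}\cap\mathcal{R}=\{K\otimes_{\Bbbk}N\}$.

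For $\mathcal{J}_v$ with $v\geq 1$, Theorem~\ref{thmn-4} tells us that each left cell is specified either by a right support of size $v+1$ together with the type-pair $\{W,S\}$, or by a right support of size $v+2$ together with $\{M,N\}$; dually, Theorem~\ref{thmn-5} specifies each right cell by a left support of size $v+1$ together with $\{W,N\}$, or by a left support of size $v+2$ together with $\{M,S\}$. The four possible intersections $\{W,S\}\cap\{W,N\}$, $\{W,S\}\cap\{M,S\}$, $\{M,N\}\cap\{W,N\}$, $\{M,N\}\cap\{M,S\}$ each pin down a unique type $W$, $S$, $N$, $M$ respectively, and simultaneously pin down both the left and the right support. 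By the observation in Subsection~\ref{s3.25} that each indecomposable bimodule within the family $W$, $S$, $N$, or $M$ is uniquely determined by its pair of supports, each such intersection contains at most one element; non-emptiness is witnessed by the explicit bimodules $W_{ij}^v$, $S_{ij}^v$, $N_{ij}^v$, $M_{ij}^v$, with the indices read off from the prescribed supports. The case $v=0$ is simpler: only the $M$-family appears in $\mathcal{J}_0$, and Theorems~\ref{thmn-4}\eqref{thmn-4.2} and \ref{thmn-5}\eqref{thmn-5.2} identify left and right cells respectively with fixed right and left supports, intersecting in a unique $M_{ij}^0$.

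Since all arguments merely combine known classifications with the uniqueness of indecomposables by type and supports, there is no genuine obstacle; the only bookkeeping that requires some care is matching the type-pairs appearing in Theorems~\ref{thmn-4} and \ref{thmn-5} so that each of the four intersections $W\cap W$, $S\cap S$, $N\cap N$, $M\cap M$ picks out a single family, which is exactly what the statements of those theorems are designed to provide.
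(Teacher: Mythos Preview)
Your proof is correct and follows exactly the route the paper intends: the paper states the corollary as ``an immediate consequence from Theorems~\ref{thmn-3}, \ref{thmn-4} and \ref{thmn-5}'' with no further argument, and what you have written is precisely the bookkeeping that unpacks this immediacy, together with the case $\mathcal{J}_{\Bbbk}$ handled via Proposition~\ref{propn-2} and the uniqueness-by-supports observation from Subsection~\ref{s3.25}.
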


Regular two-sided cells play an important role in the theory developed in \cite{MM1}--\cite{MM6},
see also \cite{KM2}.

The algebra $A$ has an involutive anti-automorphism $\imath$ which swaps $e_1$ with $e_n$,
$e_2$ with $e_{n-1}$, $\alpha_1$ with $\alpha_{n-1}$ and so on. Using this anti-automorphism,
we can define an anti-involution on the tensor category $A$-mod-$A$ which we, abusing notation,
also denote by $\imath$. This anti-involution swaps the sides of bimodules
and twists both action by $\imath$. Consequently,
the multisemigroup $\mathcal{S}$ has an involutive anti-automorphism. This anti-automorphism makes
the statements of Theorem~\ref{thmn-4} and  Theorem~\ref{thmn-5} equivalent.
Remainder of the section is devoted to the proof of Theorems~\ref{thmn-3} and \ref{thmn-4}.

\subsection{Proof of Theorem~\ref{thmn-4}}\label{s3.28}

We start by introducing the following notation: for a subset $U\subset\mathbb{Z}$ and for $r\in\mathbb{Z}$,
we will denote by $U[r]$ the set $\{i+r\,:\, i\in U\}$.

\begin{lemma}\label{lemn-11}
Let us fixed a type, $W$, $M$, $S$ or $N$, of non-$\Bbbk$-split $A$-$A$-bimodules.
Then all bimodules of this type and with fixed right support belong to the same
left cell.
\end{lemma}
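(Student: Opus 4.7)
The strategy is to prove $X \sim_L Y$ for any two indecomposable bimodules $X$ and $Y$ of the same type and the same right support by exhibiting an explicit bimodule $Z$ with $Z \otimes_A X \cong Y$; a symmetric construction then gives $Z' \otimes_A Y \cong X$. Since the type and right support together determine all parameters of such a bimodule except the starting left index, we may write the left support of $X$ as $\{i, i+1, \ldots, i + \ell\}$ and that of $Y$ as $\{i', i'+1, \ldots, i' + \ell\}$, where $\ell$ depends only on the type: $\ell = t$ for types $W$ and $N$, and $\ell = t+1$ for types $S$ and $M$. The plan is to take $Z := W_{i',i}^{\ell}$. This $W$-type bimodule is defined precisely for $i,i'$ in the admissible range of left starting indices for the family of $X$; its right support $\{i, \ldots, i+\ell\}$ matches the left support of $X$, while its left support coincides with the desired $\{i', \ldots, i' + \ell\}$.

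The claimed isomorphism $W_{i',i}^{\ell} \otimes_A X \cong Y$ will be verified by a direct computation on the standard bases from Subsection~\ref{s3.2}. A pair $w \otimes x$ of basis vectors contributes only when the right index of $w$ equals the left index of $x$; the horizontal arrows of $W_{i',i}^{\ell}$ identify adjacent tensor elements via $w\alpha \otimes x = w \otimes \alpha x$, while the left-action of $A$ on $X$ propagated through the valleys of $W_{i',i}^{\ell}$ forces the off-diagonal tensor elements to vanish. After this collapse, the surviving tensor elements are in bijection with $\mathbf{B}(Y)$ and reproduce its action graph exactly. Interchanging the roles of $i$ and $i'$ yields $W_{i,i'}^{\ell} \otimes_A Y \cong X$, which combined with the first computation gives $X \sim_L Y$.

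The main obstacle is the case-by-case verification that the tensor-product collapse produces exactly $Y$ rather than a direct sum containing $Y$ together with unwanted summands, and in handling the minor variations between the four types at the top and bottom of the action graph; the ``extra'' source or sink vertex distinguishing types $S$, $N$, $M$ from $W$ must be correctly translated rather than split off. Once the computation is performed for type $W$, where the identity $W_{i',i}^{t} \otimes_A W_{ij}^{t} \cong W_{i'j}^{t}$ follows directly from the valley structure, the remaining types $S$, $N$, $M$ follow with only cosmetic changes, since the extra vertex carries no valley and therefore contributes no new tensor-product relation.
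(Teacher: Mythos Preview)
Your approach is correct and genuinely different from the paper's.  The paper does not tensor with a bimodule from the same cell as $X$; instead it introduces an algebra endomorphism $\varphi$ of $A$ (shifting every index by one) and the associated twisted bimodules $A\tilde e^{\varphi}$ and ${}^{\varphi}\tilde e A$, shows that ${}^{\varphi}\tilde e A\otimes_A A\tilde e^{\varphi}\cong A/Ae_nA$, and concludes that tensoring from the left with $A\tilde e^{\varphi}$ shifts the left support of any $X$ with $e_nX=0$ up by one while preserving type and right support.  An induction on the shift then yields the lemma.  Your bimodule $W_{i',i}^{\ell}$ does the whole shift in a single step: it is the regular bimodule of $A/J$ (with $J$ the ideal generated by the idempotents outside $\{i,\dots,i+\ell\}$) with its left action twisted by the index-shift isomorphism $A/J'\cong A/J$, so $W_{i',i}^{\ell}\otimes_A X\cong {}^{\theta}X\cong Y$ immediately.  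This observation would also make your hand computation cleaner: since $W_{i',i}^{\ell}$ is free of rank one as a right $A/J$-module, the surviving source-tensors you describe are automatically linearly independent, which your sketch does not quite justify.

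What each approach buys: yours is more direct and stays inside the family of $W$-bimodules, which makes the ``type-preserving'' claim transparent.  The paper's one-step shifters $A\tilde e^{\varphi}\cong W_{21}^{n-2}$ and ${}^{\varphi}\tilde e A\cong N_{11}^{n-2}$ are independent of $\ell$, and they are exactly the bimodules that reappear in Section~\ref{snew} as members of the minimal generating set; the effort invested here is reused verbatim in Lemmata~\ref{lemw-1} and~\ref{lemw-2}.
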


\begin{proof}
Denote by $\varphi$ the endomorphism of $A$ which sends
\begin{displaymath}
e_i\mapsto
\begin{cases}
e_{i+1},  & i+1\leq n;\\
0, & \text{ else;}
\end{cases}
\qquad
\alpha_i\mapsto
\begin{cases}
\alpha_{i+1},  & i+1< n;\\
0, & \text{ else.}
\end{cases}
\end{displaymath}
Set $\tilde{e}=e_2+e_3+\dots+e_n=\varphi(1)$. Then $\tilde{e}A$ has the natural structure of an
$A$-$A$-bimodule where the right action of $a$ is given by multiplication with $a$ and the left action
of $a$ is given by multiplication with $\varphi(a)$. We will denote this bimodule by
${}^{\varphi}\tilde{e}A$. Similarly, $A\tilde{e}$ has the natural structure of
an $A$-$A$-bimodule where the right action of $a$ is given by multiplication with $\varphi(a)$ and
the left action of $a$ is given by multiplication with $a$. We will denote this bimodule by
$A\tilde{e}^{\varphi}$.

We have $A\tilde{e}=\tilde{e}A\tilde{e}$. Consequently, the multiplication map
\begin{displaymath}
{}^{\varphi}\tilde{e}A\otimes_A A\tilde{e}^{\varphi}\to {}^{\varphi}\tilde{e}A\tilde{e}^{\varphi}
\end{displaymath}
is an isomorphism of $A$-$A$-bimodules. Set $I:=Ae_nA$. Then, mapping $1+I\mapsto \tilde{e}$,
gives rise to an isomorphism of $A$-$A$-bimodules between $A/I$ and ${}^{\varphi}\tilde{e}A\tilde{e}^{\varphi}$.
In particular, it follows that ${}^{\varphi}\tilde{e}A\otimes_A A\tilde{e}^{\varphi}\cong {}_A(A/I)_A$.

This means that, if $X\in A$-mod is annihilated by $I$, then, tensoring $X$ (from the left) first with
$A\tilde{e}^{\varphi}$ and then with ${}^{\varphi}\tilde{e}A$, gives $X$ back.
If $X$ has, additionally, the structure of an indecomposable $A$-$A$-bimodule, then tensoring with $A\tilde{e}^{\varphi}$
just twists the left action of $A$ on $X$ by $\varphi$ and hence does not change the type
($W$, $M$, $S$ or $N$) of $X$. In particular, we have
\begin{displaymath}
\mathrm{Lsupp}(A\tilde{e}^{\varphi}\otimes_A X)=(\mathrm{Lsupp}(X))[1]
\quad \text{and}\quad\mathrm{Rsupp}(A\tilde{e}^{\varphi}\otimes_A X)=\mathrm{Rsupp}(X).
\end{displaymath}
Starting now with $X$ such that $1\in\mathrm{Lsupp}(X)$ and applying this
procedure inductively, we will obtain that all bimodules of the same type as $X$ and with the same right
support as $X$ belong to the left cell of $X$. This proves the claim.
\end{proof}

\begin{lemma}\label{lemn-12}
Bimodules of types  $W$ and $S$ with the same right support belong to the same left cell.
\end{lemma}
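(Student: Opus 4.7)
The plan is to combine Lemma~\ref{lemn-11} with two explicit tensor-product identities. By Lemma~\ref{lemn-11}, all $W$-bimodules with a fixed right support $R=\{k,\ldots,k+v\}$ form one left cell, and similarly all $S$-bimodules with right support $R$ form one left cell. So it suffices to produce, for each admissible $R$, one $W$-bimodule and one $S$-bimodule with right support $R$ that lie in the same left cell.

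The explicit pair I would use is $W_{1k}^{v}$ and $S_{1k}^{v}$; both exist exactly in the only nontrivial range $1\le v\le n-2$. The core of the argument is the pair of isomorphisms
\begin{equation*}
S_{11}^{v}\otimes_{A}W_{1k}^{v}\cong S_{1k}^{v}
\qquad\text{and}\qquad
W_{11}^{v}\otimes_{A}S_{1k}^{v}\cong W_{1k}^{v}.
\end{equation*}
Granted these, the first exhibits $S_{1k}^{v}$ as a direct summand of $S_{11}^{v}\otimes_{A}W_{1k}^{v}$, which gives $W_{1k}^{v}\le_{L}S_{1k}^{v}$; the second symmetrically gives $S_{1k}^{v}\le_{L}W_{1k}^{v}$. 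Together these yield $W_{1k}^{v}\sim_{L}S_{1k}^{v}$, and combined with Lemma~\ref{lemn-11} the lemma follows.

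I would prove both isomorphisms by direct bookkeeping on the standard bases described in Subsection~\ref{s3.2}. For each tensor product, one enumerates the pairs $\mathtt{a\mid b}\otimes\mathtt{c\mid d}$ with $b=c$, and repeatedly applies the tensor relation $x\cdot\alpha\otimes y=x\otimes\alpha\cdot y$, using the arrows in the two action graphs to identify pairs or to annihilate them (the latter happens precisely when the relevant $\alpha$-image is a basis vector that is missing from one of the two factors). The key structural observation is that the bottommost vertex of $S_{11}^{v}$, the one with left index $v+2$, has vanishing right $\alpha$-action, because $S_{11}^{v}$ contains no basis vector one column to its left; this blocks certain identifications and preserves exactly one extra sink in the reduced diagram, so together with the standard $W$-type identifications on the top portion the tensor product acquires precisely the shape of the claimed bimodule on the right-hand side. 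The main obstacle is this combinatorial bookkeeping: once the surviving pairs are enumerated correctly, matching them against the templates of Subsection~\ref{s3.2} is routine.
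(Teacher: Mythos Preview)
Your argument is correct but follows a different route from the paper's. Both proofs invoke Lemma~\ref{lemn-11} to reduce to a single explicit pair, but the paper works with the diagonal pair $X=W_{jj}^{k-j}$, $Y=S_{jj}^{k-j}$ (right support $\{j,\dots,k\}$) and tensors with the two-sided ideal $Q=A(e_j+\cdots+e_k)A$ of ${}_AA_A$ and its quotient $Q'=Q/e_{k+1}Q$; it then identifies $Q\otimes_A X$ and $Q'\otimes_A Y$ via the adjunction
\[
\mathrm{Hom}_{A\text{-}A}(Q\otimes_A X,Y)\cong\mathrm{Hom}_{A\text{-}A}(X,\mathrm{Hom}_{A\text{-}}(Q,Y))
\]
rather than by a basis computation. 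Your tensoring bimodules $S_{11}^{v}$ and $W_{11}^{v}$ are in fact exactly the paper's $Q$ and $Q'$ in the special case $j=1$, but you verify the product formulas by explicit string-module bookkeeping instead. Your second isomorphism becomes a one-liner once one observes $W_{11}^{v}\cong A/I$ for $I=A(e_{v+2}+\cdots+e_n)A$, whence $W_{11}^{v}\otimes_A S_{1k}^{v}\cong S_{1k}^{v}/IS_{1k}^{v}=W_{1k}^{v}$. The paper's adjunction argument has the advantage of being reused later (to show the cells $\mathcal{J}_v$ are idempotent in Theorem~\ref{thmn-3}), whereas your direct computation is more self-contained and avoids the separate edge-case treatment the paper needs when $k=n$.
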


\begin{proof}
After Lemma~\ref{lemn-11}, it is enough to prove this claim for any two particular
bimodules of types $W$ and $S$ with the same right support. Let $X$ be the unique
indecomposable subquotient of ${}_AA_A$ of type $W$ with right support $\{j,j+1,\dots,k\}$.
Assume that $k<n$ and let $Y$ be the unique
indecomposable subquotient of ${}_AA_A$ of type $S$ with right support $\{j,j+1,\dots,k\}$.
Then $Y$ surjects onto $X$ with one-dimensional kernel.

Let $Q=A(e_j+e_{j+1}+\dots+e_k)A$ be a subbimodule of
the regular bimodule ${}_AA_A$. Consider the short exact sequence
\begin{displaymath}
0\to Q\to A\to \mathrm{Coker}\to 0
\end{displaymath}
of bimodules. By construction, $\mathrm{Coker}\otimes_A X=0$ and hence
$Q\otimes_A X$ surjects onto $X$. Consequently, we either have $Q\otimes_A X\cong X$
or $Q\otimes_A X\cong Y$. To determine which of these cases takes place, we use
adjunction:
\begin{displaymath}
\mathrm{Hom}_{A\text{-}A}(Q\otimes_A X,Y)\cong
\mathrm{Hom}_{A\text{-}A}(X,\mathrm{Hom}_{A\text{-}}(Q,Y)).
\end{displaymath}
It is now easy to check that $\mathrm{Hom}_{A\text{-}}(Q,Y)\cong X$ and thus
the right hand side of the above isomorphism is non-zero.
As $\mathrm{Hom}_{A\text{-}A}(X,Y)=0$, it follows that $Q\otimes_A X=Y$.

If we denote by $Q'$ the quotient of the above bimodule $Q$ by the subbimodule $e_{k+1}Q$,
then we again have either  $Q'\otimes_A Y\cong X$ or $Q'\otimes_A Y\cong Y$. By adjunction,
\begin{displaymath}
\mathrm{Hom}_{A\text{-}A}(Q'\otimes_A Y,Y)\cong
\mathrm{Hom}_{A\text{-}A}(Y,\mathrm{Hom}_{A\text{-}}(Q',Y)).
\end{displaymath}
However, now $Q'$ is a proper quotient of $Q$ and one sees that
the dimension of $\mathrm{Hom}_{A\text{-}}(Q',Y)$ is strictly smaller than that of
$\mathrm{Hom}_{A\text{-}}(Q,Y)$. Therefore $\mathrm{Hom}_{A\text{-}}(Q',Y)$ is a proper
subbimodule of $X$ which yields
\begin{displaymath}
\mathrm{Hom}_{A\text{-}A}(Q'\otimes_A Y,Y)=0.
\end{displaymath}
This implies $Q'\otimes_A Y\cong X$ and hence $X$ and $Y$ do belong to the same left cell.

In the case $k=n$ the arguments are similar with the only difference that
$X$ has to be changed to ${}^{\varphi}\tilde{e}A\otimes_A X$. This works only
if $j>1$. If $j=1$ and $k=n$, then there are no $A$-$A$-bimodules of type $S$
with such right support.
\end{proof}

\begin{lemma}\label{lemn-13}
Bimodules of types  $M$ and $N$ with the same right support belong to the same left cell.
\end{lemma}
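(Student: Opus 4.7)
The plan is to adapt the strategy of Lemma~\ref{lemn-12}, with one critical modification to the auxiliary bimodule $Q$. By Lemma~\ref{lemn-11}, it suffices to place a single pair of bimodules of types $M$ and $N$ sharing a right support into a common left cell. I take, for concreteness, the representatives $X := N_{j,j}^t$ and $Y := M_{j,j}^t$ (with $i=j$), both having right support $\{j,\dots,j+t+1\}$ and admitting a short exact sequence
\begin{displaymath}
0 \to L_{j+t+1,\,j+t+1} \to Y \to X \to 0
\end{displaymath}
visible directly from the action graphs.

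Set $Q := A(e_j+e_{j+1}+\dots+e_{j+t})A$, noting that the range $[j,j+t]$ is indexed by the valley count and is strictly shorter than the right support of $X$. This is the essential departure from the verbatim argument of Lemma~\ref{lemn-12}: the naive choice of range $[j,j+t+1]$ would give $\mathrm{Hom}_{A\text{-}}(Q,Y)\cong Y$ rather than $X$, making the adjunction argument vacuous. With the modified choice, $\mathrm{Rsupp}(A/Q) = [1,j-1]\cup[j+t+1,n]$ is disjoint from $\mathrm{Lsupp}(X)=\{j,\dots,j+t\}$, so $(A/Q)\otimes_A X = 0$ and therefore $Q\otimes_A X$ surjects onto $X$. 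Applying the tensor-Hom adjunction
\begin{displaymath}
\mathrm{Hom}_{A\text{-}A}(Q\otimes_A X,Y) \cong \mathrm{Hom}_{A\text{-}A}(X,\mathrm{Hom}_{A\text{-}}(Q,Y)),
\end{displaymath}
together with the direct computation $\mathrm{Hom}_{A\text{-}}(Q,Y)\cong X$ (one decomposes $Q\cong P_j\oplus\dots\oplus P_{j+t}$ as a left $A$-module, so $\mathrm{Hom}_{A\text{-}}(Q,Y)\cong\bigoplus_r e_rY$ as a vector space, and checks that the induced bimodule structure --- in which $\alpha_{j+t}$ acts as zero because the component $e_{j+t+1}Y$ is not among the summands --- reproduces the action graph of $N_{j,j}^t$), one finds $\mathrm{Hom}_{A\text{-}A}(X,\mathrm{Hom}_{A\text{-}}(Q,Y))=\mathrm{End}_{A\text{-}A}(X)=\Bbbk\neq 0$. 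Meanwhile $\mathrm{Hom}_{A\text{-}A}(X,Y)=0$, since any such non-zero map would either split $Y\to X$ (contradicting indecomposability of $Y$) or factor through the simple $L_{j+t+1,j+t+1}$, which does not lie in the socle of $X$. Hence $Q\otimes_A X\cong Y$, giving $X\leq_L Y$. For the reverse direction, I would take $Q':=Q/(e_{j+t+1}Q)$: the same computation then identifies $\mathrm{Hom}_{A\text{-}}(Q',Y)$ with a proper subbimodule of $X$, which in turn makes $\mathrm{Hom}_{A\text{-}A}(Y,\mathrm{Hom}_{A\text{-}}(Q',Y))=0$ and forces $Q'\otimes_A Y\cong X$. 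No boundary case occurs here, because the defining constraints on the $M$- and $N$-families already force $j+t+1\leq n$.

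The main obstacle is the explicit identification $\mathrm{Hom}_{A\text{-}}(Q,Y)\cong X$. It is here, and essentially only here, that Lemma~\ref{lemn-13} diverges from Lemma~\ref{lemn-12}: in the $W$/$S$ setting the analogous identification worked because the extra row of $S$ fell outside the range $[j,k]$ and was truncated away; here the corrected choice of range produces the same kind of truncation, but one must verify by direct inspection of the action graphs that the truncated bimodule is $N_{j,j}^t$ rather than $M_{j,j}^t$.
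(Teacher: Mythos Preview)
Your approach is exactly what the paper intends: it says only that the proof ``is similar to the proof of Lemma~\ref{lemn-12} and is left to the reader,'' and your adaptation --- in particular the crucial observation that the range of $Q$ must be taken to be $\mathrm{Lsupp}(X)=\{j,\dots,j+t\}$ rather than the full right support $\{j,\dots,j+t+1\}$ --- is the correct one.

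There is one small computational slip worth flagging. For $j>1$ the two-sided ideal $Q=A(e_j+\dots+e_{j+t})A$ also contains $\alpha_{j-1}$, so as a left $A$-module one has
\[
{}_AQ \;\cong\; P_j\oplus\dots\oplus P_{j+t}\;\oplus\;L_j,
\]
with the extra $L_j$ spanned by $\alpha_{j-1}$. Consequently $\mathrm{Hom}_{A\text{-}}(Q,Y)$ picks up an extra one-dimensional piece and is isomorphic to $W_{j-1,j}^{t+1}$ rather than to $X=N_{jj}^t$. (This did not arise in Lemma~\ref{lemn-12} because there $Y$ was of type $S$, whose left socle contains no $L_j$, so $\mathrm{Hom}_{A\text{-}}(L_j,Y)=0$; for $Y=M_{jj}^t$ the vertex $(j,j)$ lies in the left socle and the extra summand survives.) Your conclusion is unaffected, since $X$ still embeds as a subbimodule of $W_{j-1,j}^{t+1}$ and hence $\mathrm{Hom}_{A\text{-}A}(X,\mathrm{Hom}_{A\text{-}}(Q,Y))\neq 0$ remains valid. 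If you prefer the identification $\mathrm{Hom}_{A\text{-}}(Q,Y)\cong X$ to hold on the nose, simply choose the representatives $X=N_{1,j}^t$ and $Y=M_{1,j}^t$ with $i=1$ (Lemma~\ref{lemn-11} allows this), and take $Q=A(e_1+\dots+e_{t+1})A$; then there is no $\alpha_0$ and your decomposition is exact as written.
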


\begin{proof}
This is similar to the proof of Lemma~\ref{lemn-12}  and is left to the reader.
\end{proof}

\begin{lemma}\label{lemn-14}
Bimodules of types  $W$ and $M$ cannot belong to the same left cell.
\end{lemma}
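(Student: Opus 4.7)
My plan is to exhibit a computable invariant that distinguishes type-$W$ bimodules from type-$M$ bimodules and is monotone under the left preorder, then derive a contradiction by evaluating it. The invariant I intend to use is $V\mapsto V\otimes_A L_j$, where $L_j$ is the simple left $A$-module at vertex $j$ and $j=\min\mathrm{Rsupp}(V)$.

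First I would invoke Lemma~\ref{lemn-1} to reduce to the case where the two bimodules share a right support: if $X=W_{ij}^{t}$ and $Y=M_{i'j'}^{t'}$ lie in the same left cell, then $\mathrm{Rsupp}(X)=\mathrm{Rsupp}(Y)$, and in particular $j=j'$. Next I would tensor the partial minimal projective presentation $Ae_{j+1}\xrightarrow{\cdot\alpha_j}Ae_j\to L_j\to 0$ by $V\otimes_A(-)$ to obtain the formula
\[
V\otimes_A L_j\;\cong\;Ve_j\,\big/\,(Ve_{j+1})\alpha_j
\]
of left $A$-modules, which reduces the evaluation of the invariant to inspecting two columns of the bimodule diagrams in Subsection~\ref{s3.2}. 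I would then perform this inspection in the two cases: for $X=W_{ij}^{t}$ the column $Xe_j$ is the projective $P_i$ with top $(i,j)$ and socle $(i+1,j)$, and the right action of $\alpha_j$ on $Xe_{j+1}$ hits exactly the socle (the remaining generator of $Xe_{j+1}$ is a valley-sink in the $W$-diagram and contributes nothing), so the quotient is the simple top $L_i\neq 0$; for $Y=M_{i'j}^{t'}$ the column $Ye_j$ is one-dimensional, spanned by the top-left sink $(i',j)$, and the left arrow $(i',j{+}1)\mapsto(i',j)$ of the $M$-diagram shows $(Ye_{j+1})\alpha_j=Ye_j$, giving $Y\otimes_A L_j=0$.

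With this dichotomy in hand the contradiction is immediate. Since $Y\leq_L X$ by assumption, there is a bimodule $Z$ with $X$ a direct summand of $Z\otimes_A Y$; tensoring by $L_j$ on the right preserves direct summands, so $X\otimes_A L_j$ is a direct summand of $Z\otimes_A(Y\otimes_A L_j)=Z\otimes_A 0=0$, forcing $X\otimes_A L_j=0$ and contradicting the non-vanishing just computed.

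The only delicate point is the purely combinatorial bookkeeping involved in the two column computations: identifying $Xe_j, Xe_{j+1}, Ye_j, Ye_{j+1}$ as indecomposable left $A$-modules and checking which basis vectors of column $j+1$ survive the right action of $\alpha_j$. Both calculations are local to the top-left corner of the diagram and involve only a handful of basis vectors, so once the general formula for $V\otimes_A L_j$ is in place no further structural input is required.
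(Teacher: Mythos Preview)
Your argument is correct. The computation of $V\otimes_A L_j$ via the presentation of $L_j$ is sound, the two column inspections are accurate, and the associativity step $(Z\otimes_A Y)\otimes_A L_j\cong Z\otimes_A(Y\otimes_A L_j)=0$ gives the desired contradiction.

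The paper's proof is closely related but packaged a little differently. Rather than introducing the test functor $({-})\otimes_A L_j$, the paper looks directly at the right $A$-module structures $X_A$ and $Y_A$: decomposing these into indecomposables, one sees that $X_A$ (for $X$ of type $W$) has the simple right module at vertex $j$ as a direct summand, while no indecomposable summand of $Y_A$ (for $Y$ of type $M$) admits this simple as a quotient. Since any $Z\otimes_A Y$, viewed as a right $A$-module, is a quotient of a module in $\mathrm{add}(Y_A)$, it cannot contain $X_A$ as a summand. Your invariant detects precisely this feature, because for an indecomposable right $A$-module $N$ one has $N\otimes_A L_j\neq 0$ if and only if the simple right module at $j$ is a quotient of $N$. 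So the two proofs hinge on the same local obstruction at the leftmost column of the common right support; the paper phrases it structurally (quotients in $\mathrm{add}(Y_A)$), while you phrase it computationally (vanishing of a tensor product). Your formulation has the minor advantage that the monotonicity under $\leq_L$ is a one-line associativity argument, whereas the paper implicitly uses that $Z\otimes_A Y$ is a quotient of $Z\otimes_\Bbbk Y$.
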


\begin{proof}
Let $X$ be an $A$-$A$-bimodule of type $W$ and $Y$ an $A$-$A$-bimodule of type $M$.
Taking Lemma~\ref{lemn-1} into account,
suppose that $\mathrm{Rsupp}(X)=\mathrm{Rsupp}(Y)$ and consider the right
$A$-modules $X_A$ and $Y_A$. Then from our explicit description of bimodules we
can see that $X_A$ is not a quotient of any module in  $\mathrm{add}(Y_A)$.
Therefore no $Z\otimes_A Y_A$ can have $X_A$ as a quotient, let alone direct summand.
The claim follows.
\end{proof}

Claims~\eqref{thmn-4.1} and \eqref{thmn-4.2} of Theorem~\ref{thmn-4} follow from
Lemmata~\ref{lemn-11}--\ref{lemn-14}. Claim~\eqref{thmn-4.3} follows from
claims~\eqref{thmn-4.1} and \eqref{thmn-4.2} and classification of indecomposable
$A$-$A$-bimodules.

\subsection{Proof of Theorem~\ref{thmn-3}}\label{s3.29}

Claim~\eqref{thmn-3.1} follows from Theorems~\ref{thmn-4} and Theorem~\ref{thmn-5}.
That fact that $\mathcal{J}_v$, for $v>1$, are idempotent follows from the proof
of Lemma~\ref{lemn-12}.

To prove that two-sided cells are linearly ordered, for $j=1,2,\dots,n-1$, consider
the quotient $A$-$A$-bimodule $Q_j:=A/A(e_{j+2}+\dots+e_n)A$ of $A$ (in particular,
$Q_{n-1}=A$). We have $Q_j\in \mathcal{J}_{j}$, for all $j$, in fact,
$Q_j\cong W_{11}^{j}$. Consider also the $A$-$A$-bimodule $\mathrm{Hom}_{\Bbbk}(Q_j,\Bbbk)\cong
M_{11}^{j-1}\in \mathcal{J}_{j-1}$. We can, in fact, interpret $Q_j$ as the identity
bimodule for the algebra $B:=A/A(e_{j+2}+\dots+e_n)A$. After this interpretation it
is clear that
\begin{displaymath}
W_{11}^{j}\otimes_B M_{11}^{j-1}\cong W_{11}^{j}\otimes_A M_{11}^{j-1}\cong M_{11}^{j-1}.
\end{displaymath}
Claim~\eqref{thmn-3.2} follows.

To complete the proof of Theorem~\ref{thmn-3}, it remains to show that
$\mathcal{J}_0$ is not idempotent. Note that $\mathcal{J}_0$ only consists of
bimodules of type $M$. Let $X\otimes_A Y$ be the tensor product of two bimodules
from $\mathcal{J}_0$.
We need to check that $X\otimes_A Y$ is $\Bbbk$-split. This can be done by
a direct computation or, alternatively, argued theoretically as follows.

If $\mathrm{Rsupp}(X)\cap\mathrm{Lsupp}(Y)=\varnothing$, then $X\otimes_A Y=0$.
If $\mathrm{Rsupp}(X)=\{i,i+1\}$ and $\mathrm{Lsupp}(Y)=\{i+1,i+2\}$, then,
tensoring $Y$ with the short exact sequence
\begin{displaymath}
0\to X'\to X\to \mathrm{Coker}\to 0,
\end{displaymath}
where $Ẍ́'$ is simple with $\mathrm{Rsupp}(X')=\{i\}$ and using
$X'\otimes_A Y=0$, implies $X\otimes_AY\cong \mathrm{Coker}\otimes_A Y$ is $\Bbbk$-split
as $\mathrm{Coker}$ is $\Bbbk$-split.

If $\mathrm{Rsupp}(X)=\{i+1,i+2\}$ and $\mathrm{Lsupp}(Y)=\{i,i+1\}$, then,
tensoring $Y$ with the short exact sequence
\begin{displaymath}
0\to X'\to X\to \mathrm{Coker}\to 0,
\end{displaymath}
where $X'$ is simple with $\mathrm{Rsupp}(X')=\{i+1\}$, we obtain
$\mathrm{Coker}\otimes_A Y=0$ and $X'\otimes Y=0$
implying $X\otimes_A Y=0$ which is certainly $\Bbbk$-split.

If $\mathrm{Rsupp}(X)=\mathrm{Lsupp}(Y)=\{i,i+1\}$, then we tensor $Y$ with the short exact sequence
\begin{displaymath}
0\to X'\to X\to \mathrm{Coker}\to 0,
\end{displaymath}
where $X'$ is simple with $\mathrm{Rsupp}(X')=\{i\}$.
Then $\mathrm{Coker}\otimes_A Y=0$ and thus the two-dimensional bimodule $X'\otimes_A Y$
surjects onto $X\otimes_A Y$. Therefore $X\otimes_A Y$ has dimension at most two and thus is $\Bbbk$-split.
The proof of Theorem~\ref{thmn-3} is complete.

\subsection{Adjunctions between indecomposable $A$-$A$-bimodules}\label{s3.5}
In this subsection we will describe all pairs of bimodules which correspond to adjoint pairs of functors.

\begin{lemma}\label{lemn-21}
For any $X,Y\in A$-mod-$A$, the pair $(X\otimes_A{}_-, Y\otimes_A{}_-)$ is an adjoint pair
of endofunctors of $A$-mod if and only if  $X$ is projective
as a left $A$-module and $\mathrm{Hom}_{A}(X,A)\cong Y$ as $A$-$A$-bimodules.
\end{lemma}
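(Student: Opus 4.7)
The plan is to invoke the classical tensor-hom adjunction together with uniqueness of adjoints. Recall that for any $A$-$A$-bimodule $X$, the functor $X\otimes_A-\colon A\text{-mod}\to A\text{-mod}$ has a canonical right adjoint $\mathrm{Hom}_{A\text{-}}(X,-)$, with its left $A$-module structure coming from the right $A$-action on $X$. Since right adjoints are unique up to natural isomorphism, the pair $(X\otimes_A-,\,Y\otimes_A-)$ is adjoint if and only if there is a natural isomorphism $Y\otimes_A-\cong \mathrm{Hom}_{A\text{-}}(X,-)$ of endofunctors of $A$-mod. Both directions of the lemma are then reformulations of this single equivalence.

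For the forward direction, suppose such a natural isomorphism exists. The left-hand side $Y\otimes_A-$ is right exact, hence so is $\mathrm{Hom}_{A\text{-}}(X,-)$, which forces $X$ to be projective as a left $A$-module. Since $X$ is finite-dimensional, it is automatically finitely generated. Evaluating the natural isomorphism at the bimodule ${}_AA$ yields $Y\cong Y\otimes_A A\cong \mathrm{Hom}_{A\text{-}}(X,A)$ as left $A$-modules. To upgrade this to an isomorphism of $A$-$A$-bimodules, I would exploit naturality: for each $a\in A$, right multiplication $r_a\colon {}_AA\to {}_AA$, $x\mapsto xa$, is a morphism of left $A$-modules, and the induced commutative square relating $1\otimes r_a$ on $Y\otimes_A A$ with postcomposition by $r_a$ on $\mathrm{Hom}_{A\text{-}}(X,A)$ shows that the extracted isomorphism intertwines the right $A$-actions.

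For the reverse direction, assume $X$ is projective as a left $A$-module and $Y\cong \mathrm{Hom}_{A\text{-}}(X,A)$ as $A$-$A$-bimodules. The canonical evaluation-style natural transformation $\mathrm{Hom}_{A\text{-}}(X,A)\otimes_A M\to \mathrm{Hom}_{A\text{-}}(X,M)$, sending $f\otimes m$ to the map $x\mapsto f(x)m$, is an isomorphism when $X={}_AA$, and hence, by additivity and the five lemma applied to a presentation of $X$ by free modules, for every finitely generated projective left $A$-module $X$. This produces a natural isomorphism $Y\otimes_A-\cong \mathrm{Hom}_{A\text{-}}(X,-)$, which, combined with tensor-hom adjunction, exhibits $Y\otimes_A-$ as the right adjoint of $X\otimes_A-$.

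The main subtlety is not in any deep computation but in correctly tracking bimodule structures: the functors in question take values in $A$-mod and visibly carry only a left $A$-module structure, so one must reconstitute the right $A$-action on $Y$ from a natural transformation of endofunctors of $A$-mod. The naturality argument sketched above is exactly what accomplishes this. Once the bookkeeping is in place, the equivalence reduces to the well-known fact that $\mathrm{Hom}_{A\text{-}}(X,-)$ is right exact (equivalently, exact) precisely when $X$ is projective, at which point the Eilenberg--Watts-style identification $\mathrm{Hom}_{A\text{-}}(X,-)\cong \mathrm{Hom}_{A\text{-}}(X,A)\otimes_A-$ is automatic.
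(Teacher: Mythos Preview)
Your proof is correct and follows essentially the same route as the paper: invoke the tensor--hom adjunction, use uniqueness of right adjoints to identify $Y\otimes_A-$ with $\mathrm{Hom}_{A\text{-}}(X,-)$, deduce left projectivity of $X$ from exactness, and then evaluate at $A$ to obtain the bimodule isomorphism. Your treatment is in fact more detailed than the paper's, which summarizes the recovery of the right $A$-action on $Y$ by the phrase ``exact functors are uniquely determined by their action on the category of projective $A$-modules'' and calls the converse ``straightforward''; your explicit naturality argument with right multiplications and your Eilenberg--Watts-style verification of the converse fill in precisely what the paper leaves to the reader.
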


\begin{proof}
As $(X\otimes_A{}_-, \mathrm{Hom}_{A}(X,{}_-))$ is an adjoint pair of functors, we have
\begin{displaymath}
Y\otimes_A{}_-\cong \mathrm{Hom}_{A}(X,{}_-),
\end{displaymath}
in particular, the latter functor must be exact and hence $X$ must be left $A$-projective.
As exact functors are uniquely determined by their action on the category of projective $A$-modules, we get
an isomorphism $\mathrm{Hom}_{A}(X,A)\cong Y$ of a $A$-$A$-bimodule. The converse implication
is straightforward.
\end{proof}

An indecomposable $A$-$A$-bimodule $X$ which is not $\Bbbk$-split is left projective
if and only if it belongs to the following set:
\begin{equation}\label{eqs}
\{W_{11}^{n-1}\}\cup\{W_{n-t\mid j}^t, S_{ij}^t, t\in\mathbb{N}_{n-1}^\ast,
i\in\mathbb{N}_{n-t}^\ast, j\in \mathbb{N}_{n-t+1}^\ast\}.
\end{equation}
As $W_{11}^{n-1}\cong{}_AA_A$, the functor $\mathrm{W}_{11}^{n-1}$ is self adjoint.
Considering the right adjoint of the functors given by tensoring with bimodules in \eqref{eqs},
we obtain the following proposition.

\begin{proposition}\label{prop3.5}
For any $t\in\mathbb{N}_{n-1}^\ast$, we have the following:
{\tiny\hspace{2mm}}
\begin{enumerate}[$($a$)$]
\item\label{prop3.5.1} for any $j\in \mathbb{N}_{n-t+1}^\ast$,
the pair $(\mathrm{W}_{n-t\vert j}^t,\mathrm{N}_{j\vert n-t-1}^t)$ is an adjoint pair of functors;
\item\label{prop3.5.2} for any $j\in \mathbb{N}_{n-t+1}^\ast$,
the pair $(\mathrm{S}_{1j}^t,\mathrm{W}_{j1}^t)$ is an adjoint pair of functors;
\item\label{prop3.5.3} for any $i\in \mathbb{N}_{n-t}^\ast\setminus\{1\}$ and $j\in \mathbb{N}_{n-t+1}^\ast$,
the pair $(\mathrm{S}_{ij}^t,\mathrm{N}_{j\vert i-1}^t)$ is an adjoint pair of functors.
\end{enumerate}
\end{proposition}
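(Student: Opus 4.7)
The plan is to apply Lemma~\ref{lemn-21}, according to which the pair $(X\otimes_A{}_-, Y\otimes_A{}_-)$ is adjoint if and only if $X$ is left projective and $Y\cong\mathrm{Hom}_{A\text{-}}(X,A)$ as $A$-$A$-bimodules. Left projectivity of the bimodules $W^{t}_{n-t,j}$ and $S^{t}_{ij}$ appearing in items \eqref{prop3.5.1}--\eqref{prop3.5.3} is already recorded in \eqref{eqs}, so it only remains to compute $\mathrm{Hom}_{A\text{-}}(X,A)$ as a bimodule in each of the three cases and match it with the claimed right adjoint.

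First I would decompose $X$ as a left $A$-module. Reading off the action graphs from Subsection~\ref{s3.2}, the standard basis elements $x_k$ at positions $(n-t+k, j+k)$, $k=0,1,\dots,t$, generate left submodules of $W^{t}_{n-t,j}$ isomorphic to $P_{n-t+k}$ and together furnish the decomposition $W^{t}_{n-t,j}\cong\bigoplus_{k=0}^{t}P_{n-t+k}$; analogously, $S^{t}_{ij}\cong\bigoplus_{k=0}^{t}P_{i+k}$ with generators $x_k$ at positions $(i+k,j+k)$. Writing $f(k)$ for the left index of $x_k$, it follows that as a right $A$-module
\[
\mathrm{Hom}_{A\text{-}}(X,A)\;\cong\;\bigoplus_{k=0}^{t}e_{f(k)}A,
\]
with dual generators $\phi_k$ characterised by $\phi_k(x_\ell)=\delta_{k\ell}\,e_{f(k)}$.

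The key step is to determine the left $A$-action on $\mathrm{Hom}_{A\text{-}}(X,A)$, given by $(a\cdot\phi)(x)=\phi(x\cdot a)$. Feeding in the right $A$-action on $X$ described by its action graph, a direct computation shows that $\phi_k$ has left weight $j+k$ and right weight $f(k)$; that, whenever $f(k)>1$, the element $\phi_k\alpha_{f(k)-1}$ has left weight $j+k$ and right weight $f(k)-1$; and that the crucial identity
\[
\alpha_{j+k}\cdot\phi_k \;=\; \phi_{k+1}\cdot\alpha_{f(k)}
\]
holds, because both sides vanish on every $x_m$ with $m\neq k+1$ and send $x_{k+1}$ to $\alpha_{f(k)}\in A$ via the valley relation $x_{k+1}\alpha_{j+k}=\alpha_{f(k)}x_k$. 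Assembling these data, the action graph of $\mathrm{Hom}_{A\text{-}}(X,A)$ matches precisely that of $N^{t}_{j,\,n-t-1}$ in case~\eqref{prop3.5.1}, of $W^{t}_{j,1}$ in case~\eqref{prop3.5.2}, and of $N^{t}_{j,\,i-1}$ in case~\eqref{prop3.5.3}; the required bimodule isomorphism then follows from the classification of indecomposable $A$-$A$-bimodules recalled in Subsections~\ref{s3.1}--\ref{s3.2}.

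The only subtle point, and the place where the three cases truly diverge, is case~\eqref{prop3.5.2}: there $f(0)=1$ makes $e_1 A=\Bbbk e_1$ one-dimensional, so the basis element $\phi_0\alpha_{f(0)-1}$ is absent. This truncates one column of the action graph, and precisely this truncation turns the $N$-shape seen in~\eqref{prop3.5.3} into the $W$-shape claimed in~\eqref{prop3.5.2}. Apart from this endpoint degeneration, the arguments are uniform across the three cases, so the main obstacle is really just clerical: one has to track carefully how the right $A$-action on $X$ is converted into the left $A$-action on $\mathrm{Hom}_{A\text{-}}(X,A)$, and which weight ends up on which axis of the resulting action graph.
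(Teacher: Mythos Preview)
Your argument is correct and follows the same overarching route as the paper: invoke Lemma~\ref{lemn-21}, note left projectivity from \eqref{eqs}, and then identify the bimodule $\mathrm{Hom}_{A\text{-}}(X,A)$. The difference lies entirely in how that identification is carried out. The paper does not compute the action graph explicitly; instead it observes that $\mathrm{Hom}_{A\text{-}}(X,A)$ is indecomposable (because $X$ is), then computes only its left and right supports, and appeals to the fact (recorded in Subsection~\ref{s3.25}) that bimodules of type $N$ are uniquely determined by their supports. In case~\eqref{prop3.5.2} the supports alone leave a two-fold ambiguity between a $W$-type and an $M$-type bimodule, and the paper resolves it by noting that the right adjoint of a left-projective bimodule must be exact, which forces type $W$. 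Your approach, by contrast, builds the entire action graph from the dual generators $\phi_k$ and the valley identity $\alpha_{j+k}\cdot\phi_k=\phi_{k+1}\cdot\alpha_{f(k)}$, so no external uniqueness or exactness argument is needed and the three cases are handled uniformly, with~\eqref{prop3.5.2} emerging as the boundary degeneration $f(0)=1$. The paper's argument is shorter and more conceptual; yours is more explicit and self-contained.
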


\begin{proof}
We start by proving claim~\eqref{prop3.5.1} any claim~\eqref{prop3.5.3} is proved similarly.
As $\mathrm{W}_{n-t\vert j}^t$ is left projective, its right adjoint is automatically exact.
We need to prove that $\mathrm{Hom}_{A}(\mathrm{W}_{n-t\vert j}^t,A)\cong \mathrm{N}_{j\vert n-t-1}^t$.

As $\mathrm{W}_{n-t\vert j}^t$ is indecomposable, as a bimodule, so is
$\mathrm{Hom}_{A}(\mathrm{W}_{n-t\vert j}^t,A)$. Now, recall that
indecomposable $A$-$A$-bimodules of type $N$ are uniquely determined by their left and right supports.
Therefore it is enough to check that the bimodules $\mathrm{Hom}_{A}(\mathrm{W}_{n-t\vert j}^t,A)$
and $\mathrm{N}_{j\vert n-t-1}^t$ have the same left support and the same right support.

The left $A$-action on $X:=\mathrm{Hom}_{A}(\mathrm{W}_{n-t\vert j}^t,A)$ comes from the right $A$-action on
$\mathrm{W}_{n-t\vert j}$. Because of our notation, the minimum $s$ for which $e_i$ does not annihilate
$\mathrm{W}_{n-t\vert j}$ on the right is $s=j$. Furthermore,
as a left $A$-module, $\mathrm{W}_{n-t\vert j}$ has $t+1$ direct summands. This implies that
\begin{displaymath}
\mathrm{Lsupp}(X)=\{j,j+1,\dots,j+t\}=\mathrm{Lsupp}(\mathrm{N}_{j\vert n-t-1}^t).
\end{displaymath}

The direct summand $Ae_{n-t+s}$, for $s=0,1,\dots,t$, of $\mathrm{W}_{n-t\vert j}^t$ maps only to
the direct summands $Ae_{n-t+s}$ and $Ae_{n-t+s-1}$ of $A$. This implies that
\begin{displaymath}
\mathrm{Rsupp}(X)=\{n-t-1,n-t,\dots,n\}=\mathrm{Rsupp}(\mathrm{N}_{j\vert n-t-1}^t).
\end{displaymath}
The claim follows.

Claim~\eqref{prop3.5.2} is also proved analogously to claim~\eqref{prop3.5.1} with one additional
remark: we determine the right adjoint using its support. For claim~\eqref{prop3.5.2}, the support argument
implies that the right adjoint might be either of type $W$ or of type $M$. However, exactness of
this right adjoint determines its type uniquely as $W$. The rest is completely analogous to claim~\eqref{prop3.5.1}.
\end{proof}

\section{A minimal generating set}\label{snew}

\subsection{The main result of the section}\label{snew.1}

The main result of this subsection is the following:

\begin{theorem}\label{thmw-1}
Assume $n\geq 3$.

\begin{enumerate}[$($i$)$]
\item \label{thmw-1.1}
The category $A$-mod-$A$ coincides with the minimal subcategory of $A$-mod-$A$ containing
\begin{equation}\label{eqw-1}
\{W_{11}^{n-1}\cong{}_AA_{A},\, W_{21}^{n-2},\, N_{11}^{n-2},\, S_{12}^{n-2}\}
\end{equation}
and closed under isomorphisms, tensor products and taking direct sums and direct summands.
\item\label{thmw-1.2}
The set \eqref{eqw-1} is minimal in the sense that no proper subset of
\eqref{eqw-1} has the property described in \eqref{thmw-1.1}.
\end{enumerate}
\end{theorem}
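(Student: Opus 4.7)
For part (i), I plan to proceed inductively along the chain of two-sided cells $\mathcal{J}_{\Bbbk} >_J \mathcal{J}_0 >_J \cdots >_J \mathcal{J}_{n-1}$ established in Theorem~\ref{thmn-3}. Since every direct summand of a tensor product lies in a cell that is $\geq_J$ the cells of the factors, the identity ${}_AA_A \in \mathcal{J}_{n-1}$ and the three given bimodules in $\mathcal{J}_{n-2}$ sit at the bottom of the chain, and tensoring can only move upward. The induction will first populate all nine indecomposables of $\mathcal{J}_{n-2}$ from the three provided generators in that cell, and then produce every indecomposable of $\mathcal{J}_{v}$ for $v = n-3, \ldots, 0$ and finally of $\mathcal{J}_{\Bbbk}$.

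For the base step, Corollary~\ref{corn-6} gives strong regularity of $\mathcal{J}_{n-2}$ with three left and three right cells. The three generators sit in three distinct right cells (left supports $\{2,\ldots,n\}$, $\{1,\ldots,n-1\}$, $\{1,\ldots,n\}$ respectively) and in three distinct left cells (right supports $\{1,\ldots,n-1\}$, $\{1,\ldots,n\}$, $\{2,\ldots,n\}$). The plan is to compute the various pairwise tensor products and use Lemma~\ref{lemn-1} to constrain their left and right supports by those of the outer factors, then use strong regularity to identify the unique $\mathcal{J}_{n-2}$-summand at each (left cell, right cell) position. This should produce the remaining six indecomposables of $\mathcal{J}_{n-2}$. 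For the inductive step from $\mathcal{J}_v$ to $\mathcal{J}_{v-1}$, I compute tensor products of the three generators with indecomposables obtained so far, guided by the diagrams of Subsection~\ref{s3.2} and by identities in the spirit of $W_{11}^{j} \otimes_A M_{11}^{j-1} \cong M_{11}^{j-1}$ (used in the proof of Theorem~\ref{thmn-3}(ii)) to transport between adjacent cells. For $\mathcal{J}_{\Bbbk}$, the non-idempotence of $\mathcal{J}_0$ (Theorem~\ref{thmn-3}(iii)) supplies $\Bbbk$-split summands from tensor products of suitable type-$M$ bimodules, and choosing the supports freely should recover all indecomposable $\Bbbk$-split bimodules listed in Proposition~\ref{propn-2}.

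For part (ii), each generator is shown to be necessary by an obstruction argument. The bimodule ${}_AA_A$ is the unique element of the minimum cell $\mathcal{J}_{n-1}$, so it cannot appear as a summand of any tensor product of the remaining three generators (all in $\mathcal{J}_{n-2}$, whose iterated products remain in cells $\geq_J \mathcal{J}_{n-2}$). For the removal of any single one of $W_{21}^{n-2}$, $N_{11}^{n-2}$, $S_{12}^{n-2}$, iterated application of Lemma~\ref{lemn-1} shows that for any tensor word $X_1 \otimes \cdots \otimes X_k$ the left support is contained in $\mathrm{Lsupp}(X_1)$ and the right support in $\mathrm{Rsupp}(X_k)$; inspecting the three candidates, I find that removing any single one leaves the remaining generators unable to simultaneously supply both the required left support and the required right support of the removed bimodule, so it cannot occur as a summand.

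The main obstacle will be the organized bookkeeping in part (i): verifying that iterated tensor products of the three generators actually sweep through every (left cell, right cell) position of every $\mathcal{J}_v$ without gaps, rather than just reaching some sub-collection. Lemma~\ref{lemn-1} combined with strong regularity makes each individual computation almost forced, but ensuring global coverage of all indecomposables requires a careful case analysis on cells and types; once this is in place, the minimality claims of (ii) should follow essentially formally from the support patterns that emerge in (i).
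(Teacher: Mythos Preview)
Your argument for part~\eqref{thmw-1.2} has a genuine gap. The support constraint from Lemma~\ref{lemn-1} gives only containment, and since $W_{11}^{n-1}\cong{}_AA_A$ has full left and full right support, any tensor word beginning and ending with $W_{11}^{n-1}$ is subject to no support restriction at all. Concretely, suppose you remove $W_{21}^{n-2}$; its left support $\{2,\dots,n\}$ and right support $\{1,\dots,n-1\}$ are both contained in $\{1,\dots,n\}$, so nothing you wrote rules out its appearance as a summand of, say, $S_{12}^{n-2}\otimes_A N_{11}^{n-2}$ or of longer words. The same failure occurs when removing $N_{11}^{n-2}$ or $S_{12}^{n-2}$: in each case the remaining generators have supports large enough to cover those of the removed bimodule, so Lemma~\ref{lemn-1} yields no obstruction.

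The paper's proof of \eqref{thmw-1.2} instead rests on three auxiliary lemmata (Lemmata~\ref{lemw-1}, \ref{lemw-2}, \ref{lemw-3}) which describe exactly how tensoring on either side with each of $W_{21}^{n-2}$, $N_{11}^{n-2}$, $S_{12}^{n-2}$ transforms the action graph $\Gamma_X$: roughly, $W_{21}^{n-2}$ shifts down or left, $N_{11}^{n-2}$ shifts up (and right, with a possible extra vertex), and $S_{12}^{n-2}$ shifts right (and up, with a possible extra vertex). From these one reads off a \emph{type} obstruction rather than a support obstruction: without $S_{12}^{n-2}$ only $\Bbbk$-split and types $W$, $N$ are reachable; without $N_{11}^{n-2}$ only $\Bbbk$-split and types $W$, $S$; and without $W_{21}^{n-2}$ one can only move graphs up or to the right, which never reaches $W_{21}^{n-2}$ from the other three starting graphs. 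These finer statements are invisible to Lemma~\ref{lemn-1}.

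For part~\eqref{thmw-1.1} your cell-by-cell induction is a reasonable organising principle, but the actual content --- knowing that the relevant tensor products land in the desired cell at the desired position, rather than jumping to a strictly higher cell or vanishing --- again requires precisely the explicit graph-moving descriptions of Lemmata~\ref{lemw-1}--\ref{lemw-3}. Strong regularity only tells you a cell position is occupied by at most one indecomposable; it does not tell you that your particular tensor product hits it. Once those lemmata are in hand, the paper's approach is to observe that starting from the four ``seed'' bimodules $W_{11}^{n-1}$, $M_{11}^{n-2}$, $S_{12}^{n-2}$, $N_{11}^{n-2}$ (the $M$-seed being produced first as $S_{12}^{n-2}\otimes_A W_{21}^{n-2}\otimes_A N_{11}^{n-2}$) and shifting their action graphs freely in all four directions reaches every string bimodule; the projective-injectives then come from tensoring two-dimensional strings. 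This bypasses the cell induction entirely.
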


The cases $n=1,2$ are too small to be interesting.

\subsection{Comments on the proof of Lemma~\ref{lemn-11}}\label{snew.3}

Note that some of the bimodules listed in \eqref{eqw-1} appeared already in Lemma~\ref{lemn-11}.
Namely, we have $A\tilde{e}^{\varphi}\cong W_{21}^{n-2}$ and
${}^{\varphi}\tilde{e}A\cong N_{11}^{n-2}$ in $A$-mod-$A$.
We can similarly describe the $A$-$A$-bimodule $S_{12}^{n-2}$ as being
isomorphic to $A\tilde{e'}^{\psi}$, where $\tilde{e'}=e_1+e_2+\dots+e_{n-1}$
and $\psi:=\imath\circ\varphi\circ\imath$
(see Subsection~\ref{s3.27} for the definition of $\imath$). Note that
$\tilde{e'}=\psi(1)$ and that $\psi$ sends
\begin{displaymath}
e_i\mapsto
\begin{cases}
e_{i-1},  & i-1\geq 1;\\
0, & \text{ else;}
\end{cases}
\qquad
\alpha_i\mapsto
\begin{cases}
\alpha_{i-1},  & i-1\geq 1;\\
0, & \text{ else}.
\end{cases}
\end{displaymath}

Making a parallel with the proof of Lemma~\ref{lemn-11}, we also have
the $A$-$A$-bimodule ${}^{\psi}\tilde{e'}A$. This bimodule is isomorphic to $W_{21}^{n-2}$.
In fact, the anti-involution $\imath$ on $A$-mod-$A$
fixes both $W_{11}^{n-1}$ and $W_{21}^{n-2}$ but swaps $N_{11}^{n-2}$ with $S_{12}^{n-2}$.
Similarly to the proof of Lemma~\ref{lemn-11}, we have
\begin{displaymath}
{}^{\psi}\tilde{e'}A\otimes_A A\tilde{e'}^{\psi}\cong {}^{\psi}\tilde{e'}A\tilde{e'}^{\psi}\cong{}_A(A/I')_A,
\end{displaymath}
where $I'=Ae_1A=Ae_1$.
This means that, if $X\in$ mod-$A$ is annihilated by $I'$, then,
tensoring $X$ first with
${}^{\psi}\tilde{e'}A$ and then with $A\tilde{e'}^{\psi}$ (both from the right), gives $X$ back.
If $X$ has the structure of an indecomposable $A$-$A$-bimodule, then tensoring with ${}^{\psi}\tilde{e'}A$
just twists the right action of $A$ on $X$ by $\psi$ and hence does not change the type
($W$, $M$, $S$ or $N$) of $X$.
Moreover, we have
\begin{displaymath}
\mathrm{Lsupp}(X\otimes_A {}^{\psi}\tilde{e'}A)=\mathrm{Lsupp}(X) \quad \text{and}\quad
\mathrm{Rsupp}(X\otimes_A {}^{\psi}\tilde{e'}A)=(\mathrm{Rsupp}(X))[-1]
\end{displaymath}
(here we use the notation $U[-1]$ from Subsection~\ref{s3.28}).
This fact can be used to prove the ``other side'' version of~Lemma~\ref{lemn-11},
which consequently contributes to the proof of~Theorem~\ref{thmn-5}.

\subsection{Auxiliary lemmata}\label{snew.2}

By construction, the action graph $\Gamma_X$ of an $A$-$A$-bi\-mo\-du\-le $X$ is a subgraph
of the graph \eqref{eq0}. In what follows, for a fixed $A$-$A$-bimodule $X$, we will describe the
outcome of tensoring of $X$ with $W_{21}^{n-2}$, $N_{11}^{n-2}$, and $S_{12}^{n-2}$, both from
the left and from the right, using combinatorial manipulations with the graph $\Gamma_X$,
considering the latter as a subgraph of \eqref{eq0}.

\begin{lemma}\label{lemw-1}
Let $X$ be an indecomposable $A$-$A$-bimodule.

\begin{enumerate}[$($i$)$]
\item\label{lemw-1.1} The action graph of the $A$-$A$-bimodule $W_{21}^{n-2}\otimes_A X$
is obtained from $\Gamma_X$ by moving the latter vertically one step down and then cutting
off all vertices and arrows which fall outside the graph in figure \eqref{eq0}.
\item\label{lemw-1.2} The action graph of the $A$-$A$-bimodule $X\otimes_A W_{21}^{n-2}$
is obtained from $\Gamma_X$ by moving the latter horizontally one step to the left
and then cutting off all vertices and arrows which fall outside the graph  in figure \eqref{eq0}.
\end{enumerate}
\end{lemma}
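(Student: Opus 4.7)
The plan is to exploit the identifications $W_{21}^{n-2}\cong A\tilde{e}^{\varphi}$ and $W_{21}^{n-2}\cong{}^{\psi}\tilde{e'}A$ recorded in Subsection~\ref{snew.3}, together with a direct basis-level computation of the tensor products. The standard basis of $W_{21}^{n-2}$ consists of elements $w_{kl}$ supported at positions $(k,k-1)$ for $k\in\{2,\dots,n\}$ and $(k,k-2)$ for $k\in\{3,\dots,n\}$, and its only nontrivial horizontal-arrow relation is $w_{k,k-1}\alpha_{k-2}=w_{k,k-2}$ for $k\in\{3,\dots,n\}$.

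For claim~\eqref{lemw-1.1}, I would work with the standard basis $\{x_{ij}\}_{(i,j)\in\Gamma_X}$ of $X$. Over $\Bbbk$, the tensor product $W_{21}^{n-2}\otimes_A X$ is spanned by elements $w_{kl}\otimes x_{lj}$ with $(k,l)\in\Gamma_{W_{21}^{n-2}}$ and $(l,j)\in\Gamma_X$, subject to the relation
\begin{displaymath}
w_{k,k-2}\otimes x_{k-2,j}\;=\;w_{k,k-1}\otimes\alpha_{k-2}\,x_{k-2,j}
\end{displaymath}
for each $k\in\{3,\dots,n\}$ and each $(k-2,j)\in\Gamma_X$. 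Fixing a target position $(p,j)$ with $p\in\{2,\dots,n\}$ and enumerating the candidate generators there, a short case analysis shows: the generator $w_{p,p-1}\otimes x_{p-1,j}$ is present precisely when $(p-1,j)\in\Gamma_X$, and the generator $w_{p,p-2}\otimes x_{p-2,j}$ (which requires $(p-2,j)\in\Gamma_X$ and $p\ge 3$) either collapses onto $w_{p,p-1}\otimes x_{p-1,j}$ via the displayed relation when the vertical arrow $(p-2,j)\to(p-1,j)$ is present in $\Gamma_X$, or vanishes outright otherwise (since then $\alpha_{p-2}\,x_{p-2,j}=0$). The conclusion is that a nonzero basis vector of $W_{21}^{n-2}\otimes_A X$ sits at $(p,j)$ if and only if $(p-1,j)\in\Gamma_X$, i.e., the action graph is $\Gamma_X$ shifted vertically down by one with any vertex landing in row $n+1$ truncated; moreover, vertices of $\Gamma_X$ in row $n$ cannot contribute at all, since no suitable $w_{kl}$ with $l=n$ exists. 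The arrows then follow: the left action of $\alpha_p$ on the factor $w_{p,p-1}$ yields the vertical arrows of the new graph, and the horizontal arrows are inherited directly from the right action on $X$.

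Claim~\eqref{lemw-1.2} is handled symmetrically. Using again $W_{21}^{n-2}\cong A\tilde{e}^{\varphi}$ and the canonical isomorphism $X\otimes_A A\tilde{e}\cong X\tilde{e}$, the left-module structure of $X\otimes_A W_{21}^{n-2}$ is visible, while the right action gets twisted by $\varphi$: the new right action of $e_j$ equals the old right action of $\varphi(e_j)=e_{j+1}$. This shifts the right support one step to the left, and column~$1$ of $\Gamma_X$ disappears since $\tilde{e}\,e_1=0$. Arrow-tracking then proceeds analogously to part~\eqref{lemw-1.1}, now applied to horizontal rather than vertical arrows. The principal obstacle is the combinatorial bookkeeping at each target position, which hinges on the key observation (immediate from inspecting the explicit types $W$, $M$, $N$, $S$ and the $\Bbbk$-split list in Subsections~\ref{s3.1}--\ref{s3.2}) that whenever two consecutive rows $i$ and $i+1$ appear in a single column of an indecomposable bimodule, the vertical arrow between them is always present. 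Isolating this fact as a preparatory sub-lemma keeps the argument clean.
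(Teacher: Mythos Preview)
Your approach is correct and genuinely different from the paper's. For part~\eqref{lemw-1.1} the paper does not compute with bases at all: it first observes (citing the proof of Lemma~\ref{lemn-11}) that tensoring on the left with $A\tilde{e}^{\varphi}$ simply shifts $\Gamma_X$ one step down whenever $e_nX=0$, and then reduces the general case to this one via the short exact sequence $0\to e_nX\to X\to X/e_nX\to 0$, using that $A\tilde{e}^{\varphi}\otimes_A e_nX=0$. Part~\eqref{lemw-1.2} is handled symmetrically, but through the \emph{other} identification $W_{21}^{n-2}\cong{}^{\psi}\tilde{e'}A$ and the reduction $X\mapsto X/Xe_1$. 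Your route is more elementary and self-contained: a direct spanning-set analysis for~\eqref{lemw-1.1}, and for~\eqref{lemw-1.2} the clean observation that $X\otimes_A A\tilde{e}^{\varphi}$ is literally $X\tilde{e}$ equipped with the $\varphi$-twisted right action, so no separate reduction step is needed. The paper's argument is shorter and more modular; yours trades that for explicitness.

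One point in part~\eqref{lemw-1.1} deserves tightening. You assert that, after matching middle indices, the displayed relation is the \emph{only} $\alpha$-relation. There is in fact a second family, coming from $w_{k,k-2}\cdot\alpha_{k-3}=0$: it imposes $w_{k,k-2}\otimes x_{k-2,j}=0$ whenever a vertical arrow $(k{-}3,j)\to(k{-}2,j)$ lies in $\Gamma_X$. For indecomposable $X$ this is harmless, since no column of $\Gamma_X$ contains three vertices and hence the extra relation is already implied by yours; but you should say so, or else your claimed basis is only a spanning set. A slicker fix: note that $W_{21}^{n-2}$ is right projective, namely $W_{21}^{n-2}\cong e_1A\oplus e_2A\oplus\cdots\oplus e_{n-1}A$ as a right $A$-module, whence $W_{21}^{n-2}\otimes_A X\cong\bigoplus_{l=1}^{n-1}e_lX$ as a vector space, confirming the dimension count directly.
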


\begin{proof}
From the proof of Lemma~\ref{lemn-11}, it follows that claim~\eqref{lemw-1.1} is true
as soon as $IX=0$ (recall that $I=Ae_nA=e_nA$). Note that in this case, no ``cutting off''
is necessary.

In general, we have $W_{21}^{n-2}\cong A\tilde{e}^{\varphi}$
and $IX=e_n X$ is an $A$-$A$-subbimodule of $X$.
If $e_nX\neq 0$, then we have $A\tilde{e}^{\varphi}\otimes_A e_nX=0$ and
hence $A\tilde{e}^{\varphi}\otimes_A X\cong A\tilde{e}^{\varphi}\otimes_A (X/e_nX)$.
As $I(X/e_nX)=0$, we can apply the argument from the previous paragraph.
Factoring $e_nX$ out corresponds precisely to ``cutting off'' those vertices and edges
which fall outside the graph  in figure \eqref{eq0} after the move. This completes the proof of claim~\eqref{lemw-1.1}.

From Subsection~\ref{snew.3} it follows that claim~\eqref{lemw-1.2} is true
as soon as  $XI'=0$. Note that in this case, no ``cutting off''
is necessary.

In general, we have $W_{21}^{n-2}\cong {}^{\psi}\tilde{e'}A$ and $XI'=Xe_1$
is an $A$-$A$-subbimodule of $X$.
If $Xe_1\neq 0$, then we have $Xe_1\otimes_A{}^{\psi}\tilde{e'}A=0$ and hence
\begin{displaymath}
X\otimes_A{}^{\psi}\tilde{e'}A\cong (X/Xe_1)\otimes_A{}^{\psi}\tilde{e'}A .
\end{displaymath}
As  $(X/Xe_1)I'=0$, we can apply the argument from the previous paragraph.
Factoring $Xe_1$ out corresponds precisely to ``cutting off'' those vertices and edges
which fall outside the graph  in figure \eqref{eq0} after the move.
This completes the proof of claim~\eqref{lemw-1.2}.
\end{proof}

Let $X$ be an indecomposable $A$-$A$-bimodule. A full subgraph $\Gamma$ of $\Gamma_X$ will be
called {\em thick} provided that, for any arrow $x\to y$ in $\Gamma_X$, the condition
$x\in \Gamma$ implies $y\in\Gamma$.

\begin{lemma}\label{lemw-2}
Let $X$ be an indecomposable and not $\Bbbk$-split $A$-$A$-bimodule.

\begin{enumerate}[$($i$)$]
\item\label{lemw-2.1} The action graph of the $A$-$A$-bimodule $N_{11}^{n-2}\otimes_A X$
is obtained from $\Gamma_X$ by moving the latter  vertically one step up
and then cutting off all vertices and edges which fall outside the graph  in figure  \eqref{eq0}.
\item\label{lemw-2.2} If $\dim_{\Bbbk} Xe_1=2$, then the action graph of the
$A$-$A$-bimodule $X\otimes_A N_{11}^{n-2}$ is obtained from $\Gamma_X$ in the following three steps:
\begin{itemize}
\item first we move $\Gamma_X$ one step to the right;
\item then we cut off the thick subgraph generated by
all vertices which fall outside the graph  in figure  \eqref{eq0}, we denote the resulting graph $\Gamma$;
\item finally, we add to $\Gamma$ one new vertex and one new arrow as follows: let $v$ be the
north-west corner of $\Gamma$, then we add to $\Gamma$ the immediate west neighbor $w$ of $v$
and the arrow connecting $v$ to $w$.
\end{itemize}
\item \label{lemw-2.3} If $\dim_{\Bbbk} Xe_1\neq2$, then the action graph of the $A$-$A$-bimodule
$X\otimes_A N_{11}^{n-2}$ is obtained from $\Gamma_X$ by moving the latter horizontally one step to the right
and then cutting off the thick subgraph generated by
all vertices which fall outside the graph in figure \eqref{eq0}.
\end{enumerate}
\end{lemma}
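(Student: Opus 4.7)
The plan mirrors the strategy of Lemma~\ref{lemw-1}, built around the identification $N_{11}^{n-2}\cong {}^{\varphi}\tilde{e}A$ from Subsection~\ref{snew.3}. Part (i) is a direct analogue of Lemma~\ref{lemw-1}(i): the standard isomorphism $\tilde{e}A\otimes_A X\cong\tilde{e}X$ of right $A$-modules becomes a bimodule isomorphism after recording that the twisted left action of $a$ corresponds to left multiplication by $\varphi(a)=e_{i+1}$ on $\tilde{e}X$. This has two effects on $\Gamma_X$: the row-$1$ vertices (corresponding to $e_1X$, which dies in the passage to $\tilde{e}X$) disappear, and the remaining vertices are shifted one row up. Together these are equivalent to shifting $\Gamma_X$ upward by one step and cutting off whatever falls outside~\eqref{eq0}.

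Parts (ii) and (iii) require a more careful basis-level computation because ${}^{\varphi}\tilde{e}A$ is not left-projective. I would work with the standard basis $\{v_s\}$ of $X$ (each $v_s$ at position $(r_s,c_s)$) and the basis $\{e_j : 2\leq j\leq n\}\cup\{\alpha_j : 1\leq j\leq n-1\}$ of $\tilde{e}A$. The balancing relation $xa\otimes y=x\otimes\varphi(a)y$, together with $v_s=v_se_{c_s}$, forces $v_s\otimes y$ to be nonzero only for $y=e_{c_s+1}$ or $y=\alpha_{c_s}$, and in either case one needs $c_s\leq n-1$. Write $F_s=v_s\otimes e_{c_s+1}$ (at position $(r_s,c_s+1)$) and $E_s=v_s\otimes\alpha_{c_s}$ (at position $(r_s,c_s)$). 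Applying the balancing identity with $a=\alpha_{c_s-1}$ then yields three rules: (a) along a horizontal arrow $v_s\to v_t$ in $\Gamma_X$ with $c_s\leq n-1$ one obtains $E_s=F_t$, merging the two candidates at $(r_s,c_s)$; (b) along such an arrow with $c_s=n$, the vanishing $\varphi(\alpha_{n-1})=0$ forces $v_t\otimes y=0$ for all $y$, and iterating along further $\Gamma_X$-arrows kills the entire descendant set, reproducing exactly the ``thick subgraph'' cut-off after shifting; (c) if $v_s$ has no outgoing horizontal arrow and $c_s\geq 2$, taking $y=e_{c_s}$ gives $E_s=0$, and reversing rule (a) likewise gives $E_s=0$ whenever $v_s$ has an incoming horizontal arrow.

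Running through each horizontal chain of $\Gamma_X$ in light of these rules shows that the surviving $F_s$'s are precisely the vertices of the shifted-and-thick-cut graph $\Gamma$, and the only surviving extra $E_s$'s come from horizontally isolated vertices of $\Gamma_X$ lying in column $1$. From the classification in Subsections~\ref{s3.1}--\ref{s3.2}, such a vertex exists iff $X$ is of type $W$ or $S$ with leftmost column index $j=1$, equivalently iff $\dim_{\Bbbk}Xe_1=2$; in that case it is the top-left vertex $(i,1)$, and the extra $E_{(i,1)}$ sits at position $(i,1)$, the immediate west neighbor of the shifted north-west corner $(i,2)=F_{(i,1)}$, joined to it by the horizontal arrow coming from $F_{(i,1)}\cdot\alpha_1 = v_{(i,1)}\otimes e_2\alpha_1 = v_{(i,1)}\otimes\alpha_1 = E_{(i,1)}$. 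This establishes (ii), and (iii) is the remaining case $\dim_{\Bbbk}Xe_1\in\{0,1\}$, in which no such isolated column-$1$ vertex exists. The main obstacle is the consistency check of rules (a)--(c) at vertices lying in the interior of a horizontal chain and the boundary bookkeeping at $c_s=1$ and $c_s=n$, which then must be packaged correctly into the concise ``shift + thick cut + optional north-west extra'' statement.
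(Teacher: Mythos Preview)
Your argument is correct and complete; the delicate bookkeeping you flag as ``the main obstacle'' does work out (in particular, each $v_t$ with an incoming horizontal arrow is a sink, so the thick cut-off never propagates past depth one, and the consistency of rules (a)--(c) follows because every vertex has at most one horizontal neighbour on each side). However, your route is genuinely different from the paper's.

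For part~(i), the paper does \emph{not} compute ${}^{\varphi}\tilde{e}A\otimes_A X$ directly. Instead it exploits the near-inverse relationship ${}^{\varphi}\tilde{e}A\otimes_A A\tilde{e}^{\varphi}\cong A/I$ established in Lemma~\ref{lemn-11}: when $e_1X=0$ one first \emph{defines} $Y$ by shifting $\Gamma_X$ up, observes via Lemma~\ref{lemw-1}(i) that $W_{21}^{n-2}\otimes_A Y\cong X$, and then cancels to obtain $N_{11}^{n-2}\otimes_A X\cong Y$; the case $e_1X\neq 0$ is reduced to this one using the short exact sequence $0\to\tilde{e}X\to X\to\mathrm{Coker}\to 0$ and right-projectivity of $N_{11}^{n-2}$. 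Your direct identification ${}^{\varphi}\tilde{e}A\otimes_A X\cong{}^{\varphi}(\tilde{e}X)$ is shorter and avoids invoking Lemma~\ref{lemw-1}.

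For parts~(ii) and~(iii), the paper again works structurally rather than element-by-element: it tensors $X$ with the short exact sequence
\[
0\to {}^{\varphi}\tilde{e}Ae_1\to {}^{\varphi}\tilde{e}A\to W_{12}^{n-2}\to 0,
\]
identifies $X\otimes_A W_{12}^{n-2}$ as the ``shift right by one'' (same mechanism as Lemma~\ref{lemn-11}), and then analyses whether the one-dimensional piece $X\otimes_A{}^{\varphi}\tilde{e}Ae_1$ survives---which happens precisely when $\dim_\Bbbk Xe_1=2$---using support inclusions from Lemma~\ref{lemn-1}. Your explicit basis computation reaches the same conclusion without the short exact sequence, at the cost of tracking the relations (a)--(c) by hand. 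The paper's approach is more conceptual and reuses earlier machinery; yours is self-contained and makes the appearance of the extra north-west vertex completely transparent as the lone surviving $E_s$.
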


A good example to illustrate the procedure described in Lemma~\ref{lemw-2}\eqref{lemw-2.2} is
the obvious isomorphism $W_{11}^{n-1}\otimes_A N_{11}^{n-2}\cong N_{11}^{n-2}$ based on the fact that
$W_{11}^{n-1}\cong {}_AA_A$.
In particular, this example shows that the second step of the procedure described in
Lemma~\ref{lemw-2}\eqref{lemw-2.2} can lead to elimination of some vertices which do not fall outside
the graph  in figure \eqref{eq0}. For $n=4$, the transformation in this example
can be depicted explicitly as follows:
\begin{equation}\label{eqeqnn1}
\xymatrix{
\bullet\ar[d]\ar@{--}[ddd]\ar@{--}[rrr]&&&\ar@{--}[ddd]\\
\bullet&\bullet\ar[d]\ar[l]&&\\
&\bullet&\bullet\ar[d]\ar[l]&\\
\ar@{--}[rrr]&&\bullet&\bullet\ar[l]\\
}\qquad\mapsto\qquad
\xymatrix{
\ar@{--}[ddd]\blacktriangle&\bullet\ar[d]\ar@{=>}[l]\ar@{--}[rr]&&\ar@{--}[dd]&\\
&\bullet&\bullet\ar[d]\ar[l]&&\\
&&\bullet&\bullet\ar@{.>}[d]\ar[l]&\\
\ar@{--}[rrr]&&&\circ&\circ\ar@{.>}[l]\\
}
\end{equation}
Here dashed lines indicate the boundaries of the graph  in figure \eqref{eq0},
dotted arrows and white vertices are the ones which are deleted during the second step of the
procedure described in  Lemma~\ref{lemw-2}~\eqref{lemw-2.2}. Finally, the triangle
vertex and the double arrow are the ones which are added during the last step of the
procedure described in  Lemma~\ref{lemw-2}~\eqref{lemw-2.2}.

\begin{proof}
Assume that $e_1X=0$ and let $Y$ be an $A$-$A$-bimodule such that $\Gamma_Y$ is obtained
by moving  $\Gamma_X$ one step up  (note that this is well-defined as $e_1X=0$). Then
$Y$ has the same type as $X$ and
\begin{displaymath}
\mathrm{Lsupp}(Y)=\mathrm{Lsupp}(X)[-1] \quad \text{and}\quad \mathrm{Rsupp}(Y)=\mathrm{Rsupp}(X).
\end{displaymath}
From Lemma~\ref{lemw-1}~\eqref{lemw-1.1} it follows that
$X\cong W_{21}^{n-2}\otimes_A Y$. This, together with the discussion in Subsection~\ref{snew.3},
implies $N_{11}^{n-2} \otimes_A X\cong Y$ in $A$-mod-$A$.
Claim~\eqref{lemw-2.1} in the case $e_1X=0$ follows.

Assume now that $e_1X\neq 0$.
Consider the following short exact sequence of $A$-$A$-bimodules:
\begin{equation}\label{neweq7}
0\to \tilde{e}X\to X\to \mathrm{Coker}\to 0,
\end{equation}
where $\tilde{e}X\to X$ is the natural inclusion. In this sequence we have that
${}_A\mathrm{Coker}$ is  semisimple, moreover, $e_i\mathrm{Coker}=0$, for all $i>1$.
Recall that $N_{11}^{n-2}\cong{}^{\varphi}\tilde{e}A$.
Then ${}^{\varphi}\tilde{e}A\otimes_A \mathrm{Coker}=0$. The functor
$N_{11}^{n-2}\otimes_A{}_-$ is exact as $N_{11}^{n-2}$ is right projective
(or due to Proposition~\ref{prop3.5}~\eqref{prop3.5.1}).
Therefore, applying
$N_{11}^{n-2}\otimes_A {}_-$ to \eqref{neweq7},
gives $N_{11}^{n-2}\otimes_A \tilde{e}X\cong N_{11}^{n-2}\otimes_AX$.
We can now apply the previous paragraph to $N_{11}^{n-2}\otimes_A \tilde{e}X$
and thus complete the proof of claim~\eqref{lemw-2.1}.

We have $\mathrm{Lsupp}(X\otimes_A {}^{\varphi}\tilde{e}A)\subset\mathrm{Lsupp}(X)$
by Lemma~\ref{lemn-1}.
Similarly to the proof of Lemma~\ref{lemn-11},
if $XI=0$, then, tensoring $X$ (from the right) with
${}^{\varphi}\tilde{e}A$ and then with $A\tilde{e}^{\varphi}$, gives $X$ back.
Consequently, in the case  $XI=0$ we have
\begin{displaymath}
\mathrm{Lsupp}(X\otimes_A {}^{\varphi}\tilde{e}A)=\mathrm{Lsupp}(X).
\end{displaymath}

Consider the short exact sequence
\begin{equation}\label{neweq8}
0\to {}^{\varphi}\tilde{e}Ae_1\to {}^{\varphi}\tilde{e}A\to \mathrm{Coker'}\to 0
\end{equation}
of $A$-$A$-bimodules where ${}^{\varphi}\tilde{e}Ae_1\to {}^{\varphi}\tilde{e}A$
is the natural inclusion. Then we have  $\mathrm{Coker'}\cong W_{12}^{n-2}$.
Using Lemma~\ref{lemw-1}~\eqref{lemw-1.2} or the fact that $e_1A\tilde{e}^{\varphi}=0$, we have
\begin{displaymath}
\mathrm{Coker'}\otimes_AA\tilde{e}^{\varphi}\cong \mathrm{{}^{\varphi}\tilde{e}A}\otimes_A A\tilde{e}^{\varphi}.
\end{displaymath}
Therefore, tensoring $X$ (from the right) with $\mathrm{Coker'}$ and then with
$A\tilde{e}^{\varphi}$ also gives $X$ back.
Applying the functor $X\otimes_A{}_-$ to~\eqref{neweq8}, we thus obtain
\begin{displaymath}
\mathrm{Rsupp}(X\otimes_A {}^{\varphi}\tilde{e}A)\subset
\mathrm{Rsupp}(X\otimes_A {}^{\varphi}\tilde{e}Ae_1)\cup
\mathrm{Rsupp}(X\otimes_A \mathrm{Coker'}).
\end{displaymath}
As tensor functors are right exact, we also have
\begin{displaymath}
\mathrm{Rsupp}(X\otimes_A \mathrm{Coker'})\subset\mathrm{Rsupp}(X\otimes_A {}^{\varphi}\tilde{e}A).
\end{displaymath}
Just like in the proof of Lemma~\ref{lemn-11},
tensoring with $W_{12}^{n-2}$ twists the right $A$-action on $X$ by $\varphi$
and hence does not change the type ($W$, $M$, $S$, or $N$) of $X$.
In particular, we have
\begin{displaymath}
\mathrm{Lsupp}(X\otimes_A \mathrm{Coker'})=\mathrm{Lsupp}(X)\quad\text{and}\quad\mathrm{Rsupp}(X\otimes_A \mathrm{Coker'})=\mathrm{Rsupp}(X)[1].
\end{displaymath}
Therefore, considering the action graph of $X\otimes_A \mathrm{Coker'}$ corresponds to
the first step of claim~\eqref{lemw-2.2}.

Note that $\mathrm{Rsupp}(X\otimes_A {}^{\varphi}\tilde{e}Ae_1)\subset \{1\}$.
Hence, the above arguments imply
\begin{displaymath}
(\mathrm{Rsupp}(X))[1]\subset\mathrm{Rsupp}(X\otimes_A {}^{\varphi}\tilde{e}A)\subset\{1\}\cup(\mathrm{Rsupp}(X))[1].
\end{displaymath}

Now, if $\dim_{\Bbbk} Xe_1=2$, then~$\mathbf{B}(X)$ contains the vertices $\mathtt{j\mid 1}$
and  $\mathtt{j\text{+}1\mid 1}$, for some $j$.
Moreover, we obtain that the $A$-$A$-bimodule $X\otimes_A {}^{\varphi}\tilde{e}Ae_1$ is of dimension one with
basis $\mathtt{j\mid 1}\otimes \alpha_1$ (here it is important that 
$X$ is not $\Bbbk$-split). The image of this element in
$X\otimes_A {}^{\varphi}\tilde{e}A$ is also non-zero.
This implies that
\begin{displaymath}
\mathrm{Rsupp}(X\otimes_A {}^{\varphi}\tilde{e}A)=\{1\}\cup(\mathrm{Rsupp}(X))[1].
\end{displaymath}
Therefore the sequence
\begin{displaymath}
0\to X\otimes_A{}^{\varphi}\tilde{e}Ae_1\to X\otimes_A{}^{\varphi}\tilde{e}A\to X\otimes_A\mathrm{Coker'}\to 0
\end{displaymath}
is exact, which completes the proof of claim~\eqref{lemw-2.2} in the case $XI=0$.

If $\dim_{\Bbbk} Xe_1\neq2$,
then we have $X\otimes_A{}^{\varphi}\tilde{e}Ae_1=0$ as ${}_A({}^{\varphi}\tilde{e}Ae_1)$ is 
simple and $X$ is not $\Bbbk$-split.
This implies that $X\otimes_A{}^{\varphi}\tilde{e}A\cong X\otimes_A\mathrm{Coker'}$, which
establishes claim~\eqref{lemw-2.3} in the case $XI=0$.

If $XI\neq0$, then we have $XI\otimes_A {}^{\varphi}\tilde{e}A=0$
as the left action of $e_n$ annihilates ${}^{\varphi}\tilde{e}A$.
Hence we obtain $X\otimes_A {}^{\varphi}\tilde{e}A\cong (X/XI)\otimes_A {}^{\varphi}\tilde{e}A$
so that we can reduce our consideration to the previous case $XI=0$. This proves
claims~\eqref{lemw-2.2} and \eqref{lemw-2.3} in full generality and we are done.
\end{proof}
\vspace{1cm}

\begin{lemma}\label{lemw-3}
Let $X$ be an indecomposable and not  $\Bbbk$-split $A$-$A$-bimodule.

\begin{enumerate}[$($i$)$]
\item\label{lemw-3.1} If $\dim e_nX=2$, then the action graph of the
$A$-$A$-bimodule $S_{12}^{n-2}\otimes_A X$ is obtained from $\Gamma_X$ in the following three steps:
\begin{itemize}
\item first we move $\Gamma_X$ vertically one step up;
\item then we cut off the thick subgraph generated by
all vertices which fall outside the graph  in figure  \eqref{eq0}, we denote the resulting graph $\Gamma$;
\item finally, we add to $\Gamma$ one new vertex and one new arrow as follows: let $v$ be the
south-east corner of $\Gamma$, then we add to $\Gamma$ the immediate south neighbor $w$ of $v$
and the arrow connecting $v$ to $w$.
\end{itemize}
\item\label{lemw-3.2} If $\dim e_n X\neq 2$, then the action graph of the
$A$-$A$-bimodule $S_{12}^{n-2}\otimes_A X$ is obtained from $\Gamma_X$ by
moving the latter vertically one step up and then cutting off the thick subgraph generated by
all vertices which fall outside the graph in figure \eqref{eq0}.
\item\label{lemw-3.3} The action graph of the $A$-$A$-bimodule
$X\otimes_A S_{12}^{n-2}$ is obtained from $\Gamma_X$ by moving the latter
one step to the right and then cutting off all vertices and edges which fall
outside the graph  in figure  \eqref{eq0}.
\end{enumerate}
\end{lemma}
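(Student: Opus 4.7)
The plan is to deduce Lemma~\ref{lemw-3} from Lemma~\ref{lemw-2} by applying the anti-involution $\imath$ of the tensor category $A$-mod-$A$ recalled in Subsection~\ref{snew.3}. Since $\imath$ reverses the order of tensor products, swaps $N_{11}^{n-2}$ with $S_{12}^{n-2}$, fixes $W_{11}^{n-1}$ and $W_{21}^{n-2}$, and preserves the types $W$, $M$, $S$, $N$, and since on action graphs $\imath$ acts as the reflection $(i,j) \mapsto (n+1-j, n+1-i)$ of the grid in \eqref{eq0} while swapping horizontal and vertical arrows (with sources and targets preserved), the statements of Lemma~\ref{lemw-3} should be readable off from those of Lemma~\ref{lemw-2} once the geometric dictionary is set up.

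I would first record this dictionary: moving $\Gamma_X$ one step vertically up corresponds to moving $\Gamma_{\imath(X)}$ one step horizontally to the right; the north-west corner of a subgraph maps to the south-east corner of its image; the immediate west neighbor of a vertex maps to the immediate south neighbor; a thick subgraph maps to a thick subgraph (because $\imath$ preserves the source/target of each arrow); and $\dim_{\Bbbk} Xe_1 = 2$ is equivalent to $\dim_{\Bbbk} e_n\imath(X) = 2$, since the basis vectors of $Xe_1$, sitting at positions $\mathtt{i|1}$, are sent by $\imath$ to basis vectors at positions $\mathtt{n|n+1-i}$, which lie in $e_n\imath(X)$.

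With the dictionary in hand, let $Y$ be an indecomposable, non-$\Bbbk$-split bimodule and set $X := \imath(Y)$, which is also indecomposable and non-$\Bbbk$-split. The isomorphisms
\[
\imath(N_{11}^{n-2} \otimes_A X) \;\cong\; \imath(X) \otimes_A \imath(N_{11}^{n-2}) \;\cong\; Y \otimes_A S_{12}^{n-2},
\]
combined with Lemma~\ref{lemw-2}~\eqref{lemw-2.1}, give the description of $Y \otimes_A S_{12}^{n-2}$ claimed in \eqref{lemw-3.3}. Applying $\imath$ in the same way to Lemma~\ref{lemw-2}~\eqref{lemw-2.2} and~\eqref{lemw-2.3} describes $\imath(X \otimes_A N_{11}^{n-2}) \cong S_{12}^{n-2} \otimes_A Y$: the ``move right, cut thick subgraph, add vertex at the north-west corner with an arrow to its immediate west neighbor'' procedure translates, via the dictionary, into ``move up, cut thick subgraph, add vertex at the south-east corner with an arrow to its immediate south neighbor'', and the hypothesis $\dim_{\Bbbk} Xe_1 = 2$ on $X$ becomes $\dim_{\Bbbk} e_n Y = 2$ on $Y$. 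These are precisely claims \eqref{lemw-3.1} and \eqref{lemw-3.2}.

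The main obstacle is the bookkeeping in the dictionary paragraph, namely the careful verification that every geometric feature appearing in Lemma~\ref{lemw-2} transforms correctly under the reflection $(i,j)\mapsto(n+1-j,n+1-i)$ and the vertical/horizontal swap; once this is done, all the algebraic content is inherited from Lemma~\ref{lemw-2}. An alternative direct approach would be to mimic the proof of Lemma~\ref{lemw-2} step by step using $S_{12}^{n-2} \cong A\tilde{e'}^{\psi}$ together with the isomorphism ${}^{\psi}\tilde{e'}A \otimes_A A\tilde{e'}^{\psi} \cong A/I'$ with $I' = Ae_1$ from Subsection~\ref{snew.3}, and the right exactness of the functors $S_{12}^{n-2}\otimes_A{}_{-}$ and ${}_{-}\otimes_A S_{12}^{n-2}$ (the former being right-exact by Proposition~\ref{prop3.5}~\eqref{prop3.5.2}), but this essentially replays the same reduction-via-short-exact-sequences argument with the sides swapped.
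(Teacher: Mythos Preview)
Your proof is correct and takes a genuinely different route from the paper's own argument. The paper simply observes that $S_{12}^{n-2}\cong A\tilde{e'}^{\psi}$ and says the proof is similar to that of Lemma~\ref{lemw-2}, i.e., one should replay the reduction-via-short-exact-sequences argument with $\psi$ and $\tilde{e'}$ in place of $\varphi$ and $\tilde{e}$ (this is exactly the ``alternative direct approach'' you sketch at the end). You instead invoke the anti-involution $\imath$ and deduce Lemma~\ref{lemw-3} \emph{from} Lemma~\ref{lemw-2} by transporting the statement across $\imath$. This is arguably cleaner: once the geometric dictionary is in place, no new computation is needed, and the symmetry $\imath$ is already recorded in Subsections~\ref{s3.27} and~\ref{snew.3}. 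The paper's approach, by contrast, avoids the bookkeeping of the dictionary at the cost of essentially repeating the proof of Lemma~\ref{lemw-2}.

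Two minor points. First, your assertion that $\imath$ ``preserves the types $W$, $M$, $S$, $N$'' is slightly misleading: $\imath$ fixes types $W$ and $M$ but \emph{swaps} types $S$ and $N$ (as already visible from $\imath(N_{11}^{n-2})\cong S_{12}^{n-2}$). This does not affect your argument, since you only use the specific swap $N_{11}^{n-2}\leftrightarrow S_{12}^{n-2}$ and the geometric dictionary, not any general type-preservation claim. Second, in your final paragraph, the parenthetical citing Proposition~\ref{prop3.5}\eqref{prop3.5.2} for right exactness of $S_{12}^{n-2}\otimes_A{}_-$ is superfluous (tensor functors are always right exact); what matters for the direct approach is that ${}_-\otimes_A S_{12}^{n-2}$ is \emph{exact}, which holds because $S_{12}^{n-2}$ is left projective (it belongs to the list \eqref{eqs}).
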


\begin{proof}
Observing that we have an isomorphism $S_{12}^{n-2}\cong A\tilde{e'}^{\psi}$ of $A$-$A$-bimodules,
the proof is similar to that of Lemma~\ref{lemw-2} and is left to the reader.
\end{proof}

Again, a good example to illustrate the procedure described in Lemma~\ref{lemw-3}~\eqref{lemw-3.1} is
the obvious isomorphism $S_{12}^{n-2}\otimes_A W_{11}^{n-1}\cong S_{12}^{n-2}$ based on the fact that
$W_{11}^{n-1}\cong {}_AA_A$.
In particular, this example shows that the second step of the procedure described in
Lemma~\ref{lemw-3}~\eqref{lemw-3.1} can lead to elimination of some vertices which do not fall outside
the graph  in figure \eqref{eq0}. For $n=4$, the transformation in this example
can be depicted (using the same conventions as in \eqref{eqeqnn1}) explicitly as follows:
\begin{displaymath}
\xymatrix{
&&&\\
\bullet\ar[d]\ar@{--}[ddd]\ar@{--}[rrr]&&&\ar@{--}[ddd]\\
\bullet&\bullet\ar[d]\ar[l]&&\\
&\bullet&\bullet\ar[d]\ar[l]&\\
\ar@{--}[rrr]&&\bullet&\bullet\ar[l]\\
}\qquad\mapsto\qquad
\xymatrix{
\circ\ar@{.>}[d]&&&\\
\ar@{--}[ddd]\circ&\bullet\ar[d]\ar@{.>}[l]\ar@{--}[rr]&&\ar@{--}[dd]&\\
&\bullet&\bullet\ar[d]\ar[l]&&\\
&&\bullet&\bullet\ar@{=>}[d]\ar[l]&\\
\ar@{--}[rrr]&&&\blacktriangle\\
}
\end{displaymath}

\subsection{Proof of Theorem~\ref{thmw-1}}\label{snew.5}

Let $\mathcal{X}$ denote the minimal subcategory of $A$-mod-$A$ containing \eqref{eqw-1}
and closed under isomorphisms, tensor products and taking direct sums and direct summands.
To prove Theorem~\ref{thmw-1}~\eqref{thmw-1.1}, we need to show that all indecomposable
$A$-$A$-bimodules belong to $\mathcal{X}$.

Clearly, $W_{11}^{n-1}\in \mathcal{X}$ as it appears in \eqref{eqw-1}. Further,
$M_{11}^{n-2}\in \mathcal{X}$ as, for example,
\begin{displaymath}
S_{12}^{n-2}\otimes_A ( W_{21}^{n-2}\otimes_{A}N_{11}^{n-2})
\cong S_{12}^{n-2}\otimes_A N_{21}^{n-2}\cong M_{11}^{n-2},
\end{displaymath}
where we used Lemma~\ref{lemw-2}~\eqref{lemw-2.2} for the first
isomorphism and Lemma~\ref{lemw-3}~\eqref{lemw-3.1} for the second one.

Starting from $W_{11}^{n-1}$, $M_{11}^{n-2}$, $S_{12}^{n-2}$ and $N_{11}^{n-2}$, and applying
the manipulations described in Lemma~\ref{lemw-1}~\eqref{lemw-1.1}-\eqref{lemw-1.2},
Lemma~\ref{lemw-2}~\eqref{lemw-2.1} and Lemma~\ref{lemw-3}~\eqref{lemw-3.3}, it is clear that
we can obtain action graphs of all indecomposable $A$-$A$-bimodules which are not isomorphic to
$Ae_i\otimes_{\Bbbk}e_{j+1}A$, where  $i,j\in\mathbb{N}_n^\ast$.
Hence all such bimodules must belong to $\mathcal{X}$, in particular, all
indecomposable string $A$-$A$-bimodules of dimension two are in $\mathcal{X}$.

Finally, tensoring indecomposable string $A$-$A$-bimodules of dimension two with each other,
we can get all bimodules of the form $Ae_i\otimes_{\Bbbk}e_{j+1}A$, where  $i,j\in\mathbb{N}_n^\ast$.
Claim~\eqref{thmw-1.1} of Theorem~\ref{thmw-1} follows.

Now, let us prove claim~\eqref{thmw-1.2}. Let $T$ be a proper subset of~\eqref{eqw-1} and
$\mathcal{X}_T$ the minimal subcategory of $A$-mod-$A$ containing $T$ and closed under
isomorphisms, tensor products and taking direct sums and direct summands. We have to show that
$\mathcal{X}_T$ is a proper subcategory of $A$-mod-$A$. If $W_{11}^{n-1}\not\in T$,
then the latter claim follows directly from Theorem~\ref{thmn-3}~\eqref{thmn-3.2}.

If $W_{21}^{n-2}\not\in T$, then, from Lemmata~\ref{lemw-2} and \ref{lemw-3} it follows
that the only possible manipulations with actions graphs are to move them up or to the
right. As we can only start with action graphs of bimodules from $T$, it follows
that, in particular,  $W_{21}^{n-2}\not\in \mathcal{X}_T$. A similar argument also shows that,
necessarily, either $S_{12}^{n-2}\in T$ or $N_{11}^{n-2}\in T$.

Assume that $T$ is \eqref{eqw-1} without $S_{12}^{n-2}$. Then, from Lemmata~\ref{lemw-1} and \ref{lemw-2},
it follows that all bimodules in $\mathcal{X}_T$ are either $\Bbbk$-split or of type
$W$ or $N$.
Assume that $T$ is \eqref{eqw-1} without $N_{11}^{n-2}$. Then, from Lemmata~\ref{lemw-1} and \ref{lemw-3},
it follows that all bimodules in $\mathcal{X}_T$ are either $\Bbbk$-split or of type
$W$ or $S$. Claim~\eqref{thmw-1.2} follows and the proof of Theorem~\ref{thmw-1}
is complete.

\section{Simple transitive $2$-representations of projective $A$-$A$-bimodules}\label{s4}

\subsection{Finitary $2$-categories and their $2$-representations}\label{s4.1}
In this section we switch from the concrete algebras $A_n$ considered above to
general finite dimensional algebras $A$.

For a finite dimensional $\Bbbk$-algebra $A$, consider the $2$-category $\cC_A$ of projective
endofunctors of $A$-mod, see \cite[Subsection~7.3]{MM1} (we note that this $2$-category depends
on the choice of a small category equivalent to $A$-mod). We assume that $A$ is basic and connected
and let $\varepsilon_1+\varepsilon_2+\dots+\varepsilon_k=1$ be a primitive decomposition of the
identity in $A$. The $2$-category $\cC_A$ has one object $\mathtt{i}$. A complete list of
representatives of isomorphism classes of indecomposable $A$-$A$-bimodules which contribute to
$1$-morphisms in $\cC_A$ consists of the regular bimodule ${}_AA_A$, which corresponds to the
identity $1$-morphism $\mathbbm{1}_{\mathtt{i}}$, and indecomposable projective bimodules $A\varepsilon_i\otimes\varepsilon_jA$, where $i,j=1,2,\dots,k$, which
correspond to indecomposable $1$-morphisms respectively denoted by $\mathrm{F}_{ij}$. Finally,
$2$-morphisms in $\cC_A$ are given by homomorphisms of $A$-$A$-bimodules.

A {\em finitary $2$-representation} of $\cC_A$ is a functorial action, denoted $\mathbf{M}$,
on a category $\mathbf{M}(\mathtt{i})$ equivalent to $B$-proj of projective modules over
some finite dimensional $\Bbbk$-algebra $B$. All such $2$-representations form a $2$-category,
denoted $\cC_A$-afmod, where $1$-morphisms are $2$-natural transformations and $2$-morphisms
are modifications, see \cite{MM3} for details.

A finitary $2$-representation $\mathbf{M}$ is called {\em transitive} if $\mathbf{M}(\mathtt{i})$
has no proper $\cC_A$-invariant, idempotent split and isomorphism closed additive subcategories.
A transitive $2$-representation $\mathbf{M}$ is called {\em simple} if $\mathbf{M}(\mathtt{i})$
has no proper $\cC_A$-invariant ideals, see \cite{MM5,MM6} for details.

Classical examples of simple transitive $2$-representations are so-called {\em cell $2$-rep\-re\-sen\-ta\-ti\-ons}
as defined in \cite{MM1,MM2}. The $2$-category $\cC_A$ has, up to equivalence, two cell $2$-representations:
\begin{itemize}
\item The cell $2$-representation $\mathbf{C}_{\{\mathbbm{1}_{\mathtt{i}}\}}$ which is given as the quotient of the
left regular action of $\cC_A$ on $\cC_A(\mathtt{i},\mathtt{i})$ by the unique maximal
$\cC_A$-invariant left ideal.
\item The cell $2$-representation $\mathbf{C}_{\{\mathrm{F}_{i1}\}}$ which is given 
(up to equivalence) by the defining action of $\cC_A$ on $A$-proj.
\end{itemize}

\subsection{The main result of the section}\label{s4.2}

The main result of this section is the following:

\begin{theorem}\label{thmmain3}
Assume that $A$ has a non-zero projective injective module and
is directed in the sense that $\varepsilon_i A\varepsilon_j=0$ whenever $i<j$
and $\varepsilon_i A\varepsilon_i=\Bbbk \varepsilon_i$, for all $i$.
Then every simple transitive $2$-representation of $\cC_A$ is equivalent to a cell $2$-representation.
\end{theorem}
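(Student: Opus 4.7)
The plan is to fix a simple transitive $2$-representation $\mathbf{M}$ of $\cC_A$ and split the argument according to the apex of $\mathbf{M}$. The $2$-category $\cC_A$ has exactly two two-sided cells, namely $\{\mathbbm{1}_{\mathtt{i}}\}$ and the cell $\mathcal{J}$ consisting of all $\mathrm{F}_{ij}$ (an immediate consequence of $\mathrm{F}_{ij}\circ\mathrm{F}_{kl}\cong\mathrm{F}_{il}^{\oplus\dim\varepsilon_jA\varepsilon_k}$). If the apex is $\{\mathbbm{1}_{\mathtt{i}}\}$, then every $\mathrm{F}_{ij}$ acts as zero on $\mathbf{M}$, so simplicity and transitivity collapse $\mathbf{M}(\mathtt{i})$ to a single indecomposable with one-dimensional endomorphism ring; this is exactly $\mathbf{C}_{\{\mathbbm{1}_{\mathtt{i}}\}}$.

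The substantive case is when the apex equals $\mathcal{J}$. I would pick an index $s$ for which $A\varepsilon_s$ is projective-injective, guaranteed by hypothesis, and exploit two features of $\mathrm{F}_{ss}$. First, directedness forces $\dim\varepsilon_sA\varepsilon_s=1$, so $\mathrm{F}_{ss}\circ\mathrm{F}_{ss}\cong\mathrm{F}_{ss}$, i.e., $\mathrm{F}_{ss}$ is an idempotent $1$-morphism. Second, projective-injectivity of $A\varepsilon_s$ makes the bimodule $A\varepsilon_s\otimes\varepsilon_sA$ left and right projective as well as injective, which produces a convenient self-adjunction for $\mathrm{F}_{ss}$. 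Combining these, I would argue that in the abelianization of $\mathbf{M}$ there is a unique (up to isomorphism) simple object $L$ not annihilated by $\mathrm{F}_{ss}$, and that its projective cover $P$ in $\mathbf{M}(\mathtt{i})$ satisfies $\mathbf{M}(\mathrm{F}_{ss})(P)\cong P$.

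With $P$ chosen, I would construct a $2$-natural transformation $\Phi:\mathbf{C}_{\{\mathrm{F}_{i1}\}}\to\mathbf{M}$ by sending the distinguished projective generator of the cell $2$-representation to $P$. The key computation is to match multiplicities: for each $i,j$, the number of summands of $\mathbf{M}(\mathrm{F}_{ij})(P)$ isomorphic to a given indecomposable in the cell module must equal $\dim\varepsilon_iA\varepsilon_j$. Directedness, the idempotency of $\mathrm{F}_{ss}$, and the apex assumption together reduce this to a finite bookkeeping close in spirit to \cite[Section~3]{MZ}, but organized uniformly via the pivot $s$. Transitivity and simplicity of $\mathbf{M}$ then force $\Phi$ to be essentially surjective and its kernel ideal to be trivial.

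The main obstacle will be the precise comparison of $2$-endomorphism algebras to upgrade $\Phi$ to an equivalence. Concretely, one must establish an algebra isomorphism $\mathrm{End}_{\mathbf{M}(\mathtt{i})}\bigl(\mathbf{M}(\mathrm{F}_{i1})(P)\bigr)\cong\varepsilon_iA\varepsilon_1$ for every $i$. The projective-injective hypothesis is decisive here, supplying the necessary non-zero $2$-morphisms via adjunction pivoted at $\mathrm{F}_{ss}$, while directedness prevents any superfluous ones. This uniform reliance on a single projective-injective index to organize the estimates, rather than the case-by-case analysis for $A_2$ and $A_3$, is the promised streamlining of \cite{MZ} and the general idea we expect to transfer to other finitary $2$-categories.
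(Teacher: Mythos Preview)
Your overall architecture (split by apex, then for the cell $\mathcal{J}$ exploit the projective--injective together with directedness to pin down the action and build a comparison map) matches the paper's. However, two of your key technical claims fail, and one essential ingredient is missing.

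First, the asserted self-adjunction of $\mathrm{F}_{ss}$ is false in general. If $A\varepsilon_s$ is projective--injective, then $A\varepsilon_s\cong\mathrm{Hom}_\Bbbk(\varepsilon_tA,\Bbbk)$ for some $t$, and the adjoint pair one actually gets is $(\mathrm{F}_{qt},\mathrm{F}_{sq})$; so $\mathrm{F}_{ss}$ is self-adjoint only when $s=t$, i.e.\ when the projective--injective is simple. For a directed algebra with $\varepsilon_sA\varepsilon_s=\Bbbk\varepsilon_s$ this forces $A\varepsilon_s$ to be one-dimensional, which is almost never the interesting case (already for $A_2$ one has $s=1$, $t=2$). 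Your claim that $A\varepsilon_s\otimes\varepsilon_sA$ is injective on both sides is likewise wrong: it is left injective because $A\varepsilon_s$ is, but $\varepsilon_sA$ need not be an injective right module. The paper instead uses the genuine adjoint pair $(\mathrm{F}_{qj_0},\mathrm{F}_{i_0q})$ with $i_0\neq j_0$ throughout.

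Second, you never invoke the structural input from \cite[Section~3]{MZ}: under the projective--injective hypothesis, every $\mathbf{M}(\mathrm{F}_{ij})$ is a \emph{projective} endofunctor of $B$-mod, i.e.\ a direct sum of $\mathrm{G}_{st}$'s. Without this you cannot speak of ``which $\mathrm{G}_{st}$ occur'', and there is no combinatorial handle on the action at all. The paper uses this immediately to define index sets $X_i$, $Y_j$ and then proves, via the correct adjunction and directedness, that the $X_i$ partition $\{1,\dots,r\}$ into singletons; this is exactly your ``unique simple $L$ not annihilated by $\mathrm{F}_{ss}$'', and it is the crux of the argument, not something one can simply ``argue''. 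Once $r=k$ and each $\mathrm{F}_{ij}$ acts via $\mathrm{G}_{ij}$, the Cartan matrices of $A$ and $B$ agree, and the comparison map (built by sending $\mathbbm{1}_{\mathtt{i}}$ to a simple, not a projective) factors through the cell $2$-representation automatically---no separate endomorphism-algebra computation is needed.
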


The algebra $A_n$ from Subsection~\ref{s2.1} obviously satisfies both assumption of Theorem~\ref{thmmain3}.
Therefore Theorem~\ref{thmmain3} provides a classification of simple
transitive $2$-representation of $\cC_{A_n}$.
In fact, any quotient of the path algebra of the quiver \eqref{eq1} satisfies both assumption of
Theorem~\ref{thmmain3}. There are of course many other algebras which satisfy these assumptions,
for example incidence algebras of finite posets having the minimum and the maximum element
(for example, the Boolean of a finite set) and many others.
In the cases $A=A_2$ and $A=A_3$, Theorem~\ref{thmmain3} is proved in \cite{MZ}.

The rest of this section is devoted to the proof of Theorem~\ref{thmmain3}.

\subsection{Notational preparation}\label{s4.3}

We let $\mathbf{M}$ be a simple transitive $2$-representation of $\cC_A$ and denote by
$B$ a basic $\Bbbk$-algebra such that $\mathbf{M}(\mathtt{i})$ is equivalent to $B$-proj.
Let $\epsilon_1+\epsilon_2+\dots+\epsilon_r=1$ be a primitive decomposition of the
identity in $B$. For $i,j=1,2,\dots,r$, we denote by $\mathrm{G}_{ij}$ the endofunctor
of $B$-mod given by tensoring with the indecomposable projective $B$-$B$-bimodule
$B\epsilon_i\otimes \epsilon_jB$. We note that $r\neq k$, in general.

Without loss of generality we may assume that $\mathbf{M}$ is faithful. Indeed,
the $2$-category $\cC_A$ is simple, see \cite[Subsection~3.2]{MMZ}. Therefore, if $\mathbf{M}$
is not faithful, then $\mathbf{M}(\mathrm{F}_{ij})=0$, for all $i,j$. The quotient of
$\cC_A$ by the $2$-ideal generated by all $\mathrm{F}_{ij}$ satisfies the assumptions of
\cite[Theorem~18]{MM5} and hence $\mathbf{M}$ is equivalent to a cell $2$-representation by
\cite[Theorem~18]{MM5}.

So, from now on, $\mathbf{M}$ is faithful, in particular, each $\mathbf{M}(\mathrm{F}_{ij})$ is non-zero.
As $A$ has a non-zero projective-injective module, by \cite[Section~3]{MZ}, each
$\mathbf{M}(\mathrm{F}_{ij})$ is a projective endofunctor of $B$-mod, that is, is isomorphic
to a direct sum of some $\mathrm{G}_{st}$, $s,t\in\{1,2,\dots,r\}$, possibly even with multiplicities.
For $i,j=1,2,\dots,k$, we denote by
\begin{itemize}
\item $X_{ij}$ the set of all $s\in\{1,2,\dots,r\}$ such that $\mathrm{G}_{st}$ is isomorphic to a
direct summand of $\mathbf{M}(\mathrm{F}_{ij})$, for some $t\in\{1,2,\dots,r\}$;
\item $Y_{ij}$ the set of all $t\in\{1,2,\dots,r\}$ such that $\mathrm{G}_{st}$ is isomorphic to a
direct summand of $\mathbf{M}(\mathrm{F}_{ij})$, for some $s\in\{1,2,\dots,r\}$.
\end{itemize}
As each $\mathbf{M}(\mathrm{F}_{ij})$ is non-zero, each $X_{ij}$ and each $Y_{ij}$ is not empty.

Recall that $A$ is assumed to have a non-zero projective-injective module. Then there exist
$i_0,j_0\in\{1,2,\dots,k\}$ such that $A\varepsilon_{i_0}\cong\mathrm{Hom}_\Bbbk(\varepsilon_{j_0}A,\Bbbk)$.
In this case, for every $q\in \{1,2,\dots,k\}$, the pair
\begin{equation}\label{eq4.2}
(\mathrm{F}_{qj_0},\mathrm{F}_{i_0q})
\end{equation}
is an adjoint pair of $1$-morphisms, see \cite[Lemma~5]{MZ}.

For $s=1,2,\dots,r$, we denote by $P_s$ the indecomposable projective $B$-module $B\epsilon_s$
and by $L_s$ the simple top of $P_s$. Whenever it does not lead to any confusion, we will use
{\em action notation} and simply write e.g.  $\mathrm{F}_{ij}\, M$, for $M\in \mathbf{M}(\mathtt{i})$
and $M\in B$-mod, instead of $\mathbf{M}(\mathrm{F}_{ij})(M)$.

\subsection{Analysis of the sets $X_{ij}$ and $Y_{ij}$}\label{s4.4}

\begin{lemma}\label{lem4.1}
For $i=1,2,\dots,k$, we have $X_{ij_1}=X_{ij_2}$, for all $j_1,j_2\in\{1,2,\dots,k\}$.
Similarly, for $j=1,2,\dots,k$, we have $Y_{i_1j}=Y_{i_2j}$, for all $i_1,i_2\in\{1,2,\dots,k\}$.
\end{lemma}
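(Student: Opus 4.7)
The plan is to exploit the composition rules among indecomposable projective $1$-morphisms, using the directedness hypothesis $\varepsilon_\ell A\varepsilon_\ell=\Bbbk\varepsilon_\ell$. Since $\mathrm{F}_{ab}$ corresponds to the bimodule $A\varepsilon_a\otimes_\Bbbk\varepsilon_bA$, tensoring over $A$ gives
\begin{displaymath}
\mathrm{F}_{ab}\circ \mathrm{F}_{cd}\;\cong\;\mathrm{F}_{ad}^{\oplus \dim\varepsilon_bA\varepsilon_c},
\end{displaymath}
and, analogously in $B$-mod-$B$,
\begin{displaymath}
\mathrm{G}_{st}\circ \mathrm{G}_{uv}\;\cong\;\mathrm{G}_{sv}^{\oplus \dim\epsilon_tB\epsilon_u}.
\end{displaymath}
Because $\dim\varepsilon_\ell A\varepsilon_\ell=1$ for every $\ell$, the first formula specializes to $\mathrm{F}_{ij_1}\circ \mathrm{F}_{j_1 j_2}\cong\mathrm{F}_{ij_2}$ and symmetrically $\mathrm{F}_{ij_2}\circ \mathrm{F}_{j_2 j_1}\cong\mathrm{F}_{ij_1}$, with no restriction on the relative order of $j_1$ and $j_2$.

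For the statement about $X$, I would write
\begin{displaymath}
\mathbf{M}(\mathrm{F}_{ij})\;\cong\;\bigoplus_{s,t}\mathrm{G}_{st}^{m^{ij}_{st}}
\end{displaymath}
with non-negative integer multiplicities, so that $X_{ij}=\{s\,:\,m^{ij}_{st}>0\text{ for some }t\}$ and $Y_{ij}=\{t\,:\,m^{ij}_{st}>0\text{ for some }s\}$. Applying the $2$-functor $\mathbf{M}$ to $\mathrm{F}_{ij_1}\circ \mathrm{F}_{j_1 j_2}\cong\mathrm{F}_{ij_2}$, decomposing with the $\mathrm{G}$-composition rule, and comparing the multiplicities of $\mathrm{G}_{sv}$ on both sides gives
\begin{displaymath}
m^{ij_2}_{s,v}\;=\;\sum_{t,u}m^{ij_1}_{s,t}\,m^{j_1 j_2}_{u,v}\,\dim\epsilon_tB\epsilon_u.
\end{displaymath}
The key point is that the first index $s$ on the right is contributed only by the leftmost factor $\mathbf{M}(\mathrm{F}_{ij_1})$, so any $s\in X_{ij_2}$ immediately forces $m^{ij_1}_{s,t}>0$ for some $t$, i.e.\ $s\in X_{ij_1}$. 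The symmetric identity $\mathrm{F}_{ij_2}\circ \mathrm{F}_{j_2 j_1}\cong\mathrm{F}_{ij_1}$ gives the reverse inclusion $X_{ij_1}\subseteq X_{ij_2}$, and the two sets coincide.

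The statement about $Y$ is entirely dual: using $\mathrm{F}_{i_2 i_1}\circ \mathrm{F}_{i_1 j}\cong\mathrm{F}_{i_2 j}$ together with its companion $\mathrm{F}_{i_1 i_2}\circ \mathrm{F}_{i_2 j}\cong\mathrm{F}_{i_1 j}$, the analogous multiplicity equation shows that now the \emph{second} index $v$ on the right-hand side is carried only by the rightmost factor $\mathbf{M}(\mathrm{F}_{i_1 j})$, which yields $Y_{i_2 j}\subseteq Y_{i_1 j}$ and, after swapping $i_1$ and $i_2$, equality. There is essentially no obstacle here: the only input that matters is that the middle index in each composition contributes a multiplicity of exactly one, which is precisely what the hypothesis $\varepsilon_\ell A\varepsilon_\ell=\Bbbk\varepsilon_\ell$ supplies. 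Neither the projective-injective hypothesis on $A$, nor the adjunction~\eqref{eq4.2}, nor the faithfulness of $\mathbf{M}$ is needed for this particular lemma; those ingredients are reserved for the later steps that will upgrade the combinatorial data $X_{ij}, Y_{ij}$ into the classification of simple transitive $2$-representations.
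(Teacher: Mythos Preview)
Your proof is correct and follows essentially the same route as the paper: both use the composition $\mathrm{F}_{ij_1}\circ\mathrm{F}_{j_1j_2}$ to force $X_{ij_2}\subseteq X_{ij_1}$ (left indices of summands of a composite can only come from the left factor), then swap $j_1,j_2$ for the reverse inclusion. One small sharpening: the paper observes that only $\dim(\varepsilon_{j_1}A\varepsilon_{j_1})>0$ is needed, not $=1$, so even the directedness hypothesis is unnecessary here---your claim that ``the only input that matters is that the middle index contributes a multiplicity of exactly one'' is slightly too strong, since any positive multiplicity would do, and the paper explicitly notes that directedness is not used until Lemma~\ref{lem4.5}.
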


\begin{proof}
We prove the first claim, the proof of the second one is similar. We have
\begin{displaymath}
\big(A\varepsilon_i\otimes  \varepsilon_{j_1}A\big)\otimes_A
\big(A\varepsilon_{j_1}\otimes  \varepsilon_{j_2}A\big)\cong
A\varepsilon_i\otimes  \varepsilon_{j_2}A ^{\oplus \dim(\varepsilon_{j_1}A\varepsilon_{j_1})}.
\end{displaymath}
Note that $\dim(\varepsilon_{j_1}A\varepsilon_{j_1})>0$. Therefore
$\mathrm{F}_{ij_2}$ is isomorphic to a direct summand of $\mathrm{F}_{ij_1}\circ \mathrm{F}_{j_1j_2}$.
This implies $X_{ij_2}\subset X_{ij_1}$ (as multiplication on the right cannot create new indexing
idempotents on the left). By symmetry, we also have  $X_{ij_1}\subset X_{ij_2}$. The claim follows.
\end{proof}

After Lemma~\ref{lem4.1}, for $i=1,2,\dots,k$, we may denote by $X_i$ the common value of all
$X_{ij}$, where $j\in\{1,2,\dots,k\}$. Similarly, for $j=1,2,\dots,k$, we may denote by $Y_j$
the common value of all $Y_{ij}$, where $i\in\{1,2,\dots,k\}$.

\begin{lemma}\label{lem4.2}
We have $X_1\cup X_2\cup\dots\cup X_k=\{1,2,\dots,r\}$.
\end{lemma}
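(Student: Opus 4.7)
The plan is to use transitivity of $\mathbf{M}$. Set $Z := X_1 \cup X_2 \cup \dots \cup X_k \subseteq \{1, 2, \dots, r\}$ and let $\mathcal{N}$ denote the full additive subcategory of $\mathbf{M}(\mathtt{i}) \simeq B\text{-proj}$ generated (under direct sums, summands, and isomorphism) by $\{P_s : s \in Z\}$. I want to show $\mathcal{N}$ is a non-zero $\cC_A$-stable subcategory of $\mathbf{M}(\mathtt{i})$; transitivity will then force $\mathcal{N} = \mathbf{M}(\mathtt{i})$, i.e.\ $Z = \{1, 2, \dots, r\}$.

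First I would observe that $\mathcal{N} \neq 0$. Since $\mathbf{M}$ is faithful, each $\mathbf{M}(\mathrm{F}_{ij})$ is a non-zero projective endofunctor, so by the definition of $X_{ij}$ and Lemma~\ref{lem4.1} each $X_i$ is non-empty; in particular $Z \neq \varnothing$ and $\mathcal{N}$ contains some $P_s$.

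Next, I would verify that $\mathcal{N}$ is closed under the action of every $1$-morphism of $\cC_A$. The identity $1$-morphism $\mathbbm{1}_{\mathtt{i}}$ acts as the identity, so it obviously preserves $\mathcal{N}$. For $\mathrm{F}_{ij}$, write
\begin{displaymath}
\mathbf{M}(\mathrm{F}_{ij}) \;\cong\; \bigoplus_{(s,t)} \mathrm{G}_{st}^{\oplus m_{st}^{ij}},
\end{displaymath}
where the sum runs over pairs $(s,t)$ with $s \in X_{ij} = X_i$ and $t \in Y_{ij} = Y_j$. For any $u \in \{1,\dots,r\}$ we have $\mathrm{G}_{st}(P_u) = P_s^{\oplus \dim(\epsilon_t B \epsilon_u)}$, so $\mathbf{M}(\mathrm{F}_{ij})(P_u)$ is a direct sum of copies of various $P_s$ with $s \in X_i \subseteq Z$, hence lies in $\mathcal{N}$. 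In particular, for every $P_s \in \mathcal{N}$ and every $1$-morphism $\mathrm{F}$ of $\cC_A$, the object $\mathbf{M}(\mathrm{F})(P_s)$ is again in $\mathcal{N}$, and closure under direct sums extends this to all of $\mathcal{N}$.

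Finally, $\mathcal{N}$ is a non-zero, isomorphism-closed, idempotent-split additive subcategory of $\mathbf{M}(\mathtt{i})$ that is stable under the $\cC_A$-action. Transitivity of $\mathbf{M}$ therefore forces $\mathcal{N} = \mathbf{M}(\mathtt{i})$, whence every $P_s$ belongs to $\mathcal{N}$ and thus $Z = \{1,2,\dots,r\}$. I do not anticipate a serious obstacle here; the only point requiring a little care is the bookkeeping that the indices of the indecomposable summands of $\mathbf{M}(\mathrm{F}_{ij})(P_u)$ depend only on $i$ (via $X_i$) and not on $u$, which is precisely what Lemma~\ref{lem4.1} provides.
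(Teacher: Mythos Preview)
Your proposal is correct and takes essentially the same approach as the paper: the paper simply observes that $X_1\cup\dots\cup X_k$ indexes precisely the indecomposable projectives reachable via the $\cC_A$-action and invokes transitivity, while you spell out this observation in full detail by exhibiting the invariant subcategory $\mathcal{N}$. One small remark: the fact that the first indices of summands of $\mathbf{M}(\mathrm{F}_{ij})(P_u)$ do not depend on $u$ comes directly from the shape of $\mathrm{G}_{st}(P_u)\cong P_s^{\oplus\dim(\epsilon_tB\epsilon_u)}$, not from Lemma~\ref{lem4.1}; the latter is only needed so that $X_{ij}=X_i$ and hence $Z$ is well-defined.
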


\begin{proof}
The set $X_1\cup X_2\cup\dots\cup X_k$ indexes those projectives that can be obtained using
the action of $\cC_A$. Therefore the claim follows immediately from transitivity of $\mathbf{M}$.
\end{proof}

\begin{lemma}\label{lem4.3}
For every $q=1,2,\dots,k$, we have  $X_q=Y_q$.
\end{lemma}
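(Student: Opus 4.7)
The plan is to give intrinsic descriptions of the sets $X_{qj_0}$ and $Y_{i_0q}$ directly in terms of how the projective functors $F := \mathbf{M}(\mathrm{F}_{qj_0})$ and $G := \mathbf{M}(\mathrm{F}_{i_0q})$ act on indecomposable projective and simple $B$-modules, and then to read off the equality $X_q = Y_q$ from the adjunction \eqref{eq4.2}. First, I would record the elementary identity
\begin{displaymath}
\mathrm{G}_{st}(X) = (B\epsilon_s \otimes_{\Bbbk} \epsilon_t B)\otimes_B X \cong P_s^{\oplus \dim_{\Bbbk}\epsilon_t X},
\end{displaymath}
valid for any $X \in B\text{-mod}$. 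From this one sees that, if a projective endofunctor of $B\text{-mod}$ decomposes as $\bigoplus_{s,t}\mathrm{G}_{st}^{\oplus a_{st}}$, then the set of ``left indices'' (those $s$ with $a_{st} > 0$ for some $t$) coincides with the set of $s$ such that $P_s$ is a direct summand of the image of some $P_v$, while the set of ``right indices'' (those $t$ with $a_{st} > 0$ for some $s$) coincides with the set of $t$ for which the functor does not annihilate $L_t$. Applied to $F$, and combined with Lemma~\ref{lem4.1}, this identifies $X_q = X_{qj_0}$ with the former set for $F$; applied to $G$, it identifies $Y_q = Y_{i_0q}$ with the latter set for $G$.

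Next, by \eqref{eq4.2} the pair $(\mathrm{F}_{qj_0},\mathrm{F}_{i_0q})$ is an adjoint pair of $1$-morphisms in $\cC_A$; since $\mathbf{M}$ is a $2$-functor, it sends this to an adjoint pair $(F, G)$ of endofunctors of $B\text{-mod}$. Combining this with the intrinsic descriptions above yields
\begin{align*}
s \in X_q
&\iff P_s\text{ is a summand of }F(P_v)\text{ for some }v \\
&\iff \mathrm{Hom}_B(F(P_v), L_s) \neq 0\text{ for some }v \\
&\iff \mathrm{Hom}_B(P_v, G(L_s)) \neq 0\text{ for some }v \\
&\iff G(L_s) \neq 0 \\
&\iff s \in Y_q,
\end{align*}
which proves the claim.

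There is no serious technical obstacle here: the only substantive input is the intrinsic reformulation of the index sets $X_{ij}$ and $Y_{ij}$ via the action on indecomposable projectives and simples, after which the equality $X_q = Y_q$ follows formally by applying the $\mathrm{Hom}$-adjunction to projectives and simples. The mild care point is the observation $\mathrm{G}_{st}(L_u) = \delta_{tu} P_s$, which is what isolates the column index $t$ from the action on simples and makes the right-hand characterisation work.
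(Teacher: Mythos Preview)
Your proof is correct and follows essentially the same approach as the paper: both characterize $X_q$ via the indecomposable projectives appearing in $\mathbf{M}(\mathrm{F}_{qj_0})$ applied to projectives and $Y_q$ via the simples not annihilated by $\mathbf{M}(\mathrm{F}_{i_0q})$, and then connect these two descriptions using the adjunction \eqref{eq4.2}. The only cosmetic difference is that the paper applies the adjunction once to the regular module $B$ (so that $\mathrm{Hom}_B(\mathrm{F}_{qj_0}\,B,L_s)\cong\mathrm{Hom}_B(B,\mathrm{F}_{i_0q}\,L_s)$), whereas you range over the individual $P_v$; since $B=\bigoplus_v P_v$, the two formulations are equivalent.
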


\begin{proof}
Consider the pair $(\mathrm{F}_{qj_0},\mathrm{F}_{i_0q})$ of adjoint $1$-morphisms given by \eqref{eq4.2}.
By adjunction, for $s\in\{1,2,\dots,r\}$, we have
\begin{displaymath}
\mathrm{Hom}_{B}(\mathrm{F}_{qj_0}\, B, L_s)\cong \mathrm{Hom}_{B}(B, \mathrm{F}_{i_0q}\, L_s),
\end{displaymath}
in particular, the left hand side is non-zero if and only if the right hand side is non-zero.
At the same time, the left hand side is non-zero if and only if $P_s$ appears in
$\mathrm{F}_{qj_0}\, B$, that is, $s\in X_q$; while the right hand side is non-zero
if and only if $\mathrm{F}_{i_0q}\, L_s\neq 0$, that is, $s\in Y_q$. The claim follows.
\end{proof}

As an immediate consequence of Lemmata~\ref{lem4.2} and \ref{lem4.3}, we have:

\begin{corollary}\label{cor4.4}
We have $Y_1\cup Y_2\cup\dots\cup Y_k=\{1,2,\dots,r\}$.
\end{corollary}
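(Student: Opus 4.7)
The statement is an immediate corollary combining the two preceding lemmas, so the plan reduces to a one-line substitution together with a remark on what it buys us. First I would apply Lemma~\ref{lem4.3}, which gives $Y_q = X_q$ for every $q \in \{1, 2, \dots, k\}$. Taking the union over $q$ yields
\begin{equation*}
Y_1 \cup Y_2 \cup \dots \cup Y_k \;=\; X_1 \cup X_2 \cup \dots \cup X_k.
\end{equation*}
Then I would invoke Lemma~\ref{lem4.2} to identify the right-hand side with $\{1,2,\dots,r\}$, completing the argument.

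There is essentially no obstacle here: the content of the corollary lies in the two lemmata above. What is worth observing, though, is the \emph{asymmetry} being overcome. Lemma~\ref{lem4.2} is almost tautological from transitivity (every projective of $B$ must appear in some orbit), but the corresponding statement for the $Y_q$ sets would not be directly transparent from transitivity alone, since $Y_q$ records top idempotents of images rather than ranges. What makes the passage work is the adjunction \eqref{eq4.2}, exploited through Lemma~\ref{lem4.3}; in other words, the whole point of having assumed that $A$ admits a non-zero projective-injective module, and hence an adjunction of the form $(\mathrm{F}_{qj_0},\mathrm{F}_{i_0q})$, is precisely to enforce the symmetry $X_q=Y_q$ that collapses this asymmetry.

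So my proposal is simply to chain the two previous results, and to flag in passing that Corollary~\ref{cor4.4} should be read as the ``dual-side'' version of Lemma~\ref{lem4.2}, made available by the presence of biadjunctions coming from the projective-injective hypothesis. This will be used later in the proof of Theorem~\ref{thmmain3} to analyze how simple $B$-modules are permuted by the action of the $\mathrm{F}_{ij}$'s from both sides simultaneously.
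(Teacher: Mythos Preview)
Your proposal is correct and matches the paper's approach exactly: the paper states the corollary as an immediate consequence of Lemmata~\ref{lem4.2} and~\ref{lem4.3} without further proof, which is precisely the chaining you describe. Your additional commentary on the role of the adjunction is accurate and helpful, though note that \eqref{eq4.2} gives an adjunction rather than a biadjunction.
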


Note that, until this point, we never used that $A$ is directed. It will, however, be
crucial for the following two lemmata.

\begin{lemma}\label{lem4.5}
For $i_1\neq i_2\in \{1,2,\dots,k\}$, we have $X_{i_1}\cap X_{i_2}=\varnothing$.
\end{lemma}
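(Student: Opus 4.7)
The plan is to combine the adjunction $(F_{ij_0}, F_{i_0i})$ with the directedness of $A$ to force a contradiction whenever an index $s$ lies in two different $X_i$'s. The key observation I would aim for is a ``diagonal'' statement: if $s\in X_i$, then $\mathrm{G}_{ss}$ must appear as a summand of $\mathbf{M}(\mathrm{F}_{ii})$.

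To establish this diagonal statement, I would start from the adjunction $(\mathrm{F}_{ij_0},\mathrm{F}_{i_0i})$ provided by \eqref{eq4.2} and compute $\mathrm{F}_{ij_0}\circ \mathrm{F}_{i_0i}\cong \mathrm{F}_{ii}^{\oplus d}$, where $d=\dim\varepsilon_{j_0}A\varepsilon_{i_0}\geq 1$ (this dimension is non-zero because the projective-injective $A\varepsilon_{i_0}$ has socle $L_{j_0}$, so $j_0\geq i_0$ in the directed order). The counit of the adjunction thus gives a natural transformation $\mathbf{M}(\mathrm{F}_{ii})^{\oplus d}\to \mathrm{Id}$. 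Evaluated at a simple $L_s$, the standard adjunction bijection identifies this counit with $\mathrm{id}_{\mathbf{M}(\mathrm{F}_{i_0i})L_s}$, which is non-zero precisely when $s\in Y_i$; by Lemma~\ref{lem4.3} this is equivalent to $s\in X_i$. Since $L_s$ is simple, any non-zero map into it is a surjection, so $L_s$ appears in the top of $\mathbf{M}(\mathrm{F}_{ii})L_s=\bigoplus_u P_u^{m_{us}}$, forcing $m_{ss}>0$, i.e.\ $\mathrm{G}_{ss}\mid \mathbf{M}(\mathrm{F}_{ii})$.

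With this diagonal statement in hand, suppose for contradiction that $s\in X_{i_1}\cap X_{i_2}$ with $i_1\neq i_2$; without loss of generality assume $i_1<i_2$. Then $\mathrm{G}_{ss}$ is a summand of both $\mathbf{M}(\mathrm{F}_{i_1i_1})$ and $\mathbf{M}(\mathrm{F}_{i_2i_2})$, so $\mathrm{G}_{ss}\circ \mathrm{G}_{ss}=\mathrm{G}_{ss}^{\oplus \dim\epsilon_sB\epsilon_s}$ appears as a summand of $\mathbf{M}(\mathrm{F}_{i_1i_1})\circ \mathbf{M}(\mathrm{F}_{i_2i_2})\cong \mathbf{M}(\mathrm{F}_{i_1i_1}\circ \mathrm{F}_{i_2i_2})$. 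Since $\dim\epsilon_sB\epsilon_s\geq 1$, this summand is non-zero. But directedness yields $\mathrm{F}_{i_1i_1}\circ \mathrm{F}_{i_2i_2}\cong \mathrm{F}_{i_1i_2}^{\oplus\dim\varepsilon_{i_1}A\varepsilon_{i_2}}=0$ because $\varepsilon_{i_1}A\varepsilon_{i_2}=0$ for $i_1<i_2$, so $\mathbf{M}(\mathrm{F}_{i_1i_1}\circ \mathrm{F}_{i_2i_2})=0$, a contradiction.

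The main obstacle I anticipate is the careful verification that the counit is genuinely non-zero at $L_s$ for $s\in X_i$; this relies on the explicit description of the adjunction isomorphism and the fact that projective endofunctors (as tensor products with projective bimodules) are exact, so both $\mathrm{F}_{ij_0}$ and $\mathrm{F}_{i_0i}$ send non-zero modules to non-zero modules whenever the relevant idempotents are in their supports. Once the diagonal statement is established, the final directedness argument is essentially a one-line computation.
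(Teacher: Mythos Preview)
Your argument is correct, but it takes a considerably longer route than the paper's. The paper never proves your ``diagonal'' statement here; instead it observes directly from the definitions that if $s\in X_{i_1}\cap X_{i_2}$ then, by Lemma~\ref{lem4.3}, also $s\in Y_{i_1}\cap Y_{i_2}$, so $\mathbf{M}(\mathrm{F}_{i_2i_1})$ has a summand $\mathrm{G}_{sy}$ (from $s\in X_{i_2}=X_{i_2i_1}$) and a summand $\mathrm{G}_{xs}$ (from $s\in Y_{i_1}=Y_{i_2i_1}$). Then $\mathrm{G}_{xs}\circ\mathrm{G}_{sy}\neq 0$ is a summand of $\mathbf{M}(\mathrm{F}_{i_2i_1}\circ\mathrm{F}_{i_2i_1})$, while $\mathrm{F}_{i_2i_1}\circ\mathrm{F}_{i_2i_1}=0$ by directedness --- the same contradiction you reach, but via the single off-diagonal functor $\mathrm{F}_{i_2i_1}$ rather than the pair $\mathrm{F}_{i_1i_1},\mathrm{F}_{i_2i_2}$.

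What your approach buys is an independent proof, via the counit, that $\mathrm{G}_{ss}$ occurs in $\mathbf{M}(\mathrm{F}_{ii})$ whenever $s\in X_i$. The paper does need a statement of this flavour, but only in the next lemma (Lemma~\ref{lem4.6}), and there it is obtained by a different mechanism --- transitivity of $\mathbf{M}$ forces every $\mathrm{G}_{s_ys_q}$ with $s_y,s_q\in X_i$ to appear in $\mathbf{M}(\mathrm{F}_{ii})$. So your counit argument is a genuine alternative to that transitivity step; it is just not needed for Lemma~\ref{lem4.5} itself, where the paper's two-line argument using only the definitions of $X_i,Y_i$ and Lemma~\ref{lem4.3} suffices.
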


\begin{proof}
Let $s\in X_{i_1}\cap X_{i_2}$. Then $s\in Y_{i_1}\cap Y_{i_2}$ by Lemma~\ref{lem4.3}.
Without loss of generality we may assume $i_1<i_2$.
Then we have that $\mathbf{M}(\mathrm{F}_{i_2i_1})\circ \mathbf{M}(\mathrm{F}_{i_2i_1})$ contains
a direct summand isomorphic to $\mathrm{G}_{xs}\circ \mathrm{G}_{sy}$, for some
$x,y\in\{1,2,\dots,r\}$. Note that $\mathrm{G}_{xs}\circ \mathrm{G}_{sy}\neq 0$
as $\epsilon_sB\epsilon_s\neq 0$. Consequently, we obtain
\begin{displaymath}
\mathbf{M}(\mathrm{F}_{i_2i_1}\circ \mathrm{F}_{i_2i_1})\cong
\mathbf{M}(\mathrm{F}_{i_2i_1})\circ \mathbf{M}(\mathrm{F}_{i_2i_1})\neq 0.
\end{displaymath}
At the same time, we have $\mathrm{F}_{i_2i_1}\circ \mathrm{F}_{i_2i_1}=0$ since
$\varepsilon_{i_1}A\varepsilon_{i_2}=0$ by our assumption that $A$ is directed,
a contradiction. The claim follows.
\end{proof}

\begin{lemma}\label{lem4.6}
For $i=1,2,\dots,k$, we have $|X_i|=1$.
\end{lemma}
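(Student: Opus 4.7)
The plan is to combine the ring identity $\mathrm{F}_{ij}\circ\mathrm{F}_{jl}\cong\mathrm{F}_{il}$ in $\cC_A$ — which holds because the directedness assumption gives $\dim\varepsilon_j A\varepsilon_j=1$ — with the idempotency $\mathrm{F}_{ii}\circ\mathrm{F}_{ii}\cong\mathrm{F}_{ii}$ and the transitivity of $\mathbf{M}$ to force each block $X_i$ to be a singleton. I would track multiplicities via the matrices $M^{(i,j)}:=(m(i,j,a,v))_{a\in X_i,\,v\in X_j}$, where $\mathbf{M}(\mathrm{F}_{ij})\cong\bigoplus_{s,t}\mathrm{G}_{st}^{m(i,j,s,t)}$; by Lemma~\ref{lem4.3} and the definitions of $X_{ii}$ and $Y_{ii}$, every row and column of $M^{(i,i)}$ is nonzero.

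Applying $\mathbf{M}$ to $\mathrm{F}_{ij}\circ\mathrm{F}_{jl}\cong\mathrm{F}_{il}$, evaluating on a simple $L_v$ with $v\in X_l$, and comparing multiplicities of each indecomposable projective $P_a$ in the output yields
\begin{equation}\label{eqMCM-plan2}
M^{(i,l)}=M^{(i,j)}\,C^{(j)}\,M^{(j,l)},
\end{equation}
where $C^{(j)}$ is the principal $X_j\times X_j$ submatrix of the Cartan matrix of $B$. From $\mathrm{F}_{ii}\circ\mathrm{F}_{ii}\cong\mathrm{F}_{ii}$, the functor $\mathbf{M}(\mathrm{F}_{ii})$ is idempotent on $\mathbf{M}(\mathtt{i})$, and its essential image $\mathcal{I}_i$ is a Karoubian subcategory on which $\mathbf{M}(\mathrm{F}_{ii})$ is isomorphic to the identity. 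For each $s\in X_i$, the $s$th row of $M^{(i,i)}$ is nonzero, so there is some $b\in X_i$ with $m(i,i,s,b)>0$; then $P_s$ occurs as a summand of $\mathbf{M}(\mathrm{F}_{ii})P_b$ (with multiplicity at least $m(i,i,s,b)\cdot C^{(i)}_{b,b}\geq 1$), so $P_s\in\mathcal{I}_i$ and consequently $\mathbf{M}(\mathrm{F}_{ii})P_s\cong P_s$. Matching multiplicities in this isomorphism gives $M^{(i,i)}C^{(i)}=I_{|X_i|}$, and together with $C^{(i)}_{ss}\geq 1$ (since $\epsilon_s\in\epsilon_sB\epsilon_s$) and the non-negativity of both matrices this forces $M^{(i,i)}=C^{(i)}=I_{|X_i|}$. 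Plugging back into \eqref{eqMCM-plan2} gives $M^{(i,j)}M^{(j,i)}=I$ for all $i,j$, so each $M^{(i,j)}$ is a permutation matrix encoding a bijection $\tau_{ij}:X_j\to X_i$ with $\tau_{ji}=\tau_{ij}^{-1}$.

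Finally, fix $s_1\in X_i$ and set $\mathcal{X}_{s_1}:=\mathrm{add}\{P_{\tau_{ji}(s_1)}:j=1,\ldots,k\}\subseteq\mathbf{M}(\mathtt{i})$. Since $M^{(j,i)}$ is a permutation matrix, $\mathbf{M}(\mathrm{F}_{ji})(L_{s_1})=P_{\tau_{ji}(s_1)}$ is a single indecomposable projective; combining this with the ring relation $\mathrm{F}_{ab}\circ\mathrm{F}_{ji}=\mathrm{F}_{ai}^{\oplus\dim\varepsilon_b A\varepsilon_j}$ in $\cC_A$ and the 2-functoriality of $\mathbf{M}$ yields
\[
\mathbf{M}(\mathrm{F}_{ab})(P_{\tau_{ji}(s_1)})=\mathbf{M}(\mathrm{F}_{ab}\circ\mathrm{F}_{ji})(L_{s_1})=P_{\tau_{ai}(s_1)}^{\oplus\dim\varepsilon_b A\varepsilon_j}\in\mathcal{X}_{s_1}.
\]
Hence $\mathcal{X}_{s_1}$ is a $\cC_A$-invariant, full additive, iso-closed, idempotent-split subcategory of $\mathbf{M}(\mathtt{i})$; by transitivity of $\mathbf{M}$, $\mathcal{X}_{s_1}=\mathbf{M}(\mathtt{i})$, so every indecomposable projective is among the $P_{\tau_{ji}(s_1)}$, giving $r\leq k$. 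Combined with $r=\sum_i|X_i|$ and $|X_i|\geq 1$ from Lemmas~\ref{lem4.2} and~\ref{lem4.5}, this forces $|X_i|=1$ for all $i$. The main obstacle is the identification $\mathbf{M}(\mathrm{F}_{ii})P_s\cong P_s$ for $s\in X_i$: this step requires carefully exploiting the Karoubian structure of $\mathbf{M}(\mathtt{i})=B\text{-proj}$ together with the idempotency of $\mathbf{M}(\mathrm{F}_{ii})$, since it is what upgrades the formal matrix identity $M^{(i,i)}C^{(i)}M^{(i,i)}=M^{(i,i)}$ into the stronger relation $M^{(i,i)}C^{(i)}=I$.
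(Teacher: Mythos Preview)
Your approach is substantially more elaborate than the paper's, and the step you yourself flag as the ``main obstacle'' is a genuine gap. You assert that the essential image $\mathcal{I}_i$ of $E:=\mathbf{M}(\mathrm{F}_{ii})$ is a Karoubian subcategory on which $E$ acts as the identity, and then conclude $E(P_s)\cong P_s$ because $P_s$ is a summand of some $E(P_b)$. But these two properties are not simultaneously available: the strict essential image of an idempotent additive endofunctor need not be closed under summands, and its Karoubian closure need not be fixed by $E$. Concretely, take $|X_i|=2$, $C^{(i)}=I$, and $M^{(i,i)}=\left(\begin{smallmatrix}1&0\\1&0\end{smallmatrix}\right)$; then $E=\mathrm{G}_{11}\oplus\mathrm{G}_{21}$ satisfies $E^2\cong E$, and $P_1$ is a summand of $E(P_1)=P_1\oplus P_2$, yet $E(P_1)\not\cong P_1$ and $M^{(i,i)}C^{(i)}\neq I$. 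This example is excluded in your setting because the second column of $M^{(i,i)}$ is zero, contradicting $Y_i=X_i$ from Lemma~\ref{lem4.3}; but your argument for $E(P_s)\cong P_s$ invokes only the \emph{row} condition, so it does not rule this out. The gap can be filled by a purely matrix-theoretic argument: set $N:=M^{(i,i)}C^{(i)}$, observe $N^2=N$ with $N$ a non-negative integer matrix having no zero rows and no zero columns (the latter from the column condition on $M^{(i,i)}$), and deduce $N=I$ by analysing row and column sums. That is not the ``Karoubian structure plus idempotency'' reasoning you describe.

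By contrast, the paper's proof is a two-line argument that avoids all of this machinery. It first uses transitivity directly: for a fixed $s_q\in X_i$, the additive closure of $\{\mathrm{F}_{xi}L_{s_q}:x=1,\dots,k\}$ is $\cC_A$-invariant and nonzero, hence all of $B$-proj, so \emph{every} $\mathrm{G}_{s_ys_q}$ with $s_y\in X_i$ already occurs in $\mathbf{M}(\mathrm{F}_{ii})$. Then idempotency $\mathbf{M}(\mathrm{F}_{ii})^2\cong\mathbf{M}(\mathrm{F}_{ii})$ forces the multiplicity of $\mathrm{G}_{s_1s_1}$ to be $1$; if $|X_i|>1$, the extra copy of $\mathrm{G}_{s_1s_1}$ inside $\mathrm{G}_{s_1s_2}\circ\mathrm{G}_{s_2s_1}$ gives an immediate contradiction. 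Your matrix formalism, the essential-image detour, the deduction that all $M^{(i,j)}$ are permutation matrices, and the second transitivity argument with $\mathcal{X}_{s_1}$ are all unnecessary once transitivity is applied at the outset.
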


\begin{proof}
Let $X_i=\{s_1,s_2,\dots,s_m\}$. For a fixed $s_q$, the additive closure of
all $\mathrm{F}_{xi}L_{s_q}$, where $x\in\{1,2,\dots,n\}$, is a non-zero
$\cC_A$-invariant subcategory of $B$-proj and hence must coincide with
$B$-proj by transitivity of $\mathbf{M}$. This implies that all
$\mathrm{G}_{s_ys_q}$, where $y\in\{1,2,\dots,m\}$, do appear as direct summands
of $\mathbf{M}(\mathrm{F}_{ii})$.

As $A$ is directed, we have $\varepsilon_i A\varepsilon_i=\Bbbk \varepsilon_i$
and hence $\mathrm{F}_{ii}\circ \mathrm{F}_{ii}\cong \mathrm{F}_{ii}$. Therefore
\begin{equation}\label{eq4.3}
\mathbf{M}(\mathrm{F}_{ii})\circ \mathbf{M}(\mathrm{F}_{ii})\cong \mathbf{M}(\mathrm{F}_{ii})
\end{equation}
as well. Let $h$ denote the multiplicity of $\mathrm{G}_{s_1s_1}$ in $\mathbf{M}(\mathrm{F}_{ii})$.
From the previous paragraph we know that $h>0$. Clearly, $\mathrm{G}_{s_1s_1}$ appears as
a direct summand of $\mathrm{G}_{s_1s_1}\circ \mathrm{G}_{s_1s_1}$. Therefore \eqref{eq4.3}
implies $h^2\leq h$, that is $h=1$.

If $m>1$, then $\mathrm{G}_{s_1s_1}$ appears as
a direct summand of $\mathrm{G}_{s_1s_2}\circ \mathrm{G}_{s_2s_1}$. Therefore in this case
\eqref{eq4.3} fails and we are done.
\end{proof}

As a consequence of Lemmata~\ref{lem4.2} and \ref{lem4.6}, we have $k=r$.
Without loss of generality we may now choose $\epsilon_i$'s such that each
$\mathrm{F}_{ij}$ acts via $\mathrm{G}_{ij}$.

\begin{corollary}\label{cor4.7}
For $i,j=1,2,\dots,k$, we have $\dim(\varepsilon_i A\varepsilon_j)=\dim(\epsilon_i B\epsilon_j)$.
\end{corollary}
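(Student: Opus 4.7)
The plan is to exploit the composition formula for indecomposable projective bimodules on both the $A$-side and the $B$-side, and then use the identification $\mathbf{M}(\mathrm{F}_{ij})\cong \mathrm{G}_{ij}$ (which we arranged right before the corollary's statement) to compare multiplicities via Krull--Schmidt. This is the natural ``read off the structure constants'' argument, and no new ideas beyond Lemma~\ref{lem4.6} should be needed.

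First I would record the routine bimodule isomorphisms
\begin{displaymath}
(A\varepsilon_a\otimes\varepsilon_bA)\otimes_A(A\varepsilon_c\otimes\varepsilon_dA)
\cong (A\varepsilon_a\otimes\varepsilon_dA)^{\oplus\dim(\varepsilon_bA\varepsilon_c)}
\end{displaymath}
and its $B$-counterpart, which translate into the composition formulae $\mathrm{F}_{ab}\circ\mathrm{F}_{cd}\cong\mathrm{F}_{ad}^{\oplus\dim(\varepsilon_bA\varepsilon_c)}$ and $\mathrm{G}_{ab}\circ\mathrm{G}_{cd}\cong\mathrm{G}_{ad}^{\oplus\dim(\epsilon_bB\epsilon_c)}$.

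Next I would apply the $2$-functor $\mathbf{M}$ to the first formula. Using that $\mathbf{M}(\mathrm{F}_{xy})\cong\mathrm{G}_{xy}$ for all $x,y$, and that $\mathbf{M}$ is additive, this yields
\begin{displaymath}
\mathrm{G}_{ab}\circ\mathrm{G}_{cd}\cong \mathrm{G}_{ad}^{\oplus\dim(\varepsilon_bA\varepsilon_c)}.
\end{displaymath}
Comparing with the $B$-side composition formula gives an isomorphism
\begin{displaymath}
\mathrm{G}_{ad}^{\oplus\dim(\varepsilon_bA\varepsilon_c)}\cong \mathrm{G}_{ad}^{\oplus\dim(\epsilon_bB\epsilon_c)}.
\end{displaymath}
Since $\mathrm{G}_{ad}$ is a non-zero functor (it corresponds to the non-zero projective $B$-$B$-bimodule $B\epsilon_a\otimes\epsilon_dB$), Krull--Schmidt in the category of projective endofunctors of $B\text{-mod}$ lets me cancel to obtain $\dim(\varepsilon_bA\varepsilon_c)=\dim(\epsilon_bB\epsilon_c)$. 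Specialising $b=i$, $c=j$ gives the claim.

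The whole argument is essentially bookkeeping once Lemma~\ref{lem4.6} has been established; the only point that requires a moment's thought is making sure one can apply Krull--Schmidt, which is immediate since the $\mathrm{G}_{ad}$ are non-zero indecomposable projective functors. I do not anticipate a genuine obstacle here.
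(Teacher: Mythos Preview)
Your argument is correct and is essentially identical to the paper's own proof: both compare the composition formulae $\mathrm{F}_{si}\circ\mathrm{F}_{jt}\cong\mathrm{F}_{st}^{\oplus\dim(\varepsilon_iA\varepsilon_j)}$ and $\mathrm{G}_{si}\circ\mathrm{G}_{jt}\cong\mathrm{G}_{st}^{\oplus\dim(\epsilon_iB\epsilon_j)}$ after applying $\mathbf{M}$, and read off the equality of multiplicities. The only cosmetic difference is your choice of index names and your explicit mention of Krull--Schmidt, which the paper leaves implicit.
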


\begin{proof}
As each $\mathrm{F}_{st}$ acts via $\mathrm{G}_{st}$, the claim follows by comparing
\begin{displaymath}
\mathrm{F}_{si}\circ \mathrm{F}_{jt}\cong \mathrm{F}_{st}^{\oplus \dim(\varepsilon_i A\varepsilon_j)}
\quad\text{ and }\quad
\mathrm{G}_{si}\circ \mathrm{G}_{jt}\cong \mathrm{G}_{st}^{\oplus \dim(\epsilon_i B\epsilon_j)}.
\end{displaymath}
\end{proof}

\subsection{Completing the proof of Theorem~\ref{thmmain3}}\label{s4.5}

After the preparation in Subsection~\ref{s4.4}, the proof of Theorem~\ref{thmmain3} is similar to
the arguments in \cite[Section~6]{MZ} or \cite[Subsection~4.9]{MaMa}. Consider the principal
$2$-representation $\mathbf{P}_{\mathtt{i}}:=\cC_A(\mathtt{i},{}_-)$ of $\cC_A$, that is the
left regular action of $\cC_A$ on $\cC_A(\mathtt{i},\mathtt{i})$. The additive closure of all
$\mathrm{F}_{i1}$, where $i=1,2,\dots,k$, in $\cC_A(\mathtt{i},\mathtt{i})$ is $\cC_A$-invariant
and gives a $2$-representation which we denote by $\mathbf{N}$. The latter has a unique
$\cC_A$-invariant left ideal $\mathbf{I}$ and the corresponding quotient is exactly the
cell $2$-representation, see \cite[Subsection~6.5]{MM2}.

Mapping $\mathbbm{1}_{\mathtt{i}}$ to $L_1\in B$-mod, gives rise to a $2$-natural transformation $\Phi$ from
$\mathbf{N}$ to $\mathbf{M}$ which, because of the results in Subsection~\ref{s4.4}, sends
indecomposable objects to indecomposable objects. Since, by Corollary~\ref{cor4.7}, the Cartan
matrices of $A$ and $B$ coincide, it follows that $\Phi$ must annihilate $\mathbf{I}$ and
hence induce an equivalence between the cell $2$-representation $\mathbf{N}/\mathbf{I}$
and $\mathbf{M}$. This completes the proof.

\vspace{5mm}

\noindent
V.~M.: Department of Mathematics, Uppsala University, Box. 480,
SE-75106, Uppsala, SWEDEN, email: {\tt mazor\symbol{64}math.uu.se}

\noindent
X.~Z.: Department of Mathematics, Uppsala University, Box. 480,
SE-75106, Uppsala, SWEDEN, email: {\tt xiaoting.zhang\symbol{64}math.uu.se}

\end{document}